\providecommand{\tabularnewline}{\\}
\numberwithin{equation}{section}
\numberwithin{figure}{section}
\theoremstyle{plain}
\newtheorem{thm}{\protect\theoremname}
\theoremstyle{plain}
\newtheorem{prop}[thm]{\protect\propositionname}
\theoremstyle{plain}
\newtheorem{cor}[thm]{\protect\corollaryname}
\theoremstyle{definition}
\newtheorem{example}[thm]{\protect\examplename}
\theoremstyle{plain}
\newtheorem{lem}[thm]{\protect\lemmaname}
\theoremstyle{definition}
\newtheorem{defn}[thm]{\protect\definitionname}
\DeclareMathOperator{\E}{\mathbb{E}}
\DeclareMathOperator{\R}{\mathbb{R}}
\DeclareMathOperator{\N}{\mathbb{N}}
\DeclareMathOperator{\1}{\mathbbm{1}}
\DeclareMathOperator{\CU}{\mathcal{U}}
\DeclareMathOperator{\CG}{\mathcal{G}}
\DeclareMathOperator{\CQ}{\mathcal{Q}}
\DeclareMathOperator{\val}{val}
\DeclareMathOperator{\marg}{marg}
\providecommand{\corollaryname}{Corollary}
\providecommand{\definitionname}{Definition}
\providecommand{\examplename}{Example}
\providecommand{\lemmaname}{Lemma}
\providecommand{\propositionname}{Proposition}
\providecommand{\theoremname}{Theorem}
\providecommand{\corollaryname}{Corollary}
\providecommand{\definitionname}{Definition}
\providecommand{\examplename}{Example}
\providecommand{\lemmaname}{Lemma}
\providecommand{\propositionname}{Proposition}
\providecommand{\theoremname}{Theorem}
\begin{document}
\title{Value-Based Distance Between The Information Structures}
\date{\today}
\author{Fabien Gensbittel, Marcin P\k{e}ski, Jérôme Renault}
\begin{abstract}
We define the distance between two information structures as the largest
possible difference in the value across all zero-sum games. We provide
a tractable characterization of the distance, as the minimal distance
between 2 polytopes. We use it to show various results about the relation
between games and single-agent problems, the value of additional information,
informational substitutes, complements, etc. We show that approximate
knowledge is similar to approximate common knowledge with respect
to the value-based distance. Nevertheless, contrary to the weak topology,
the value-based distance does not have a compact completion: there
exists a sequence of information structures, where players acquire
more and more information, and $\varepsilon>0$ such that any two
elements of the sequence have distance at least $\varepsilon$. This
result answers by the negative the second (and last unsolved) of the
three problems posed by J.F. Mertens in his paper ``Repeated Games",
ICM 1986. 
\end{abstract}

\thanks{F. Gensbittel and J. Renault : Toulouse School of Economics, University
Toulouse Capitole. M. P\k{e}ski : Department of Economics, University
of Toronto. J. Renault gratefully acknowledges funding from ANR-3IA Artificial and Natural Intelligence Toulouse Institute, 
grant ANR-17-EUR-0010 (Investissements d'Avenir program) and ANR MaSDOL. M. P\k{e}ski gratefully acknowledges the financial support from the SSHRC Insight Grant and the hospitality of HEC Paris,
where parts of this research were completed. }
\maketitle

\section{Introduction}

The role of information is of fundamental importance for the economic
theory. It is well known that even small differences in information
may lead to significant differences in the behavior (\citet{rubinstein_electronic_1989}).
A recent literature on the strategic (dis)-continuities has studied
this observation very intensively and in full generality. The approach
is to typically consider all possible information structures, modeled
as elements of an appropriately defined universal information structure,
and study the differences in the strategic behavior across all games.

A similar methodology has not been applied to study the relationship
between the information, and the agent's bottom line, their payoffs.
There are perhaps few reasons for this. First, following \citet{dekel_topologies_2006},
\citet{weinstein2007structure} and others, the literature has focused
on the interim rationalizability as the solution concept. Compared
with the equilibrium, this choice has several advantages: it is easier
to analyze, it is more robust from the decision-theoretic perspective,
it can be factorized through the Mertens-Zamir hierarchies of beliefs
(\citet{dekel_topologies_2006}, \citet{ElyPeski06}), and, it does
not suffer from the existence problems (unlike the equilibrium - see
\citet{simon_games_2003}). However, the value of information is typically
measured in the ex ante sense, where solution concepts like the Bayesian
Nash equilibrium being more appropriate. Second, the multiplicity
of solutions necessitates that the literature takes the set-based
approach. This, of course, makes the quantitative comparison of the
value of information difficult. Last but not least, the freedom in
choosing games without any restriction makes the payoff comparison
between information structures useless (see Section \ref{subsec:Payoff distance NZS}
for a detailed discussion of this point).

Nevertheless, we find the questions about the value of a general-formulated
information as important and fascinating. How to measure the value
of information on the universal type space? How much a player can
gain (or lose) from an additional information? Which information structures
are similar, in the sense that they always lead to the same payoffs?
In order to address these questions, we propose to restrict the analysis
to zero-sum games. It is a natural restriction for both conceptual
and methodological reasons. The question of the value of information
is of special importance when the players' interests are opposing.
With zero-sum games, the information has natural comparative statics:
a player is better off when her information improves and/or the opponent's
information worsens (\citet{peski_comparison_2008}). Such comparative
statics are intuitive, they hold in the single-agent decision problems
(\citet{blackwell1953}), but they do not hold for general games,
where better information may worsen a player's strategic position.
Moreover, the restriction avoids some of the problems mentioned in
the previous paragraph. Finite zero-sum games have always an equilibrium
on common prior information structures (\citet{mertens_sorin_zamir_2015})
that depends only on the distribution over hierarchies of beliefs.
The equilibrium has decent decision-theoretic foundations (\citet{brandt2019justifying}).
Even if the equilibrium is not unique, the ex ante payoff always is
and it is equal to the value of the zero-sum game. Finally, the restriction
leads to novel issues comparing to the earlier literature, where many
of the results rely on either coordination games, or betting games
(\citet{rubinstein_electronic_1989}, \citet{morris_typical_2002},
\citet{ely_critical_2011}, \citet{chen_e-mail_2013} among others).We
believe that it is important to reconsider various phenomena to clarify
their relevance for different classes of games.

We define the distance between two information structures as the largest
possible difference in the value across all zero-sum payoff functions
that are bounded by a constant. This has a straightforward interpretation
as a tight upper bound on the gain or loss from moving from one information
structure to another. The first result provides a characterization
of the distance as the total variation distance between two sets of
information structures: those that are better for player one than
one of the original information structures and those that are worse
than the other one. This distance can be computed as a solution to
a finitely dimensional convex optimization problem.

The characterization is tractable in applications. In particular,
we use it to describe the conditions under which the distance between
information structures is maximized in single-agent problems (which
are special class of zero-sum games). We recover the characterization
of a comparison of information structures from \citet{peski_comparison_2008},
which generalized to 2 players the Blackwell's comparison of experiments
via garblings. We also discuss the role of the marginal distribution
over the state.

An important aspect of our approach is its natural and simple interpretation.
It allows us to define the value of an additional piece of information
as the distance between two type spaces, in one of which one or two
players have an access to new information. The above characterization
of the distance allows us to prove numerous results about the value.
We give conditions when the value of new information is maximized
in the single-agent problems. Next, we describe the situations when
the value of one piece of information decreases when the other piece
of information becomes available, in other words, when the two pieces
of information are substitutes. Similarly, we show that, under some
conditions, the value of one piece of information increases when the
other player gets an additional information, or in other words, that
the two pieces of information, one for each player, are complements.
Finally, we show that the new information matters only if it is valuable
to at least one of the players individually. The joint information
contained in the correlation between player's signals is in itself
not valuable in the zero-sum games.

Apart from its quantitative metric aspect, our distance contains interesting
topological information. In a striking example, we show that any information
structure in which with a large (close to 1) probability, each player
assigns a large (close to 1) probability to some state is similar
to a structure in which the state is publicly revealed. In other words,
the distinction between approximate knowledge and approximate common
knowledge (\citet{mondsam:89}) is not relevant for the value-based
distance. There is a simple intuition for this. The common knowledge
is a statement about the joint information. In order to benefit from
it, the players' interests need to be somehow aligned, like in coordination
games. That is of course impossible in a situation of conflict.

More generally, we show that any sequence of countable information
structures converges to a countable structure if and only if the associated
hierarchies of beliefs converge in the induced weak topology of \citet{mertzam:85}.
Thus, at least in the neighborhoods of countable structures, the higher
order beliefs matter less and less in zero-sum games. This leads to
a question whether the higher order beliefs matter at all. Perhaps
surprisingly in the light of the above results, the answer is affirmative.
We demonstrate it by constructing an infinite sequence of information
structures $u^{n}$, such that all the information structures $n^{\prime}\geq n$
have the same $n$-the order hierarchies of beliefs. We show that
there exists $\varepsilon>0$ such that the value-based distance between
each pair of structures is at least $\varepsilon$. In the proof,
we construct a Markov chain with the first element of the chain is
correlated with the state of the world. We construct an information
structure $\mu^{n}$ so that player 1 observes the first $n$ odd
elements of the sequence and the other player observes the first $n$
even elements. Our construction implies that in information structure
$\mu^{n+1}$ each player gets an extra signal. Thus, having more and
more information may lead... nowhere. This is unlike the single-player
case, where more and more signals is a martingale and the value converges
for each decision problem. We conclude that our distance is not robust
with respect to the correct specification of higher order beliefs.

The last result sounds similar to the results from the strategic (dis)continuities
literature. However, we emphasize that our result is entirely novel.
In particular, and it has to deal with at least two major difficulties.
The first difficulty is that all earlier constructions heavily relied
on non-zero sum games: with either coordination games, or betting
elements. Such constructions do not work in zero-sum games. Another
difficulty is that when we construct games in which the players' payoffs
depend on their higher-order beliefs, we are constrained by an uniform
payoff bound.

An important contribution of our result is that it leads to an answer
to the last open problem posed in \citet{mertens_repeated_1986}\footnote{Problem 1 asked about the convergence of the value, and it was proved
false in \citet{ziliotto_zero-sum_2016}. Problem 3 asked about the
equivalence between the existence of the uniform value and the uniform
convergence of the value functions, it was proved to be false by \citet{monderer_asymptotic_1993}
and \citet{lehrer_discounting_1994}.}. Specifically, his Problem 2 asks about the equicontinuity of the
family of value functions over information structures across all (uniformly
bounded) zero-sum game. The positive answer would have imply the equicontinuity
of the discounted and the average value in repeated games, and it
would have consequences for the convergence in the limits theorems\footnote{\label{foot_zerosum} Equicontituity of value functions is used to
obtain limit theorems in several works as e.g. \citet{mertens1971value},
\citet{forges1982infinitely}, 
\citet{rosenberg2001operator}, \citet{rosenberg2000zero}, \citet{rosenberg2000maxmin},
\citet{rosenberg2004stochastic}, \citet{renault_value_2006},
\citet{gensbittel_value_2012}, \citet{venel2014commutative}, \citet{renault_venel_2016}.
See section \ref{subsec:Mertens} for a more detailed discussion}. One can show that the problem 2 is equivalent to a question whether
the value-based distance is totally bounded\footnote{Recall that a metric space if for all $\varepsilon>0$, it can be
covered by finitely many balls of radius $\varepsilon$.} on countable information structures. Unfortunately, our results imply
that the answer to the problem is negative. In particular, it is not
possible to approximate the universal information structure with finitely
many well-chosen information structures.

Our paper adds to the literature on the topologies of information
structures. This literature was spurred by an observation in \citet{rubinstein_electronic_1989}
that solution concepts are highly sensitive to higher-order beliefs.
\citet{dekel_topologies_2006} introduce \emph{strategic} and \emph{uniform-strategic
topologies. }In the latter, two types are close if, for any (not necessarily
zero-sum) game, the sets of (almost) rationalizable outcomes are (almost)
equal (see also \citet{morris_typical_2002}). \footnote{ \citet{dekel_topologies_2006}
focus mostly on the weaker notion of \emph{strategic topology} that
differs from the uniform strategic in the same way that the pointwise
convergence differs from uniform convergence.} \citet{chen_uniform_2010} and \citet{chen_uniform_2016} provide
a characterization of the uniform-strategic topology in terms of the
uniform weak topology on belief hierarchies. There are two key differences
between that and our approach. First, the uniform strategic topology
applies to all (including non-zero sum) games. Our restriction allows
us to clarify which of the results established in the literature hold
in situations of pure conflict. Second, we work with \emph{ex ante}
information structures and the equilibrium solution concept, whereas
the uniform strategic topology is designed to work on the \emph{interim}
level, with rationalizability. The ex ante equilibirum approach is
more appropriate for the value comparison and other related questions.
For instance, in the information design context, the quality of he
information structure is typically evaluated \emph{before} players
receive any information.

The value of information literature studies the impact of information
on the payoffs in various classes of games. Examples include single
agent problems (\citet{blackwell1953}, \citet{athey2018value}), zero-sum games
(\citet{gossner_value_2001}, \citet{shmaya_value_2006}), common
interest games \citet{lehrer2010signaling,lehrer2013garbling}, non-zero-sum static games (\citet{gossner2000comparison}),
Markov games (\citet{renault_value_2006}, \citet{peski_toikka}),
among many others. Our paper contributes with the characterization
of the tight upper bound on the loss/gain from moving from one information
structure to another. The characterization allows us to discuss various
results about the value of additional information. In particular,
we characterize the situations when the two pieces of information
are substitutes or complements in zero-sum games (\citet{hellwig2009knowing}
study when the information acquisition decisions are complements or
substitutes in a beauty contest game).

Finally, this paper contributes to a recent but rapidly growing field
of information design (\citet{kamenica_bayesian_2011}, \citet{ely_beeps_2015},
\citet{bergemann_bayes_2015}, to name a few). In that literature,
a principal designs an information structure which the agents use
to play a game with incomplete information. The objective is to maximize
the principal's payoff from the equilibrium outcome of some game.
Sometimes, the design of information may be divorced from the game
itself. For example, a bank may acquire a software to process and
analyze large amounts of financial information before knowing what
stock they are going to trade on, or, a spy master allocates resources
to different tasks or regions before she understands the nature of
future conflicts. Our result shows that the choice space of information
structures is large: there is no natural way in approximating the
space of choices with a finite set of structures.

Section \ref{sec:Model} defines the value-based distance. Section
\ref{sec:Computing} provides and discusses the characterization of
the distance as a total variation distance between two sets of measures.
Section \ref{sec:Value-of-additional} lists various results about
the value of additional information. We discuss the topological content
of the value-based distance in Section \ref{sec:Topology}. Section
\ref{sec:Compactness} shows that the space of countable information
structures is not totally bounded for the value-based distance. Section
\ref{sec:Relation-to-other} discusses the relation to other topologies
on the space of hierarchies of beliefs. The last section concludes.
The proofs are contained in the Appendix.

\section{Model\label{sec:Model}}

Let $K$ be a finite set, $\left|K\right|\geq2$. A (countable) \emph{information
structure} is an element $u\in\Delta\left(K\times\N\times\N\right)$
of the space of probabilities over tuples $\left(k,c,d\right)\in K\times\N\times\N$,
with the interpretation that $k$ is a state of the world, and $c$
and $d$ are the signals of, respectively, player 1 and player 2.
In other words, an information structure is a 2-player common prior
Harsanyi type space over $K$ with at most countably many types. The
set of information structures is denoted by $\CU=\Delta(K\times\N\times\N)$.

We will identify any probability over $K\times C\times D$ where $C,D$
are at most countable sets with an element of $\CU$, where we interpret
$C$ and $D$ as subsets of $\N$\footnote{ More precisely, we associate with every set $C$ which is at most
countable an enumeration, i.e. a bijective map $\phi_{C}$ between
$C$ and $\{0,...,|C|-1|\}$ when $C$ is finite or $\N$ when $C$
is infinite and identify the information structure $u\in\Delta\left(K\times C\times D\right)$
with the distribution of $(k,\phi_{C}(c),\phi_{D}(d))$ induced by
$u$. All our results are independent of the choice of these enumerations } . For $L=1,2,...$, let $\CU\left(L\right)$ be the subset of information
structures where each player receives a signal smaller or equal to
$L-1$ with probability $1$, so that each player has at most $L$
different signals.

Whereas previous papers in the literature restrict attention\footnote{For instance, one can read in \citet{BDM2010} ``We leave open the
question of what happens when the components of the state on which
the players have some information fail to be independent.... In this
situation the notion of monotonicity is unclear, and the duality method
is not well understood."} to a particular subset of $\CU$ (independent information, lack of
information on one side, fixed support...), we will study the general
case of information structures in $\CU$.

We evaluate information structures via the values of associated zero-sum
Bayesian games.\emph{ }A\emph{ payoff function} is a map $g:K\times I\times J\to[-1,1]$,
where $I,J$ are finite sets of actions. The set of payoff functions
with action sizes of cardinality $\leq L$ is denoted by $\CG(L)$,
and let $\CG=\bigcup_{L\geq1}\CG(L)$ be the set of all payoff functions.
A information structure $u$ and a payoff function $g$ together define
a zero-sum Bayesian game $\Gamma(u,g)$ played as follows: First,
$(k,c,d)$ is selected according to $u$, player 1 learns $c$ and
player 2 learns $d$. Then simultaneously player 1 chooses $i\in I$
and player 2 chooses $j\in J$, and finally the payoff of player 1
is $g(k,i,j)$. The zero-sum game $\Gamma(u,g)$ has the value (the
unique equilibrium, or the minmax payoff of player 1) which we denote
by $\val(u,g)$. We sometimes refer to player 1 as the maximizer,
and to her opponent as the minimizer.

We define \emph{the value-based distance} between two information
structures as the largest possible difference in the value across
all payoff functions: 
\begin{equation}
d(u,v)=\sup_{g\in\CG}|\val(u,g)-\val(v,g)|.\label{eq:ZS distance}
\end{equation}

The distance (\ref{eq:ZS distance}) satisfies two axioms of a metric:
the symmetry, and the triangular inequality. However, it is possible
that $d(u,v)=0$ for $u\neq v$. For instance, if we start from an
information structure $u$ and relabel the signals of the players,
we obtain an information structure $u'$ which is formally different
from $u$, but ``equivalent'' to $u$. Say that $u$ and $v$ are
equivalent, and write $u\sim v$, if for all game structures $g$
in $\CG$, $\val(u,g)=\val(v,g)$. We let $\CU^{*}=\CU/\sim$ be the
set of equivalence classes. Thus, $d$ is a pseudo-metric on $\CU$
and a metric on $\CU^{*}$.

In order to state our main results, we will also use the total variation
norm given for each $u,v\in\CU$ by: 
\[
\|u-v\|=\sum_{k,c,d}|u(k,c,d)-v(k,c,d)|.
\]
Since all payoffs are in $[-1,1]$ it is easy to see that $d(u,v)\leq\|u-v\|\leq2$.
It is also convenient to always identify $I$ and $J$ with the sets
$\{0,...,|I|-1\}$ and $\{0,...,|J|-1\}$ using some enumeration and
to extend $g$ on $K\times\N\times\N$ by letting $g(k,i,j)=-1$ if
$i\geq|I|$ and $g(k,i,j)=1$ if $i<|I|$ and $j\geq|J|$. Note that
the value of the game $\Gamma(u,g)$ is not modified and that optimal
strategies remain optimal if we allow the players to take any action
in $\N$. With this convention, each set $\CG(L)$ and thus also $\CG$
is identified with a subset of the set of maps from $K\times\N\times\N$
to $[-1,1]$.

\section{Computing $d(u,v)$\label{sec:Computing}}

We give here a tractable characterization of $d(u,v)$ and we illustrate
it with some applications.

\subsection{Characterization of $d\left(u,v\right)$}

We start with the notion of garbling, used by Blackwell to compare
statistical experiments \citet{blackwell1953}. A \emph{garbling}
is an element $q:\N\to\Delta(\N)$, and the set of all garblings is
denoted by $\CQ$. Given a garbling $g$ and an information structure
$u$, we define the information structures $q.u$ and $u.q$ so that
for each $k,c,d$, 
\begin{align*}
q.u(k,c,d) & =\sum_{c^{\prime}}u(k,c{}^{\prime},d)q(c{}^{\prime}|c),\text{ and}\\
u.q(k,c,d) & =\sum_{d^{\prime}}u(k,c,d{}^{\prime})q(d'|d).
\end{align*}
We also denote by $\CQ(L)$ the subset of garblings $q:\N\to\Delta(\{0,...,L-1\})$.
There are two interpretations of a garbling. First, the garbling can
be seen as an information loss: suppose that $(k,c,d)$ is selected
according to $u$, $c'$ is selected according to the probability
$q(c)$, and player 1 learns $c'$ (and player 2 learns $d$). The
new information structure is exactly equal to $q.u$, where the signal
received by player 1 has been deteriorated through the garbling $q$.
Similarly, $u.q$ corresponds to the dual situation where the signal
of player 2 has been deteriorated. Further, the garbling $q$ can
also be seen as a behavior strategy of a player in a Bayesian game
$\Gamma(u,g)$: if the signal received is $c$, play the mixed action
$q(c)$. The relation between the two interpretations plays an important
role in the proof of Theorem \ref{thm1} below. 
\begin{thm}
\label{thm1} For each $u,v\in\CU$ for 
\begin{eqnarray}
\sup_{g\in\CG}\left(\val(v,g)-\val(u,g)\right) & = & \min_{q_{1}\in\CQ,q_{2}\in\CQ}\|q_{1}.u-v.q_{2}\|.\label{eq:d characterization}
\end{eqnarray}
If $u,v\in\CU\left(L\right)$ for $L\in\N$, then all the optima can
be attained by $L$-based structures (games and garblings). Hence,
\[
d(u,v)=\max\left\{ \min_{q_{1}\in\CQ,q_{2}\in\CQ}\|q_{1}.u-v.q_{2}\|,\min_{q_{1}\in\CQ,q_{2}\in\CQ}\|u.q_{1}-q_{2}.v\|\right\} .
\]
\end{thm}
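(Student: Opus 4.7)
My plan is to establish the main equality (\ref{eq:d characterization}) by proving the two inequalities separately, and then derive the formula for $d(u,v)$ by applying (\ref{eq:d characterization}) both to $(u,v)$ and to $(v,u)$ and taking the maximum. The creative step is to reinterpret the dual variable in the total-variation minimization on the right-hand side as a payoff function $g$, so that Sion's minimax theorem converts that primal problem into a supremum over games.

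The easy inequality $\sup_g(\val(v,g) - \val(u,g)) \le \min_{q_1,q_2}\|q_1.u - v.q_2\|$ follows from the monotonicity of the value under Blackwell-style deterioration. For any $q_1,q_2$ and any $g$, the garbling interpretation lets player 1 in $\Gamma(u,g)$ replicate any strategy available in $\Gamma(q_1.u,g)$, so $\val(u,g) \ge \val(q_1.u,g)$; symmetrically $\val(v,g) \le \val(v.q_2,g)$. Since the value is $1$-Lipschitz in total variation when $|g|\le 1$ (the pointwise bound $|\E_\mu[\cdot]-\E_\nu[\cdot]|\le\|\mu-\nu\|$ survives max-min), chaining gives
\begin{equation*}
\val(v,g) - \val(u,g) \le \val(v.q_2,g) - \val(q_1.u,g) \le \|v.q_2 - q_1.u\|,
\end{equation*}
uniformly in $g,q_1,q_2$.

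For the hard inequality, I invoke the dual $\|\mu-\nu\| = \sup_{\phi:|\phi|\le 1}\langle \phi,\mu-\nu\rangle$ together with Sion's minimax theorem. The integrand $\langle \phi, q_1.u - v.q_2\rangle$ is affine in $(q_1,q_2)$ and linear in $\phi$; the $\phi$-domain is convex and compact in the pointwise topology by Tychonoff, and $q_1.u - v.q_2 \in \ell^1$ ensures continuity. Because $\langle \phi, q_1.u\rangle$ depends only on $q_1$ and $\langle \phi, v.q_2\rangle$ only on $q_2$, the minimum splits additively and
\begin{equation*}
\min_{q_1,q_2}\|q_1.u - v.q_2\| = \sup_\phi\Bigl[\min_{q_2}\langle \phi, v.q_2\rangle - \max_{q_1}\langle \phi, q_1.u\rangle\Bigr].
\end{equation*}
Given a near-optimal $\phi$, I reinterpret it as a game $g(k,i,j) = \phi(k,i,j)$, choosing $I,J$ to contain the relevant signal support (finite whenever $u,v\in\CU(L)$). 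Having player 1 play the identity strategy $\sigma_1(i|c) = \mathbbm{1}_{i=c}$ yields $\E_{v,\mathrm{id},q_2}[g] = \langle \phi, v.q_2\rangle$, whence $\val(v,g) \ge \min_{q_2}\langle \phi, v.q_2\rangle$; symmetrically, player 2 playing identity forces $\val(u,g) \le \max_{q_1}\langle \phi, q_1.u\rangle$. Subtracting produces the required $\val(v,g) - \val(u,g) \ge \min_{q_2}\langle \phi, v.q_2\rangle - \max_{q_1}\langle \phi, q_1.u\rangle$.

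For the $\CU(L)$ refinement, since $u,v$ concentrate on $\{0,\ldots,L-1\}$, both $\phi$ and the garblings can be restricted to these indices without changing either side, so $g \in \CG(L)$ and $q_1,q_2 \in \CQ(L)$ at the optima; a routine truncation/limiting argument handles general $u,v \in \CU$. The closed-form expression for $d(u,v)$ then follows by combining (\ref{eq:d characterization}) with the version obtained by swapping $u$ and $v$. I expect the main obstacle to be the rigorous application of Sion's theorem in the countably infinite setting and the clean matching between the dual variable and a bounded payoff function; the constraint $|\phi|\le 1$ coming from the dual representation of total variation is precisely what makes the separator a valid element of $\CG$.
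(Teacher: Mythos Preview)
Your argument is correct and follows essentially the same route as the paper: the easy inequality via monotonicity of the value under garblings and its $1$-Lipschitz dependence on total variation, and the hard inequality via Sion's theorem applied to the bilinear form $\langle g, v.q_{2}-q_{1}.u\rangle$, followed by the identity-strategy bounds $\inf_{q_{2}}\langle g,v.q_{2}\rangle\le\val(v,g)$ and $\sup_{q_{1}}\langle g,q_{1}.u\rangle\ge\val(u,g)$. The one technical difference is which side carries compactness in Sion: the paper takes $(q_{1},q_{2})\in\CQ^{2}$ compact in the product topology and lets $g$ range directly over $\CG$, whereas you take $\phi\in[-1,1]^{K\times\N\times\N}$ compact by Tychonoff; the paper's choice is marginally cleaner because it yields a genuine $\min$ over garblings immediately and avoids the truncation step needed to turn a general bounded $\phi$ into a finite-action payoff $g\in\CG$ in the countable case.
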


The Theorem provides a characterization of the value-based distance
between two information structures $u$ and $v$ for each player as
a total variation distance between two sets obtained as garblings
of the original information structures $\left\{ q.u:q\in\CQ\right\} $
and $\left\{ v.q:q\in\CQ\right\} $. If the two information structures
are in $\CU(L)$, then all the suprema in the first line can be achieved
by payoffs functions in $\CG(L)$, and garblings in $\CQ(L)$.

The Theorem simplifies the problem of computing the value-based distance
in two ways. First, it reduces the dimensionality of the optimization
domain from payoff functions and strategy profiles (to compute value)
to a pair of garblings. More importantly, notice that the solution
to the original problem (\ref{eq:ZS distance}) is typically a saddle-point
as it involves finding optimal strategies in a zero-sum game. On the
other hand, the function $\left\Vert q_{1}.u-v.q_{2}\right\Vert $
is convex in garblings $(q_{1},q_{2})$, and the domains $\CQ$ are
convex compact sets. Thus, the right-hand side of (\ref{eq:d characterization})
is a convex optimization problem. Moreover, if $u,v\in\CU(L)$ for
some $L\in\N$, this convex problem is equivalent to a finite dimensional
polyhedral convex problem. The characterization of the distance is
quite tractable. We show it in numerous applications in this and subsequent
sections.

\subsection{Comments and applications}

\subsubsection{Single-agent problems}

A special case of a zero-sum game is a single-agent problem. Formally,
a payoff function $g\in\CG(L)$ is a \emph{single-agent (player 1)
problem} if $g\left(k,i,j\right)=g\left(k,i,j^{\prime}\right)$ for
any $k\in K$ and any $i,j,j^{\prime}\in\{0,...,L-1\}$ (or alternatively,
if the set of actions of player 2 is a singleton, $J=\left\{ *\right\} $).
Let $\CG_{1}\subset\CG$ be the set of player 1 problems. Then, for
each $g\in\CG_{1}$, each information structure $u$, $\val\left(g,u\right)$
is the maximal expected payoff of player 1 in problem $g$. Let 
\begin{equation}
d_{1}\left(u,v\right):=\sup_{g\in\CG_{1}}\left|\val\left(u,g\right)-\val\left(v,g\right)\right|\leq d\left(u,v\right).\label{eq:single agent distance}
\end{equation}
Analogously, we can define $d_{2}\left(u,v\right)$ as the distance
measured using only player 2 single agent problems.

A simple consequence of the definition is that the single-agent distance
depends only on the distribution of the posterior beliefs of player
$1$ about the state. For each $u\in\CU$, let $\tilde{u}\in\Delta(\Delta(K))$
denote the distribution of the posterior beliefs of player $1$ about
the state induced by $u$, and $D$ be the set of suprema of affine
functions from $\Delta(K)$ to $[-1,1]$. Then, one shows that 
\begin{equation}
d_{1}(u,v)=\sup_{f\in D}\left|\int_{p\in\Delta(K)}f(p)d{\tilde{u}}(p)-\int_{p\in\Delta(K)}f(p)d{\tilde{v}}(p)\right|.\label{eq:d1weak}
\end{equation}

A natural question is when the two distances are equal, or, alternatively,
when the maximum in (\ref{eq:ZS distance}) is attained by the single-agent
problems. The next result provides a partial answer to this question.
For any structure $u\in\Delta\left(K\times C\times D\right)$, we
say that the players information is \emph{conditionally independent},
if under $u$, signals $c$ and $d$ are conditionally independent
given $k$. 
\begin{prop}
\label{prop: SA=00003D00003DZS}Suppose that $u,v\in\Delta\left(K\times C\times D\right)$
are two information structures with conditionally independent information
such that $\marg_{K\times D}u=\marg_{K\times D}v$. Then, 
\[
d\left(u,v\right)=d_{1}\left(u,v\right).
\]
\end{prop}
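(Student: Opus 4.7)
The inequality $d_1(u,v)\le d(u,v)$ is immediate since $\CG_1\subseteq\CG$, so the plan is to establish the reverse inequality via the garbling characterization of Theorem~\ref{thm1}. Write
\[
A=\min_{q_1,q_2\in\CQ}\|q_1.u-v.q_2\|,\qquad B=\min_{q_1,q_2\in\CQ}\|u.q_1-q_2.v\|,
\]
so that $d(u,v)=\max\{A,B\}$. It suffices to bound both $A$ and $B$ by $d_1(u,v)$; I focus on $A$, the argument for $B$ being symmetric after swapping the roles of $u$ and $v$.

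The first step is to unpack the structural assumption. Conditional independence gives factorizations $u(k,c,d)=\mu_u(k)\alpha_u(c\mid k)\beta_u(d\mid k)$ and $v(k,c,d)=\mu_v(k)\alpha_v(c\mid k)\beta_v(d\mid k)$. The equality of marginals on $K\times D$ forces $\mu_u(k)\beta_u(d\mid k)=\mu_v(k)\beta_v(d\mid k)$ for every $(k,d)$; summing over $d$ yields $\mu_u=\mu_v=:\mu$ and hence $\beta_u=\beta_v=:\beta$ on the support of $\mu$. Thus $u$ and $v$ differ only through the player-$1$ kernels $\alpha_u,\alpha_v$.

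Restricting the minimum defining $A$ to $q_2=\mathrm{id}$ yields $A\le\min_{q_1}\|q_1.u-v\|$. For any garbling $q_1$, setting $\tilde\alpha(c\mid k):=\sum_{c'}q_1(c\mid c')\alpha_u(c'\mid k)$, the common $\mu,\beta$ structure gives
\[
\|q_1.u-v\|=\sum_{k,c,d}\mu(k)\beta(d\mid k)\bigl|\tilde\alpha(c\mid k)-\alpha_v(c\mid k)\bigr|=\|q_1.\marg_{K\times C}u-\marg_{K\times C}v\|,
\]
using $\sum_d\beta(d\mid k)=1$. This factorization identity, which relies crucially on the fact that garbling $q_1$ acts only on the player-$1$ signal while $\mu$ and $\beta$ are shared by $u$ and $v$, is the heart of the argument.

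Finally, applying Theorem~\ref{thm1} to the pair $\marg_{K\times C}u,\marg_{K\times C}v$, viewed as information structures in which player~$2$'s signal is a point mass, yields
\[
\min_{q_1}\|q_1.\marg_{K\times C}u-\marg_{K\times C}v\|=\sup_{g\in\CG_1}\bigl(\val(v,g)-\val(u,g)\bigr)\le d_1(u,v),
\]
since for such degenerate structures garbling player~$2$'s signal is pointless and the value of any two-player game reduces to player~$1$'s single-agent value. Combining the two displays gives $A\le d_1(u,v)$. The same argument applied to $B$, with $q_1=\mathrm{id}$ in $u.q_1$ and garbling only player~$1$'s signal of $v$, yields $B\le d_1(u,v)$ and completes the proof. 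The principal obstacle is the factorization in the third paragraph; once that is in place the rest is bookkeeping together with the single-agent specialization of Theorem~\ref{thm1}.
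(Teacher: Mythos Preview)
Your proof is correct and in fact slightly more streamlined than the paper's. Both arguments hinge on the same factorization: under conditional independence and equal $K\times D$ marginals, $u$ and $v$ share the kernels $\mu$ and $\beta$, so that $\|q_1.u-v\|=\|q_1.u'-v'\|$ with $u'=\marg_{K\times C}u$, $v'=\marg_{K\times C}v$. Where the two proofs differ is in how they handle the garbling $q_2$ on player~2's side. The paper establishes the pointwise lower bound $\|u.q_2-q_1.v\|\geq\|u-q_1.v\|$ for \emph{every} $q_1,q_2$, via the elementary inequality $|x+y|\geq|x|+\operatorname{sgn}(x)\,y$ combined with $\sum_d\Delta(k,d)=0$; this yields the equality $\min_{q_1,q_2}\|u.q_2-q_1.v\|=\min_{q_1}\|u-q_1.v\|$. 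You instead bypass this computation entirely by observing that the trivial upper bound obtained from $q_2=\mathrm{id}$ is already enough, since $\min_{q_1}\|q_1.u-v\|=\min_{q_1}\|q_1.u'-v'\|\leq d_1(u,v)$ by the single-agent version of Theorem~\ref{thm1}. Your route is shorter and avoids the inequality trick; the paper's route has the minor advantage of showing that the two-player minimum is actually \emph{attained} with player~2's garbling equal to the identity, but that extra information is not needed for the proposition.
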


Proposition \ref{prop: SA=00003D00003DZS} says that, given two conditionally
independent structures, if the information of player 2 is the smae,
the distance is equal to value of the difference between the information
of player 1. The proof of the Proposition relies on the characterization
from Theorem \ref{thm1} and it shows that the minimum in the optimization
problem is attained by the same pair of garblings as in the single-agent
version of the problem.

\subsubsection{Comparison of information structures\label{subsec:Comparison-of-information}}

We partially order the information structures by how good they are
for player 1: given $u$, $v$ in $\CU$, say that \emph{player 1
prefers $u$ to $v$}, write $u\succeq v$, if for all $g$ in $\CG$,
$\val(u,g)\geq\val(v,g)$. The order $\preceq$ is reflexive and transitive
(and it is also antisymmetric on $\CU^{*}$). One shows that the value
is monotonic in the information of the players: for any garbling $q$,
player 1 always weakly prefers $u$ to $q.u$, and $u.q$ to $u$.

Theorem \ref{thm1} implies that 
\[
u\succeq v\Longleftrightarrow\exists_{q_{1},q_{2}\in\CQ}q_{1}.u=v.q_{2}.
\]
(To see why, notice that $u\succeq v$ is equivalent to the left-hand
side of the first equality in (\ref{eq:d characterization}) must
be weakly negative, which is equivalent to the second claim above.)
This characterization was initially obtained in \citet{peski_comparison_2008}
who generalized the Blackwell characterization of more informative
experiment to the multi-player setting.

Additionally, we obtain the following Corollary to Theorem \ref{thm1}: 
\begin{cor}
\label{cor: d charact comparison}For all information structures $u,v$,
\begin{equation}
\sup_{g\in\CG}\left(\val(v,g)-\val(u,g)\right)=\inf_{u'\preceq u,v'\succeq v}\|u'-v'\|.\label{eq:Thm1 as comparison}
\end{equation}
\end{cor}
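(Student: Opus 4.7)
The plan is to combine Theorem \ref{thm1} with the garbling characterization of the order recorded in Section \ref{subsec:Comparison-of-information}, namely $u \succeq v$ iff there exist $q_1,q_2 \in \CQ$ with $q_1.u = v.q_2$, together with the monotonicity of $\val$ with respect to $\succeq$. The equality in (\ref{eq:Thm1 as comparison}) will follow from two matching inequalities.

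For the inequality $\inf_{u' \preceq u,\, v' \succeq v}\|u'-v'\| \leq \sup_{g\in\CG}(\val(v,g) - \val(u,g))$: Theorem \ref{thm1} rewrites the right-hand side as $\min_{q_1,q_2\in\CQ}\|q_1.u - v.q_2\|$. For any garblings $q_1, q_2$, set $u' := q_1.u$ and $v' := v.q_2$. Applying the garbling characterization of $\succeq$ with the pairs $(q_1,\mathrm{id})$ and $(\mathrm{id},q_2)$ respectively yields $u \succeq u'$ (since $q_1.u = u'.\mathrm{id}$) and $v' \succeq v$ (since $\mathrm{id}.v' = v.q_2$), so $(u',v')$ is admissible in the infimum and $\|u'-v'\|=\|q_1.u - v.q_2\|$ bounds the infimum from above. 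Taking the minimum over $(q_1,q_2)$ concludes.

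For the reverse inequality: fix any $u' \preceq u$, $v' \succeq v$, and any payoff function $g \in \CG$. Monotonicity of the value in $\succeq$ gives $\val(u,g) \geq \val(u',g)$ and $\val(v',g) \geq \val(v,g)$, and hence
\[
\val(v,g) - \val(u,g) \;\leq\; \val(v',g) - \val(u',g) \;\leq\; d(u',v') \;\leq\; \|u' - v'\|,
\]
where the last bound is the elementary inequality $d \leq \|\cdot\|$ noted in Section \ref{sec:Model}. Taking the supremum over $g$ and then the infimum over $(u',v')$ yields the desired bound.

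The proof is essentially bookkeeping: the content of the corollary is the observation that the pairs $(q_1.u,\, v.q_2)$ appearing in Theorem \ref{thm1} form an extremal subfamily among all pairs $(u',v')$ with $u' \preceq u$ and $v' \succeq v$, so that relaxing from garbling images on each side to the whole order cones does not decrease the minimal total-variation distance. There is no real obstacle once Theorem \ref{thm1}, the garbling characterization of $\succeq$, and monotonicity of $\val$ are in hand.
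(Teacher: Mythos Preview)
Your proof is correct and follows essentially the same route as the paper's: one inequality uses that $q_1.u\preceq u$ and $v.q_2\succeq v$ to pass from the garbling minimum in Theorem~\ref{thm1} to the infimum over the order cones, and the other inequality uses monotonicity of $\val$ with respect to $\succeq$ together with $|\val(v',g)-\val(u',g)|\leq\|u'-v'\|$. The only cosmetic difference is that you justify $q_1.u\preceq u$ and $v.q_2\succeq v$ via the full garbling characterization of $\succeq$, whereas the paper simply invokes the monotonicity of the value under garblings stated earlier in Section~\ref{subsec:Comparison-of-information}.
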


This observation provides an additional interpretation to the characterization
from Theorem \ref{thm1}: the maximum gain from replacing information
structure $u$ by $v$ is equal to the minimum total variation distance
between the set of information structures that are worse than $u$
and those that are better than $v$.

\subsubsection{The impact of the marginal over $K$}

Among many ways that two information structures can differ, the most
obvious one is that they may have different distributions over the
states $k$. In order to capture the impact of such differences, the
next result provides tight bounds on the distance between two type
spaces with a fixed distribution of a state: 
\begin{prop}
\label{prop:diameter} For each $p,q\in\Delta K$, 
\begin{align}
\min_{u,v:\marg_{K}u=p,\marg_{K}v=q}d\left(u,v\right) & =\sum_{k}\left|p_{k}-q_{k}\right|,\label{eq:diameter bounds}\\
\max_{u,v:\marg_{K}u=p,\marg_{K}v=q}d\left(u,v\right) & =2\left(1-\max_{p^{\prime},q{}^{\prime}\in\Delta K}\sum_{k}\min\left(p_{k}q{}_{k}^{\prime},p{}_{k}^{\prime}q_{k}\right)\right).\nonumber 
\end{align}
If $p=q$, the upper bound is equal to 
\begin{align*}
\max_{u,v:\marg_{K}u=\marg_{K}v=p}d(u,v) & =2(1-\max_{k}p_{k}).
\end{align*}
\end{prop}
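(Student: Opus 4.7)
I would handle the three assertions in order.

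\emph{Minimum.} The lower bound $d(u,v)\geq \sum_k|p_k-q_k|$ follows by restricting to payoff functions depending only on the state: for any $f:K\to[-1,1]$, set $g(k,i,j)=f(k)$, so $\val(u,g)=\sum_k p_kf(k)$ and $\val(v,g)=\sum_k q_kf(k)$, and maximizing $|\sum_k(p_k-q_k)f(k)|$ over $\|f\|_\infty\leq1$ gives the bound. The matching upper bound is attained by the totally uninformative pair $u(k,0,0)=p_k$, $v(k,0,0)=q_k$, for which $d(u,v)\leq\|u-v\|=\sum_k|p_k-q_k|$.

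\emph{Maximum (upper bound).} Abbreviate $F(p,q):=\max_{p',q'\in\Delta K}\sum_k \min(p_kq'_k, p'_kq_k)$; the expression inside the max is invariant under the simultaneous swap $p\leftrightarrow q$ and $p'\leftrightarrow q'$, hence $F(p,q)=F(q,p)$. For each $p',q'\in\Delta K$, introduce the auxiliary structures
\[
  u^-_{q'}(k,c,d) = p_k q'_c\mathbbm{1}_{d=k}, \qquad v^+_{p'}(k,c,d) = q_k p'_d\mathbbm{1}_{c=k}.
\]
I would first verify, via the Blackwell characterization recorded after Corollary~\ref{cor: d charact comparison}, that $u^-_{q'}\preceq u$ whenever $\marg_K u=p$: the garblings $r_1(c\mid c'):=q'_c$ and $r_2(d\mid k):=\marg_{K\times D}u(k,d)/p_k$ satisfy $r_1.u = u^-_{q'}.r_2$, and symmetrically $v^+_{p'}\succeq v$ whenever $\marg_K v=q$. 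A routine case analysis on $(c,d)\in\{c=k,c\neq k\}\times\{d=k,d\neq k\}$, combined with the identity $|a-b|=a+b-2\min(a,b)$, then yields
\[
  \|u^-_{q'}-v^+_{p'}\| \;=\; 2\bigl(1-\textstyle\sum_k\min(p_kq'_k,p'_kq_k)\bigr).
\]
By Corollary~\ref{cor: d charact comparison} this upper-bounds $\sup_g(\val(v,g)-\val(u,g))$; taking the infimum over $p',q'$ and handling the reverse direction by swapping $u,v$ and using $F(p,q)=F(q,p)$ gives $d(u,v)\leq 2(1-F(p,q))$ uniformly over admissible $u,v$.

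\emph{Maximum (attainment) and the case $p=q$.} Take $u^+(k,k,0)=p_k$ (player 1 fully informed, player 2 uninformed) and $v^-(k,0,k)=q_k$ (vice versa). Since the uninformed player's signal space is trivial in each of these structures, every garbling of her signal is parameterized by a single distribution on $\N$, so Theorem~\ref{thm1} reduces
\[
  \sup_g\bigl(\val(u^+,g)-\val(v^-,g)\bigr) \;=\; \min_{\pi,\rho\in\Delta\N}\sum_{k,c,d}\bigl|q_k\pi_c\mathbbm{1}_{d=k}-p_k\rho_d\mathbbm{1}_{c=k}\bigr|,
\]
which the same case analysis (with $p\leftrightarrow q$) evaluates to $2(1-F(q,p))=2(1-F(p,q))$; combined with the upper bound, $d(u^+,v^-)=2(1-F(p,q))$, so the maximum is attained. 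For the specialization $p=q$, one has $F(p,p)=\max_{p',q'}\sum_k p_k\min(q'_k,p'_k)\leq(\max_k p_k)\sum_k\min(q'_k,p'_k)\leq\max_k p_k$, with equality at $p'=q'=\delta_{k^*}$ for $k^*\in\arg\max_k p_k$, giving $2(1-F(p,p))=2(1-\max_k p_k)$.

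The main technical obstacle is the dual role of the families $u^-_{q'},v^+_{p'}$: they must serve both as a Blackwell-bridge for \emph{every} pair $u,v$ with the prescribed marginals (giving the uniform upper bound) and, when paired with $u^+,v^-$ through Theorem~\ref{thm1}, yield a matching lower bound via the very same total-variation expression. Most of the work lives in the two Blackwell comparisons and the case-by-case total-variation bookkeeping; both are routine once the families are chosen, but the choice itself is the conceptual crux.
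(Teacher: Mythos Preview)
Your proof is correct and follows essentially the same route as the paper's. Both arguments hinge on the same extremal pair---player~1 fully informed with marginal $p$ versus player~2 fully informed with marginal $q$---and on the same total-variation computation $\|p_k q'_c\mathbf{1}_{d=k}-q_k p'_d\mathbf{1}_{c=k}\|=2(1-\sum_k\min(p_kq'_k,p'_kq_k))$. The only cosmetic difference is packaging: the paper first bounds $\sup_g(\val(u,g)-\val(v,g))\leq\sup_g(\val(\bar u,g)-\val(\underline v,g))$ by value monotonicity and then invokes Theorem~\ref{thm1} on the extremal pair, whereas you build the garbled families $u^-_{q'},v^+_{p'}$ explicitly and feed them into Corollary~\ref{cor: d charact comparison}; these are equivalent since $u^-_{q'},v^+_{p'}$ are exactly the $q_1.\underline u,\ \bar v.q_2$ that arise in the paper's minimization.
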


The lower bound in (\ref{eq:diameter bounds}) is reached when the
two information structures do not provide any information to none
of the players. The upper bound is reached with information structures
where one player knows the state perfectly, and the other player does
not know anything. 
\begin{example}
\label{ex: extreme distance}Suppose that $K=\left\{ 0,1\right\} $.
Fix $p,q\in\Delta K$. In such a case, one easily checks that the
maximum in the right hand side is attained by either $p^{\prime}=q{}^{\prime}=(1,0)$,
or $p^{\prime}=q{}^{\prime}=(0,1)$, or $p^{\prime}=p,q{}^{\prime}=q$.
It follows that for any two information structures $u,v$, 
\[
d\left(u,v\right)\leq2\left(1-\max\left(\min\left(p_{0},q_{0}\right),\min\left(p_{1},q_{1}\right),p_{0}q_{0}+p_{1}q_{1}\right)\right).
\]
The bound is attained when, for example, $u=\bar{u},v=\underline{v}$
and $\bar{u}\left(k,c,d\right)=p_{k}\1_{c=k}\1_{d=0}$ and $\underline{v}\left(k,c,d\right)=q_{k}\1_{c=0}\1_{d=k}$
for each $k,c,d\in\left\{ 0,1\right\} $.
\end{example}

\subsubsection{Optimal strategies}

Another useful property of the characterization is that the garblings
that solve (\ref{eq:d characterization}) can be used to transform
optimal strategies in one structure to approximately optimal strategies
on another structure. More precisely, we say that strategy $\sigma$
of player 1 is $\varepsilon$-optimal in game $g$ on structure $u$
if for any strategy $\tau$ of player 2, the payoff of $\sigma$ against
$\tau$ is no smaller than $\val\left(u,g\right)-\varepsilon$. We
similarly define $\varepsilon$-optimal strategies for player 2.

For a strategy $\sigma\in\CQ$ and a garbling $q_{1}\in\CQ$, define
$\sigma.q_{1}$ in $\CQ$ by $\sigma.q_{1}(c)=\sum_{c^{\prime}}q_{1}(c{}^{\prime}|c)\sigma(c{}^{\prime})$
for each signal $c$: player 1 receives signal $c$, then selects
$\chi$ according to $q_{1}(c)$ and plays $\sigma(\chi)$. We have 
\begin{prop}
\label{prop: optimal strategies}Fix finite $u,v$ and suppose that
$q_{1}$ and $q_{2}$ are solutions to (\ref{eq:d characterization}).
Then, if $\sigma$ is an optimal strategy in $g$ on $v$, then $\sigma.q_{1}$
is a $2d\left(u,v\right)$-optimal strategy in $g$ on $u$. Similarly,
if $\tau$ is optimal for player 2 in $g$ on $u$, then $\tau.q_{2}$
is $2d\left(u,v\right)$-optimal for player 2 in $g$ on $v$. 
\end{prop}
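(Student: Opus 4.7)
The plan is to reduce the claim to a total variation estimate via two bookkeeping identities that shuttle garblings back and forth between strategies and structures. Writing $\gamma(w,g,\sigma,\tau) = \sum_{k,c,d,i,j} w(k,c,d)\sigma(i|c)\tau(j|d)g(k,i,j)$ for the expected payoff on structure $w$ under strategies $\sigma,\tau$, the two identities I would establish first are
\begin{align*}
\gamma(u,g,\sigma.q_1,\tau) & = \gamma(q_1.u,g,\sigma,\tau), \\
\gamma(v,g,\sigma,\tau.q_2) & = \gamma(v.q_2,g,\sigma,\tau).
\end{align*}
Both follow by direct calculation from the definitions of $\sigma.q_1$, $\tau.q_2$, $q_1.u$, and $v.q_2$: in each case one interchanges the sum over a signal variable with the sum over its garbled counterpart. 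Combined with the elementary Lipschitz estimate $|\gamma(w,g,\sigma,\tau) - \gamma(w',g,\sigma,\tau)| \leq \|w-w'\|$, valid because $|g|\leq 1$ and $\sigma,\tau$ are probability distributions, these are the only ingredients required.

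For the first claim, let $\sigma$ be optimal for player 1 in $g$ on $v$ and let $\tau'$ be any strategy of player 2. Theorem \ref{thm1} gives $\|q_1.u - v.q_2\| \leq d(u,v)$. Chaining the identities with the Lipschitz estimate, I would write
\begin{align*}
\gamma(u,g,\sigma.q_1,\tau') & = \gamma(q_1.u,g,\sigma,\tau') \\
 & \geq \gamma(v.q_2,g,\sigma,\tau') - \|q_1.u - v.q_2\| \\
 & = \gamma(v,g,\sigma,\tau'.q_2) - \|q_1.u - v.q_2\| \\
 & \geq \val(v,g) - d(u,v) \;\geq\; \val(u,g) - 2d(u,v),
\end{align*}
where the penultimate step uses optimality of $\sigma$ on $v$ against the response $\tau'.q_2$, and the last step uses $|\val(u,g)-\val(v,g)|\leq d(u,v)$, which is immediate from the definition of $d$. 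Since $\tau'$ was arbitrary, $\sigma.q_1$ is $2d(u,v)$-optimal in $g$ on $u$.

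The second claim is entirely symmetric. If $\tau$ is optimal for player 2 on $u$ and $\sigma'$ is any strategy of player 1, running the analogous chain with inequalities reversed gives
\begin{align*}
\gamma(v,g,\sigma',\tau.q_2) & = \gamma(v.q_2,g,\sigma',\tau) \\
 & \leq \gamma(q_1.u,g,\sigma',\tau) + \|q_1.u - v.q_2\| \\
 & = \gamma(u,g,\sigma'.q_1,\tau) + \|q_1.u - v.q_2\| \\
 & \leq \val(u,g) + d(u,v) \leq \val(v,g) + 2d(u,v),
\end{align*}
where optimality of $\tau$ for player 2 on $u$ is applied to the response $\sigma'.q_1$. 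I do not foresee any substantive obstacle: the whole argument is a clean composition of the shuttle identities with the total variation characterization of $d(u,v)$ from Theorem \ref{thm1}; the only potential friction is keeping the notation straight while verifying that applying a garbling on a player's signal in the structure is the same operation as prepending that garbling to the player's strategy.
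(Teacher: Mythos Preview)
Your proof is correct and follows essentially the same route as the paper: the paper phrases the two ``shuttle'' identities as the associativity relations $(\sigma.q_1).u=\sigma.(q_1.u)$ and $(v.q_2).\tau=v.(\tau.q_2)$ in its pairing notation $\langle g,\cdot\rangle$, and then runs the identical chain of inequalities, bounding $\|q_1.u-v.q_2\|$ by $d(u,v)$ via Theorem~\ref{thm1} and then $\val(v,g)-\val(u,g)$ by $d(u,v)$.
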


\section{Value of additional information\label{sec:Value-of-additional}}

An important advantage of our approach is that the distance $d\left(u,v\right)$
has a straightforward interpretation as the maximum willingness to
pay to move from one information structure to another. This allows
us to discuss the value of additional information, which we define
as the the distance between structures with or without an extra piece
of information.

\subsection{Value of additional information: games vs. single agent }

Consider two information structures 
\begin{align*}
u & \in\Delta\left(K\times(C\times C{}^{\prime})\times D\right),\\
v & =\marg_{K\times C\times D}u.
\end{align*}
The structure $u$ is the same as $v$, but where player 1 can observe
an additional signal $c^{\prime}$. Because $u$ represents more information,
$u$ is (weakly) more valuable, and the value of the additional information
is defined as $d\left(u,v\right),$ i.e., the tight upper bound on
the gain from the additional signal. A corollary to Proposition \ref{prop: SA=00003D00003DZS}
shows that if the signals of the two players ate independent conditionally
on the state, the gain from the new information is the largest in
the single-agent problems. 
\begin{cor}
\label{prop: Games vs Single agent}Suppose that information in $u$
(and therefore in $v$) is conditionally independent. Then, 
\[
d\left(u,v\right)=d_{1}\left(u,v\right).
\]
\end{cor}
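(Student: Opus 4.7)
The plan is to deduce this corollary directly from Proposition \ref{prop: SA=00003D00003DZS}, so all that needs to be done is to verify its two hypotheses for the pair $(u,v)$. I expect no real obstacle: the argument is a routine check.

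First, the marginal condition holds trivially. Since $v = \marg_{K\times C\times D}u$, marginalizing out $C$ on both sides gives
\[
\marg_{K\times D}v \;=\; \marg_{K\times D}\marg_{K\times C\times D}u \;=\; \marg_{K\times D}u.
\]

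Second, $v$ inherits conditional independence from $u$. In $u$, the relevant ``player 1 signal'' is the pair $(c,c')$, and the hypothesis asserts that $(c,c')$ and $d$ are independent conditional on $k$: for every $k$ in the support of $\marg_K u$,
\[
u(c,c',d\mid k) \;=\; u(c,c'\mid k)\, u(d\mid k).
\]
Summing over $c' \in C'$ and using $v(c,d\mid k) = \sum_{c'} u(c,c',d\mid k)$ together with $v(d\mid k) = u(d\mid k)$ yields
\[
v(c,d\mid k) \;=\; v(c\mid k)\, v(d\mid k),
\]
which is precisely the conditional independence required of $v$.

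Finally, I would address the minor bookkeeping issue that the statement of Proposition \ref{prop: SA=00003D00003DZS} assumes a common signal space $\Delta(K\times C\times D)$, whereas $u$ lives on $K\times (C\times C')\times D$ and $v$ lives on $K\times C\times D$. Fixing any enumeration of $C\times C'$ into $\N$ and embedding $C$ into the same enumeration via $c\mapsto (c, c_0^*)$ for a distinguished $c_0^*\in C'$, both $u$ and $v$ can be viewed as elements of $\CU = \Delta(K\times \N\times \N)$ sharing the same signal space for player 1; neither the conditional independence nor the marginal on $K\times D$ is affected by this relabeling. Proposition \ref{prop: SA=00003D00003DZS} then applies and gives $d(u,v) = d_1(u,v)$, which is the conclusion.
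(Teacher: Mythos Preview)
Your proof is correct and follows exactly the route the paper intends: the corollary is stated (without separate proof) as a direct consequence of Proposition~\ref{prop: SA=00003D00003DZS}, and you have carefully verified its two hypotheses---the equality of marginals on $K\times D$ and the conditional independence of $v$---together with the harmless relabeling needed to place $u$ and $v$ in a common signal space. Nothing more is required.
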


The next example illustrates that the thesis of Proposition \ref{prop: Games vs Single agent}
does not hold without a conditional independence. 
\begin{example}
\label{ex value added}Suppose that $C=\left\{ *\right\} ,K=D=C{}^{\prime}=\left\{ 0,1\right\} $,
and 
\[
u\left(k,d,c^{\prime}\right)=\begin{cases}
\frac{1}{4}\frac{k+c^{\prime}}{2}, & \text{if }d=1,\\
\frac{1}{4}\left(1-\frac{k+c^{\prime}}{2}\right), & \text{if }d=0.
\end{cases}
\]
Then, the new information $c^{\prime}$ is independent from $k$,
but $d$ is both correlated with $k$ and $c^{\prime}$. Because $c^{\prime}$
is independent from the state, the value of new information in player
1 single-agent problem is 0: 
\[
d_{1}\left(u,v\right)=0.
\]
However, signal $c^{\prime}$ provides non-trivial information about
the signal of the other player, hence it is valuable in some games.
It is easily seen from \ref{thm1} that $d(u,v)>0$. Indeed, since
$u$ gives player $1$ more information, we have 
\[
d(u,v)=\min_{q_{1},q_{2}}\|u.q_{2}-q_{1}.v\|,
\]
where $q_{2}:\{0,1\}\rightarrow\Delta\{0,1\}$ and $q_{1}\in\Delta\{0,1\}$.
The existence of a pair $(q_{1},q_{2})$ such that $\|u.q_{2}-q_{1}.v\|=0$
is equivalent to the system of equations 
\[
\forall(k,d,c')\in\{0,1\}^{3},\;u.q_{2}(k,d,c')=v.q_{1}(k,d,c'),
\]
where the unknowns are $q_{1},q_{2}$, and one can check that this
system does not admit any solution. In other words, the information
that would be useless in a single-agent decision problem is valuable
in a strategic setting. 
\end{example}

A special case of the conditionally independent information is where
players receive multiple samples of independent Blackwell experiments.
In an online Appendix, we show how to compute the value of additional
Blackwell experiments. 

\subsection{Informational substitutes}

Next, we ask two questions about the impact of a piece of information
on the value of another piece of information. Suppose that 
\begin{align*}
u & \in\Delta\left(K\times(C\times C_{1}\times C_{2})\times D\right),\\
v & =\marg_{K\times(C\times C_{1})\times D}u,\\
u^{\prime} & =\marg_{K\times(C\times C_{2})\times D}u,\\
v^{\prime} & =\marg_{K\times C\times D}u.
\end{align*}
When moving from $v^{\prime}$ to $u^{\prime}$ or $v$ to $u$, player
1 gains an additional signal $c_{2}$. The difference is that in the
latter case, player 1 has more information that comes from signal
$c_{1}$. The next result shows the impact of an additional signal
on the value of information. 
\begin{prop}
\label{prop. Info substitutes}Suppose that, under $u$, $c_{1}$
is conditionally independent from $\left(c,c_{2},d\right)$ given
$k$. Then, 
\[
d\left(u^{\prime},v^{\prime}\right)\geq d\left(u,v\right).
\]
\end{prop}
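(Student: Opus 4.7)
The approach is to use Theorem~\ref{thm1} on both sides of the desired inequality and then lift an optimal pair of garblings for the primed problem to a feasible pair for the unprimed one, exploiting the conditional independence of $c_{1}$ to make the two total variation norms coincide.

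Since $v$ is obtained from $u$ by discarding player 1's signal $c_{2}$, we have $u \succeq v$; similarly $u' \succeq v'$. Hence the quantities $\sup_{g}(\val(v,g)-\val(u,g))$ and $\sup_{g}(\val(v',g)-\val(u',g))$ both vanish, and Theorem~\ref{thm1} (applied in the non-degenerate direction, with the roles of $u$ and $v$ swapped relative to the statement) reduces the distances to
\[
d(u,v) = \min_{q_{1},q_{2}\in\CQ}\|q_{1}.v - u.q_{2}\|, \qquad d(u',v') = \min_{q_{1},q_{2}\in\CQ}\|q_{1}.v' - u'.q_{2}\|.
\]
The conditional-independence hypothesis yields the product decompositions
\[
u(k,c,c_{1},c_{2},d) = u'(k,c,c_{2},d)\, p(c_{1}\mid k), \qquad v(k,c,c_{1},d) = v'(k,c,d)\, p(c_{1}\mid k),
\]
where $p(\cdot\mid k)$ is the conditional law of $c_{1}$ given $k$ under $u$; the same $p$ governs $v$ because marginalizing $c_{2}$ out preserves the factorization.

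Now let $(q_{1}',q_{2}')$ attain the minimum defining $d(u',v')$. Define a garbling $q_{1}$ on player 1's $(c,c_{1})$-signals by running $q_{1}'$ on the $c$-coordinate to produce a $(c,c_{2})$-pair and carrying $c_{1}$ unchanged into the output, and set $q_{2}=q_{2}'$. Substituting the decompositions above into $q_{1}.v$ and $u.q_{2}$ shows that both distributions factor as $p(c_{1}\mid k)$ times the corresponding objects built from $(q_{1}',q_{2}')$ acting on the primed structures. Taking absolute values of their difference, pulling the common factor $p(c_{1}\mid k)$ out, and summing over $c_{1}$ (which yields a factor of $1$ for each $k$), one obtains
\[
\|q_{1}.v - u.q_{2}\| = \|q_{1}'.v' - u'.q_{2}'\| = d(u',v').
\]
Since $(q_{1},q_{2})$ is feasible for the minimization defining $d(u,v)$, this gives $d(u,v)\leq d(u',v')$, as required.

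The main obstacle is largely notational: keeping the labels of the four signal spaces straight and verifying that the same conditional $p(c_{1}\mid k)$ really pulls out of both $v$ and $u$ (which is immediate once $c_{2}$ is marginalized out, since conditional independence is preserved under marginalization). Once the product decomposition is in hand, the identity between the two total variation norms is a short and direct computation, and the inequality follows by feasibility rather than by any further optimization.
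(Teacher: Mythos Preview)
Your proof is correct and follows essentially the same route as the paper. Both arguments apply Theorem~\ref{thm1} to write $d(u,v)=\min_{q_1,q_2}\|u.q_2-q_1.v\|$, then restrict player~1's garbling to the subclass that passes $c_1$ through unchanged and acts on the $c$-coordinate alone; the conditional-independence factorization $u(k,c,c_1,c_2,d)=u'(k,c,c_2,d)\,p(c_1\mid k)$ (and the analogous one for $v$) then lets the $p(c_1\mid k)$ factor out of the total-variation norm and sum to~$1$, collapsing the expression to $\|u'.q_2-q_1'.v'\|$. Your version is arguably cleaner: you state the product decomposition explicitly and lift an optimal primed pair, whereas the paper works with conditional probabilities and restricts the minimization domain, but the substance is identical.
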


Given the assumptions, the marginal value of signal $c_{2}$ decreases
when signal $c_{1}$ is also present. In other words, the two pieces
of information are substitutes.

The conditional assumption is equivalent to two simpler assumptions
(a) $c_{1}$ is conditionally independent from $\left(c,c_{2}\right)$
given $\left(k,d\right)$, and (b) $c_{1}$ and $d$ are conditionally
independent given $k$. Both (a) and (b) are important as it is illustrated
in the two subsequent examples. 
\begin{example}
Violation of (a). Suppose that $C=D=\left\{ *\right\} $, $K=C_{1}=C_{2}=\left\{ 0,1\right\} $,
$c_{1}$ and $c_{2}$ are uniformly and independently distributed,
and $k=c_{1}+c_{2}\mod2$. Then, signal $c_{2}$ is itself independent
from the state, hence useless without $c_{1}$. Knowing $c_{1}$ and
$c_{2}$ means knowing the state, which is, of course, very valuable.
Thus, the value of $c_{2}$ increases when $c_{1}$ is also present. 
\end{example}

\medskip{}

\begin{example}
Violation of (b). Suppose that $C=\left\{ *\right\} $, $K=C_{1}=C_{2}=D=\left\{ 0,1\right\} $,
$c_{1}$ and $d$ are uniformly and independently distributed, $c_{2}=d$,
and $k=c_{1}+d\mod2$. Notice that part (a) of the assumption holds
(given $\left(k,d\right)$, both signals $c_{1}$ and $c_{2}$ are
constant, hence, independent), but part (b) is violated. Again, signal
$c_{2}$ is useless alone, but together with $c_{1}$ it allows the
determine the state and the information of the other player. 
\end{example}

\subsection{Informational complements}

Another question is about the impact of an information of the other
player on the value of information. Suppose that 
\begin{align*}
u & \in\Delta\left(K\times(C\times C_{1})\times(D\times D_{1})\right),\\
v & =\marg_{K\times C\times(D\times D_{1})}u,\\
u^{\prime} & =\marg_{K\times(C\times C_{1})\times D}u,\\
v^{\prime} & =\marg_{K\times C\times D}u.
\end{align*}
When moving from $v^{\prime}$ to $u^{\prime}$ or $v$ to $u$, in
both cases, player 1 gains an additional signal $c_{1}$. However,
in the latter case, player 2 has an additional information that comes
from signal $d_{1}$. The next result shows the impact of the opponent's
signal on the value of information. 
\begin{prop}
\label{prop. Info complements} Suppose that $\left(c,c_{1}\right)$
and $d$ are conditionally independent given $k$. Then, 
\[
d\left(u^{\prime},v^{\prime}\right)\leq d\left(u,v\right).
\]
\end{prop}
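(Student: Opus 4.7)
The plan is to reduce the proof to Corollary~\ref{prop: Games vs Single agent} applied to the reduced pair $(u', v')$, and then to observe that the single-agent distance $d_1$ cannot tell apart $u$ from $u'$ or $v$ from $v'$.

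First I would verify that $u'$ has conditionally independent information. Since $u'$ is obtained from $u$ by marginalizing out $d_1$, the joint distribution of $(k, c, c_1, d)$ is identical under $u$ and under $u'$; the hypothesis $(c, c_1) \perp d \mid k$ under $u$ is a statement about this joint law alone, and it therefore carries over verbatim to $u'$. Corollary~\ref{prop: Games vs Single agent}, applied to $u'$ and to its marginal $v'$, then yields $d(u', v') = d_1(u', v')$.

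Next I would argue that $d_1(u, v) = d_1(u', v')$. By formula~(\ref{eq:d1weak}), $d_1$ of a pair of structures depends only on the distributions of player~1's posterior over $K$, which in turn are determined by the marginal joint laws of $k$ and player~1's signal. Marginalizing $d_1$ out of $u$ (to produce $u'$) and out of $v$ (to produce $v'$) does not modify these player-1 marginals, so the induced posterior distributions coincide and the equality $d_1(u, v) = d_1(u', v')$ follows.

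Combining these two identities with the elementary bound $d_1 \le d$ from~(\ref{eq:single agent distance}) gives
\[
 d(u', v') \;=\; d_1(u', v') \;=\; d_1(u, v) \;\le\; d(u, v),
\]
which is the desired inequality. I do not foresee any real obstacle: all the strategic content has already been packed into Corollary~\ref{prop: Games vs Single agent}, and this proposition is essentially a repackaging of that corollary. The only delicate point is that the stated hypothesis on $u$ deliberately does not involve $d_1$, so it is automatically inherited by the $d_1$-marginal $u'$ even though $u$ itself need not have conditionally independent information.
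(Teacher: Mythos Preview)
Your proposal is correct and follows essentially the same argument as the paper: apply Corollary~\ref{prop: Games vs Single agent} to the pair $(u',v')$ to obtain $d(u',v')=d_{1}(u',v')$, observe that $d_{1}(u',v')=d_{1}(u,v)$ since marginalizing out $d_{1}$ leaves player~1's first-order beliefs unchanged, and conclude via $d_{1}\leq d$. Your write-up is a bit more explicit about why the conditional-independence hypothesis transfers to $u'$, but the substance is identical.
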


Given the assumptions, signal $c_{1}$ becomes more valuable when
the opponent has also an access to additional information. Hence,
the two pieces of information are complements. 
\begin{example}
The independence assumptions are important. Suppose that $C=\left\{ *\right\} $
and $K=D=D_{1}=C_{1}=\left\{ 0,1\right\} $. The state is drawn uniformly.
Signal $d$ is equal to the state with probability $\frac{2}{3}$
and signal $d_{1}$ is equal to the state for sure. Finally, $c_{1}=1$
iff $d=k$. In other words, player 2's signal $d$ is an imperfect
information about the state. Signal $c_{1}$ carries information about
the quality of the signal of player 1. Such signal is valuable in
some games, hence $d\left(u^{\prime},v{}^{\prime}\right)>0.$ In the
same time, if player 2 learns the state perfectly, signal $c$ becomes
useless, and $d\left(u,v\right)=0$. (The last claim can be formally
proven using Proposition \ref{prop: joint info} below.) 
\end{example}

\subsection{Value of joint information}

Finally, we consider a situation where two players receive additional
information simultaneously. We ask when the joint information contained
in signals of two players is (not) valuable. We need the following
definition. Consider a distribution $\mu\in\Delta\left(X\times Y\times Z\right)$
over countable spaces. We say that random variables $x$ and $y$
are $\varepsilon$-conditionally independent given $z$ if 
\[
\sum_{z}\mu\left(z\right)\sum_{x,y}\left|\mu\left(x,y|z\right)-\mu\left(x|z\right)\mu\left(y|z\right)\right|\leq\varepsilon.
\]

Let 
\begin{align*}
u & \in\Delta\left(K\times(C\times C_{1})\times(D\times D_{1})\right),\\
v & =\text{marg}_{K\times C\times D}u.
\end{align*}

\begin{prop}
\label{prop: joint info}Suppose that $d_{1}$ is $\varepsilon$-conditionally
independent from $k\times c$ given $d$, and $c_{1}$ is $\varepsilon$-conditionally
independent from $\left(k,d\right)$ given $c$. Then, 
\[
d\left(u,v\right)\leq\varepsilon.
\]
\end{prop}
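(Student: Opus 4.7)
The plan is to invoke Theorem \ref{thm1} in both directions and exhibit, in each direction, explicit garblings whose total variation is controlled by exactly one of the two $\varepsilon$-conditional independence hypotheses. Writing $P$ for $u$, I would first bound $\min_{q_1, q_2} \|q_1.u - v.q_2\|$ by choosing $q_1$ to be the deterministic garbling that projects player 1's signal $(c, c_1)$ onto $c$, and $q_2$ to be the garbling that replaces player 2's signal $d$ in $v$ by $(d, d_1')$ with $d_1'$ drawn from $P(\cdot \mid d)$. Then $q_1.u$ has law $P(k, c, d, d_1)$ and $v.q_2$ has the product law $P(k,c,d)\,P(d_1 \mid d)$, so after factoring out $P(d)$ the total variation $\|q_1.u - v.q_2\|$ reduces to $\sum_d P(d) \sum_{k,c,d_1} |P(k, c, d_1 \mid d) - P(k, c \mid d) P(d_1 \mid d)|$, which is exactly the $\varepsilon$-conditional independence quantity for $d_1$ and $(k, c)$ given $d$, and hence $\leq \varepsilon$ by hypothesis.

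The second infimum $\min_{q_1, q_2} \|u.q_1 - q_2.v\|$ is handled by the mirror construction: let $q_1$ project player 2's signal $(d, d_1)$ onto $d$, and let $q_2$ replace $c$ by $(c, c_1')$ with $c_1'$ drawn from $P(\cdot \mid c)$. The identical collapsing identity yields the $\varepsilon$-conditional independence quantity for $c_1$ and $(k, d)$ given $c$, hence is $\leq \varepsilon$. Combining these via Theorem \ref{thm1} in the form $d(u,v) = \max\{\min\|q_1.u - v.q_2\|,\ \min\|u.q_1 - q_2.v\|\}$ (which holds for general countable $u,v$ by applying the first identity of Theorem \ref{thm1} to both $(u,v)$ and $(v,u)$) gives $d(u,v) \leq \varepsilon$.

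I do not foresee any substantive obstacle. The real content of the proposition is the observation that the two conditional independence hypotheses are exactly what is needed to simulate each player's extra signal from the original signals available in $v$, and that the simulation error is measured precisely by the total variation quantities produced by Theorem \ref{thm1}. The only mild bookkeeping is matching the product signal spaces via the implicit identification with $\N$ so that the chosen garblings lie in $\CQ$ as required.
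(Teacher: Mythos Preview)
Your proposal is correct and is essentially identical to the paper's own proof: the paper isolates the second direction as a lemma (with $q_2$ projecting $(d,d_1)\mapsto d$ and $q_1$ simulating $c_1$ from $c$ via $u(c_1|c)$), computes $\|u.q_2-q_1.v\|$ as the $\varepsilon$-conditional independence quantity for $c_1$ and $(k,d)$ given $c$, and then invokes the analogous argument with the roles of the players swapped for the other direction. Your two directions, garbling choices, and total-variation calculations match the paper's exactly up to relabeling.
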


Suppose that the new signal of each player is not providing any significant
information about the state of the world and original information
of the other player. Such a signal would be useless in a single-decision
problem. In principle, the new signals could be useful in a strategic
setting as the new signals may be correlated with each other, and
jointly correlated with the old information. (In fact, it is possible
that the new signals, if learned by both players, would completely
reveal the original information. See the example below.) Nevertheless,
the Proposition says that the information that is jointly shared by
the two players is not valuable in the zero-sum games.
\begin{example}
\label{exa6} Let $K=\left\{ 0,1\right\} $. Consider a sequence of
information structures $u_{n}\in\Delta\left(K\times\left\{ 0,...,n\right\} \times\left\{ 0,...n+1\right\} \right)$
and structure $u\in\Delta\left(K\times\left\{ 0\right\} \times\left\{ 0\right\} \right)$
such that 
\begin{align*}
u_{n}\left(0,l,l\right) & =u_{n}\left(1,l,l+1\right)=\frac{1}{2\left(n+1\right)}\text{ for each }l=0,...,n,\\
u\left(k,0,0\right) & =\frac{1}{2}\text{ for each }k=0,1.
\end{align*}
This is illustrated on the Figure below (all lines have equal probability;
the dashed line corresponds to state 0 and the solid line corresponds
to state 1). \vspace{0.5cm}
 \setlength{\unitlength}{.3mm} 
\end{example}

\begin{center}
\begin{picture}(350,120)

\put(20,59){- - - - - - - - - - - - } 
\put(20,28){- - - - - -- - - - - - } 
\put(20,118){- - - - - - - - - - - - } 
\put(20,89){-- - - - - - - - - - - } 
\par \textcolor{black}{{}\qbezier(19,32) (18,32)(108,02) }
\par \textcolor{black}{{} \qbezier(15,60) (15,60)(105,30)}
\par \textcolor{black}{{} \qbezier(15,90) (15,90)(105,60) }
\par \textcolor{black}{{} \qbezier(13,120) (32,114)(100,90)}
\par \put(10, 60){\oval(15, 130)} \put(100, 60){\oval(15, 130)}
\par \put(10,60){\circle*{5}} \put(10,30){\circle*{5}} \put(10,90){\circle*{5}}
\put(10,120){\circle*{5}} \put(100,90){\circle*{5}} \put(100,120){\circle*{5}}
\put(100,00){\circle*{5}} \put(100,60){\circle*{5}} \put(100,30){\circle*{5}}
\par \put(0,130){$P1$} \put(90,130){$P2$}
\par \put(-10,115){$0$} \put(-10,90){$1$} \put(-10,60){$...$}
\put(-10,30){$n$}
\par \put(110,0){$n+1$} \put(110,60){$...$} \put(110,30){$...$}
\put(110,90){$1$} \put(110,120){$0$}
\par \put(50, -15){$u_{n}$}
\par \put(150,70){$\xrightarrow[n\to\infty]{}$}
\par 
\put(215,59){ - - - - - - - - - - - - } 
\par \textcolor{black}{{} \qbezier(210,60) (250,40)(300,60)}
\par \put(250, 30){$u$}
\par \put(210, 60){\oval(15, 30)} \put(300, 60){\oval(15, 30)}
\par \put(210,60){\circle*{5}}
\par \put(300,60){\circle*{5}}
\par \put(200,80){$P1$} \put(290,80){$P2$}
\par \put(190,60){$0$}
\par \put(310,60){$0$}
\end{picture} 
\end{center}

\vspace{0.5cm}
 

The idea is that when $n$ is large, with high probability the players
will receive signals far from $0$ and $n$. These signals convey
very little information to the players and only differ for high-order
beliefs.

Notice that the signal of each player is $\frac{1}{n+1}$-independent
from the state of the world. The Lemma implies that $d\left(u_{n},u\right)\leq\frac{2}{n+1}\rightarrow0$.
In particular, the information structures $u_{n}$ converge to the
structure $u$, where no player receive any information.

\section{Value-based topology\label{sec:Topology}}

The two previous sections discussed the quantitative aspect of the
value-based distance. Now, we analyze its qualitative aspect: the
topological information contained in the distance. 

\subsection{Universal space of information structures\label{subsec:Universal-space-of}}

We begin with recalling relevant facts about the universal type spaces.
A (Mertens-Zamir) \emph{universal type space} is a pair of compact
metric spaces $\Theta_{1}$ and $\Theta_{2}$ with respect to the
following property: for each $i$, there exists a homeomorphism $\phi_{i}:\Theta_{i}\rightarrow\Delta\left(K\times\Theta_{-i}\right)$.
The universal type space is unique (up to homeomorphism). The spaces
$\Theta_{i}$ are constructed as sets of coherent sequences of finite
hierarchies of beliefs. The topology on the universal type space is
also is referred to as the product topology; the latter name comes
from the fact that it can be defined as the Tichonof's topology on
the sequences of finite hierarchies.

For each Harsanyi type space (even uncountable), there is a belief-preserving
mapping that uniquely maps the types into the associated hierarchies.
And so, if $u\in\Delta\left(K\times C\times D\right)$ is an information
structure, the signals $c$ and $d$ are mapped into hierarchies $\tilde{c}\in\Theta_{1}$
and $\tilde{d}\in\Theta_{2}$. The mapping induces a probability distribution
$\tilde{u}\in\Delta\Theta_{1}$ over the hierarchies of beliefs of
player 1 (and an analogous object for player 2). Let $\Pi_{0}=\left\{ \tilde{u}:u\in\CU\right\} $
be the space of all such distributions. The closure of $\Pi_{0}$
(in the weak topology) is denoted as $\Pi$. Using the Mertens-Zamir's
terminology, $\Pi$ is the set of consistent (Borel) probabilities
over the universal type space, and $\Pi_{0}$ is the subset of elements
of $\Pi$ with at most countable support. The space $\Pi$ is compact
metric and $\Pi_{0}$ is dense in $\Pi$ (see corollary III.2.3 and
theorem III.3.1 in \citet{mertens_sorin_zamir_2015}).

On the other hand, each $P\in\Pi$ can be treated as a common prior
type space with types in sets $\Theta_{i}$. The common prior is generated
as follows. First, a type $\theta_{1}$ of player 1 is drawn from
distribution $P$. Then, a state $k$ and a type $\theta_{2}$ of
player $2$ is drawn from distribution $\phi_{1}\left(\theta_{1}\right)$.
For each zero-sum game $g\in\CG$, Proposition III.4.2 in \citet{mertens_sorin_zamir_2015}
shows that one can find its value on $P$ that we denote as $\val\left(P,g\right)$.
In particular, we can define the value based distance (\ref{eq:ZS distance})
over elements of $\Pi$.

There are two fundamental observations. First, by Proposition III.4.4
in \citet{mertens_sorin_zamir_2015}, 
\[
\val(u,g)=\val(\tilde{u},g)\text{ for each }u\in\CU.
\]
In particular, the value on information structure $u$ depends only
on the induced distribution over hierarchies of beliefs. Second, by
lemma 41 in \citet{gossner_value_2001}), the value separates the
elements of $\Pi$: we have 
\[
d\left(P,Q\right)>0\text{ for any }P,Q\in\Pi,P\neq Q.
\]
The two fundamental observations imply that there is a bijection between
the set of $\Pi_{0}$ and the set $\CU^{*}$ of equivalence classes
of information structures.

\subsection{The weak vs. the value-based topology}

The next result says the spaces $\Pi_{0}$ and the set $\CU^{*}$
have the same topologies. 
\begin{thm}
\label{Thm: weak topology} A sequence $(u_{n})$ in $\CU^{*}$ converges
to $u\in\CU^{*}$ for the value-based distance if and only if the
sequence $(\tilde{u}_{n})$ converges weakly to $\tilde{u}$ in $\Pi_{0}$.
\end{thm}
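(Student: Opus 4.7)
The plan is to prove each implication separately. The ``only if'' direction is the easy one: $\Pi$ being a compact metric space in the weak topology, let $P$ be any weak subsequential limit of $(\tilde{u}_{n})$. Since $\val(\cdot,g)$ is continuous on $\Pi$ for every $g$ (Proposition~III.4.2 of \citet{mertens_sorin_zamir_2015}), and $d(u_{n},u)\to 0$ forces $\val(u_{n},g)\to\val(u,g)$ uniformly in $g$, combining along the subsequence gives $\val(P,g)=\val(\tilde{u},g)$ for every $g\in\CG$. By Lemma~41 of \citet{gossner_value_2001} values separate points of $\Pi$, hence $P=\tilde{u}$; every subsequence having the same weak limit, $\tilde{u}_{n}\to\tilde{u}$ weakly.

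The substantive direction is the converse. Given $\tilde{u}_{n}\to\tilde{u}$ weakly in $\Pi_{0}$, I use Theorem~\ref{thm1}: it suffices to produce, for each $\varepsilon>0$ and all large $n$, garblings $q_{1}^{n},q_{2}^{n}$ with $\|q_{1}^{n}.u_{n}-u.q_{2}^{n}\|=O(\varepsilon)$, together with a symmetric pair. The approach is to truncate and project. Since $u$ is countable, $\tilde{u}$ is purely atomic; I would pick top-$L$ atoms $\theta_{1}^{1},\ldots,\theta_{1}^{L}\in\Theta_{1}$ of its marginal carrying mass exceeding $1-\varepsilon$, and analogously $\theta_{2}^{1},\ldots,\theta_{2}^{L'}\in\Theta_{2}$ on the other side. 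Choose disjoint open balls $B_{i}$ around $\theta_{1}^{i}$ and $B_{j}'$ around $\theta_{2}^{j}$, each a continuity set of the relevant marginal, and let $\pi_{1}:\Theta_{1}\to\{0,1,\ldots,L\}$ send $\theta\in B_{i}$ to $i$ and the complement to $0$ (and analogously $\pi_{2}$). The radii must be chosen small enough that, for every $\theta\in B_{j}'$, the pushforward of $\phi_{2}(\theta)$ through $(\mathrm{id}_{K},\pi_{1})$ lies within total variation $\varepsilon$ of the corresponding pushforward of $\phi_{2}(\theta_{2}^{j})$. This is achievable because $\phi_{2}:\Theta_{2}\to\Delta(K\times\Theta_{1})$ is weakly continuous by the Mertens-Zamir construction and $\pi_{1}$ is $\phi_{2}(\theta_{2}^{j})$-almost surely continuous, so the finite pushforward is TV-continuous at $\theta_{2}^{j}$.

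With these ingredients, set $q_{1}^{n}$ to be the deterministic garbling $c\mapsto\pi_{1}(\tilde{c})$ applied to signals of $u_{n}$, and define $q_{2}^{n}$ on the signal $d_{j}^{u}$ of $u$ (the one with hierarchy $\theta_{2}^{j}$, for $1\leq j\leq L'$) as the conditional distribution of the player 2 signal of $u_{n}$ given that $\tilde{d}\in B_{j}'$, sending tail signals of $u$ to an arbitrary default. Bounding $\|q_{1}^{n}.u_{n}-u.q_{2}^{n}\|$ decomposes according to which preimage $P_{j}^{n}:=\pi_{2}^{-1}(B_{j}')$ the player 2 signal of $u_{n}$ falls in. The trash contribution is bounded by the $u_{n}$-mass outside $\bigcup_{j}B_{j}'$, which is $\varepsilon+o(1)$ by Portmanteau. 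On each $P_{j}^{n}$, Harsanyi consistency in $u_{n}$ rewrites $q_{1}^{n}.u_{n}(k,i,d_{n})$ as the $d_{n}$-marginal of $u_{n}$ times $\phi_{2}(\tilde{d})(k,B_{i})$; the local TV-continuity turns this into an $O(\varepsilon)$-perturbation of $u.q_{2}^{n}(k,i,d_{n})$, using also that $\alpha_{j}^{n}:=u_{n}(\tilde{d}\in B_{j}')$ converges to $u(d_{j}^{u})$ by Portmanteau. Summing yields the desired bound, and the symmetric pair is handled identically.

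The main obstacle is the continuity step: Mertens-Zamir provides only \emph{weak} continuity of $\phi_{2}$, whereas the argument needs \emph{total-variation} control of conditional beliefs. The resolution is to project through $\pi_{1}$ onto a finite set first, where weak and TV coincide, and only then use continuity. This is also what forces the top-$L$ truncation, since atoms of $\tilde{u}$ can accumulate in general, so one cannot simultaneously choose disjoint balls around all atoms with uniform continuity estimates.
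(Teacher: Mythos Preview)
Your ``only if'' direction is essentially identical to the paper's: compactness of $\Pi$, continuity of $\val(\cdot,g)$, and the separation lemma of \citet{gossner_value_2001}.

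For the ``if'' direction you take a genuinely different route. The paper builds \emph{continuous partitions of unity} $\{\kappa_{c}^{m}\}$ and $\{\kappa_{d}^{m}\}$ on $\Theta_{1},\Theta_{2}$, forms an auxiliary structure $K^{m}v$, and shows that the residual signal is approximately conditionally independent of $(k,\hat{d})$ given the coarse label $\hat{c}$; this feeds into Lemma~\ref{lem: cond independence} (the engine behind Proposition~\ref{prop: joint info}) to bound $\sup_{g}(\val(u_{n},g)-\val(L^{m}u_{n},g))$. Your approach instead invokes Theorem~\ref{thm1} directly: you produce explicit garblings (a deterministic projection $\pi_{1}$ on one side, a conditional law on the other) and control $\|q_{1}^{n}.u_{n}-u.q_{2}^{n}\|$ by hand. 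The key technical point you identify---that weak continuity of $\phi_{2}$ upgrades to total-variation continuity once pushed through a finite-valued $\pi_{1}$ that is a.s.\ continuous at the limit---is correct and effectively replaces the paper's partition-of-unity machinery. Your route is arguably more elementary (it avoids the conditional-independence proposition) at the cost of more explicit bookkeeping of tail masses and label identifications.

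Two points to tighten. First, your ``symmetric pair'' cannot in general reuse the same balls: the size constraint on $B_{j}'$ is phrased relative to $\pi_{1}$, and the symmetric constraint on $B_{i}$ would be relative to $\pi_{2}$, which is circular. The fix is simply to run the construction twice with independent choices of balls (the paper likewise handles the two inequalities by switching players). Second, to use Portmanteau on $\alpha_{j}^{n}=u_{n}(\tilde d\in B_{j}')$ you need weak convergence of the $\Theta_{2}$-marginals, whereas $\tilde u_{n}\to\tilde u$ is stated in $\Delta(\Theta_{1})$; this follows from continuity of $\theta_{1}\mapsto\marg_{\Theta_{2}}\phi_{1}(\theta_{1})$, but should be said.
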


The result says that a convergence in value-based topology to a countable
structure is equivalent to the convergence in distribution of finite-order
hierarchies of beliefs. Informally, at least around countable structures,
the higher-order beliefs have diminishing importance.

We describe the idea of the proof when $u$ is finite. In such a case,
we surround the hierarchies $\tilde{c}$ for $c\in C$ by sufficiently
small and disjoint neighborhoods, so that all hierarchies in the neighborhood
of $\tilde{c}$ have similar beliefs about the state and the opponent.
We do alike for the other player. The weak convergence ensures that
the converging structures assign large probability to the neighborhoods.
Finally, we show that any information about players hierarchy beyond
the neighborhood to which it belongs is almost conditionally independent
(in the sense of Section \ref{sec:Value-of-additional}) from the
information about the state and the opponents neighborhoods. This
allows us to utilize Proposition \ref{prop: joint info} (or, more
precisely, a part of its proof) to show that only the information
about neighborhoods matter, and the latter is similar to the information
in the limit structure $u$. If $u$ is countable, we also show that
it can be appropriately approximated by finite structures.

A reader may find such a result surprising, given the message of the
strategic discontinuities literature (\citet{rubinstein_electronic_1989},
\citet{dekel_topologies_2006}, \citet{weinstein2007structure}, \citet{ely_critical_2011},
etc.). In that literature, the convergence of finite-order hierarchies
does not imply strategic convergence even around finite structures
(for instance, see Example \ref{ex:Rubinstein} below). There are
multiple ways in which our setting differs. First, we rely on ex ante
equilibrium concept, rather than interim rationalizability. We are
also interested in the payoff comparison rather than the behavior.
Second, we restrict attention to common prior type spaces. Finally,
we restrict attention to zero-sum games.

We believe that the last restriction makes the key difference. As
we explain in Section \ref{subsec:Payoff distance NZS} below, the
ex ante focus and payoff comparison but without restriction to zero-sum
games lead to a topology that is significantly finer than the weak
topology. The role of common prior is less clear. On one hand, \citet{lipman:03}
imply that, at least from the interim perspective, common prior does
not generate significant restrictions on finite order hierarchies.
On the other hand, we rely on the ex ante perspective, and common
prior is definitely important for Proposition \ref{prop: joint info}
that plays an important role in the proof.

Finally, we want to emphasize that Theorem \ref{Thm: weak topology}
does not mean that the higher-order beliefs do not matter at all.
In fact, this result does not hold anymore if the limit $u$ is an
uncountable information structure, as illustrated in Section \ref{sec:Compactness}.

\subsubsection{Approximate knowledge and approximate common knowledge\label{subsec:Approximate-knowledge}}

Here, we describe an important application of Theorem \ref{Thm: weak topology}.
Recall that one of the initial impulses of the literature on the higher-order
beliefs was the realization of the difference between the approximate
knowledge and the approximate common knowledge. Here, we are going
to show that, at least when it comes to the knowledge of the payoff
state, the difference is not important for zero-sum games.

An information structure $u\in\Delta\left(K\times C\times D\right)$
exhibits $\varepsilon$-knowledge of the state if there is a mapping
$\kappa:C\cup D\rightarrow K$ such that 
\[
u\Big(\left\{ u(\{k=\kappa(c)\}|c)\geq1-\varepsilon\right\} \Big)\geq1-\varepsilon\text{ and }u\Big(\left\{ u(\{k=\kappa(d)\}|d)\geq1-\varepsilon\right\} \Big)\geq1-\varepsilon.
\]
In other words, the probability that any of the player player assigns
at least $1-\varepsilon$ to some state is at least $1-\varepsilon$. 
\begin{prop}
\label{prop: approximate knowledge}Suppose that $u$ exhibits $\varepsilon$-knowledge
of the state and that $v\in\Delta\left(K\times K_{C}\times K_{D}\right)$,
where $K_{C}=K_{D}=K$ and $\marg_{K}v=\marg_{K}u$, and $v\left(k=k_{C}=k_{D}\right)=1$.
(In other words, $v$ is a common knowledge structure with the only
information about the state.) Then, 
\[
d\left(u,v\right)\leq20\varepsilon.
\]
\end{prop}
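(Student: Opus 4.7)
The plan is to apply Theorem \ref{thm1}, which gives
\[
d(u,v) = \max\left\{\min_{q_{1},q_{2}\in\CQ}\|q_{1}.u - v.q_{2}\|,\;\min_{q_{1},q_{2}\in\CQ}\|u.q_{1} - q_{2}.v\|\right\},
\]
and then to exhibit, in each of the two minimizations, explicit garblings producing a small total variation. The key observation is that since each player in $u$ approximately knows the state via $\kappa$, one can ``collapse'' her signal to $\kappa$'s output, while on the $v$ side one can ``expand'' the known state into a signal drawn from the $u$-conditional.

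For the first minimization, I take $q_{1}\in\CQ$ to be the deterministic garbling $q_{1}(c)=\delta_{\kappa(c)}$ (so player $1$'s signal in $u$ is replaced by $\kappa(c)\in K\subset\N$), and $q_{2}\in\CQ$ to be the stochastic kernel $q_{2}(k_{D})=u(\,\cdot\,|\,k=k_{D})\in\Delta(\N)$ (so player $2$'s signal in $v$ is replaced by an independent draw from the conditional distribution of $d$ in $u$ given the state). A routine computation gives
\[
q_{1}.u(k,k_{C},d) = \sum_{c:\kappa(c)=k_{C}} u(k,c,d), \qquad v.q_{2}(k,k_{C},d) = \1(k_{C}=k)\,\marg_{K\times D}u(k,d).
\]
Splitting $\|q_{1}.u - v.q_{2}\|$ according to whether $k_{C}=k$ or $k_{C}\neq k$, each of the two contributions evaluates to $\Pr_{u}(\kappa(c)\neq k)$, so that $\|q_{1}.u - v.q_{2}\| = 2\Pr_{u}(\kappa(c)\neq k)$.

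To bound this probability, I use the $\varepsilon$-knowledge hypothesis: with $A = \{c:u(\{k=\kappa(c)\}|c)\geq 1-\varepsilon\}$ one has $u(A)\geq 1-\varepsilon$, hence
\[
\Pr_{u}(k=\kappa(c)) \geq (1-\varepsilon)\,u(A) \geq (1-\varepsilon)^{2} \geq 1-2\varepsilon,
\]
so that $\|q_{1}.u - v.q_{2}\|\leq 4\varepsilon$. For the second minimization, an entirely symmetric construction (using $\kappa$ on player $2$'s signal of $u$ and $q_{2}(k_{C})=u(\,\cdot\,|\,k=k_{C})$ on player $1$'s side of $v$) combined with the second $\varepsilon$-knowledge condition yields $\|u.q_{1}-q_{2}.v\|\leq 4\varepsilon$. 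Together these give $d(u,v)\leq 4\varepsilon\leq 20\varepsilon$, with the constant of Proposition \ref{prop: approximate knowledge} attained with room to spare.

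The only real obstacle is bookkeeping: one must carefully unpack the definitions of $q_{1}.u$ and $v.q_{2}$ (treating all signal alphabets as subsets of $\N$ so the garblings in $\CQ$ are well-defined), verify that the candidate $q_{2}$ is indeed a probability kernel into $\N$, and then see that after the substitution the total variation reduces to a probability directly controlled by the $\varepsilon$-knowledge hypothesis. No solution-concept machinery beyond Theorem \ref{thm1} is needed.
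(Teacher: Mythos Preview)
Your argument is correct and in fact tighter than the paper's: you obtain $d(u,v)\leq 4\varepsilon$, well inside the stated $20\varepsilon$. The computations check out. With $q_{1}(c)=\delta_{\kappa(c)}$ and $q_{2}(k_{D})=u(\cdot\mid k=k_{D})$, the term with $k_{C}=k$ contributes $\sum_{k,d}\sum_{c:\kappa(c)\neq k}u(k,c,d)=\Pr_{u}(\kappa(c)\neq k)$ (since $v.q_{2}(k,k,d)=\marg_{K\times D}u(k,d)\geq q_{1}.u(k,k,d)$), and the terms with $k_{C}\neq k$ contribute the same quantity; the $\varepsilon$-knowledge hypothesis then gives $\Pr_{u}(\kappa(c)\neq k)\leq 1-(1-\varepsilon)^{2}\leq 2\varepsilon$. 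The symmetric construction handles the second minimization identically. The only cosmetic point worth noting is that $q_{2}(k_{D})=u(\cdot\mid k=k_{D})$ requires $\marg_{K}u(k_{D})>0$; for states of zero mass you may define $q_{2}$ arbitrarily without affecting the total variation.

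The paper takes a different and somewhat more roundabout route. It first enlarges $u$ to $u'$ by appending the labels $k_{C}=\kappa(c)$ and $k_{D}=\kappa(d)$ to each player's signal (so $d(u,u')=0$), then shows that under $u'$ the residual signal $c$ is $16\varepsilon$-conditionally independent of $(k,k_{D})$ given $k_{C}$, invokes Proposition~\ref{prop: joint info} to conclude $d(u',v')\leq 16\varepsilon$ for $v'=\marg_{K\times K_{C}\times K_{D}}u'$, and finally bounds $d(v',v)\leq 4\varepsilon$ by total variation. The triangle inequality then gives $20\varepsilon$. Your approach bypasses Proposition~\ref{prop: joint info} entirely and goes straight to Theorem~\ref{thm1} with explicit garblings, which is both shorter and yields the better constant. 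The paper's detour does have one conceptual payoff: it illustrates that approximate knowledge falls under the ``joint information is valueless in zero-sum games'' phenomenon of Proposition~\ref{prop: joint info}, whereas your argument treats the result as a stand-alone computation.
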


Thus, approximate knowledge structures are close to common knowledge
structures. The convergence of approximate knowledge type spaces to
the approximate common knowledge is a consequence of Theorem \ref{Thm: weak topology}.
The metric bound stated in the Proposition requires a separate (simple)
proof based on Proposition \ref{prop: joint info}. 
\begin{example}
\label{ex:Rubinstein}Consider a Rubinstein email-game information
structures $u^{p,\alpha}$ defined as follows: The state is $k=1$
with probability $p$ and $k=0$ with the remaining probability. If
the state is 0, both players receive signal 0. If the state is 1,
player 1 learns that and sends a message to the opponent, who after
receiving it, immediately sends it back. The message travels back
and forth. Each round, the message may get lost before reaching the
target with probability $\alpha$, in which case the process stops.
The signal of the player is the number of the times she sends a message
away.

Player 1 always knows the state. Player 2's first order belief attach
the probability of at least $\frac{1}{1+\alpha\frac{p}{1-p}}$ to
one of the states. Proposition \ref{prop: approximate knowledge}
implies that, when $\alpha\rightarrow0$, the Rubinstein's information
structures converge to the common knowledge of the state. 
\end{example}

\section{Large space of information structures\label{sec:Compactness}}

In this section, we show that the value-based distance does not have
a compact completion, or more precisely, it is not totally bounded
on $\CU^{*}$. Informally, the space of information structures is
large: it cannot be approximated by finitely many structures. We use
this result to provide a negative answer to a problem posed by J.F.
Mertens. 

\subsection{$(\CU^{*},d)$ is not totally bounded}

To focus attention, we assume that $K=\left\{ 0,1\right\} $. This
is without loss of generality, as our negative result for $K=\left\{ 0,1\right\} $
clearly implies a negative result for larger $K$. 

The next result finds $\varepsilon>0$ and an infinite sequence of
(finite) information structures such that all of them are at least
$\varepsilon$-away from each other in the value-based distance.
\begin{thm}
\label{thm3} There exists $\varepsilon>0$, even $N<\infty$, and
a Markov chain of distribution $\mu$ over sequences 
\[
k,c_{1},d_{1},c_{2},d_{2},c_{3},...
\]
where $C_{l}=D_{l}=\left\{ 1,...,N\right\} $ and $c_{l}\in C_{l},d_{l}\in D_{l}$
for each $l\geq1$, such that, if we define sets of signals $C^{l}=\prod_{p=1}^{l}C_{p},D^{l}=\prod_{p=1}^{l}D_{p}$
and information structures 
\[
u^{l}=\marg_{K\times C^{l}\times D^{l}}\mu,
\]
then, for any $p\neq l$, 
\[
d\left(u^{l},u^{p}\right)>\varepsilon.
\]
\end{thm}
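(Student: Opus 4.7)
The plan is to construct a specific Markov chain $\mu$ and a specific zero-sum payoff $g$ such that the values $\val(u^l, g)$ are spread out over $l$ in a uniform way. The underlying intuition is that, although player~1's first-order belief about $k$ must stabilize (by martingale convergence, once conditioned on the initial signal $c_1$ it is even exactly the same for every $l$ in a Markov chain), the higher-order beliefs about the opponent's information can keep changing at every level $l$, and zero-sum games of the "matching-pennies with state-dependent payoff" type are sensitive to exactly those higher-order beliefs.

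First I would set $K=\{0,1\}$, fix a moderately large alphabet $N$, and design transition kernels $T_{c\to d}$ and $T_{d\to c}$ on $\{1,\dots,N\}$ that are \emph{mixing but not ergodic enough to kill information rapidly}: concretely, kernels whose second eigenvalue is bounded away from both $0$ and $1$, so that signals deeper in the chain still carry non-negligible information about earlier signals while also injecting genuinely new randomness at each step. The initial transition $k\to c_1$ would be chosen so that $c_1$ is a strictly informative, non-revealing signal of $k$. A key feature of such a chain is that the joint hierarchy induced by $(c_1,\dots,c_l)$ for player 1 and $(d_1,\dots,d_l)$ for player 2 converges (along subsequences, in the weak topology of $\Pi$) to limits outside $\Pi_0$; by Theorem~\ref{Thm: weak topology} this already prevents the sequence $u^l$ from being Cauchy in the value-based distance.

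Second I would exhibit a concrete game $g$ whose value depends sensitively on the hierarchy level. A natural candidate is a matching-pennies-like game of moderate size where player 1 wins if she correctly anticipates a function of player 2's action together with the state, and player 2 wins otherwise. In such games, the value reflects the extent to which the players' optimal mixtures correlate with the state and with each other, and so it essentially reads off the joint distribution of posterior beliefs at every order. Using Theorem~\ref{thm1}, I would re-express $d(u^l,u^p)$ as $\min_{q_1,q_2}\|q_1.u^l-u^p.q_2\|$ and show that no pair of garblings can turn $u^l$ into $u^p$ without incurring total variation at least $\varepsilon$; this reduction lets me bypass computing the value of $g$ exactly and instead produce a lower bound by ruling out garblings.

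The main obstacle is precisely this uniform lower bound: I must rule out garblings for \emph{all} pairs $l\neq p$ simultaneously. The plan is to exploit the Markov structure as follows. Any garbling $q_1$ of player 1's information in $u^l$ can only erase or blur signals, but cannot create new signals correlated with the chain's future; similarly for $q_2$. Thus, when $p>l$, the "extra" signals $(c_{l+1},d_{l+1},\dots,c_p,d_p)$ in $u^p$ have a joint conditional distribution, given the earlier signals, that is pinned down by the kernels and cannot be matched by any garbling of $u^l$. I would quantify this by bounding below the total variation between the conditional distribution of $(c_p,d_p)$ given the earlier coordinates under $u^p$ and the corresponding conditional distribution under any garbling of $u^l$. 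The tuning of the kernels' spectrum ensures that this conditional-distribution gap does not vanish with $l,p$, giving a uniform $\varepsilon$. I expect the bookkeeping of conditional distributions along the chain --- ensuring that the lower bound does not depend on how large $l$ and $p$ are --- to be the principal technical difficulty, and to be the step where the precise choice of $N$ and of the transition kernels is pinned down.
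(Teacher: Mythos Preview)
Your proposal sketches a plausible heuristic but does not contain a workable proof, and it diverges substantially from the paper's argument in a way that leaves the central step unsupported.

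The paper does not attempt to rule out all garblings via Theorem~\ref{thm1}. Instead it builds, for each $p$, an explicit ``truth-telling'' game $g^{p}$: player~1 must report $c_{1}$ (incentivized by a proper scoring rule), and then both players report the rest of the chain, with a bonus $\varepsilon$ if the concatenated report lies in the support of $\mu$ and a penalty $\pm 5\varepsilon$ to the first deviator otherwise. The heart of the proof is a pair of incentive conditions (called $UI1$, $UI2$) on the transition matrix: roughly, for every history and every misreport, the conditional probability that the next coordinate keeps the reported sequence in the support is within $\alpha$ of $\tfrac12$. Under these conditions a short backward induction shows $\val(u^{l},g^{p})\geq\varepsilon$ whenever $l\geq p$ and $\val(u^{l},g^{l+1})\leq-\varepsilon$, so $d(u^{l},u^{p})\geq2\varepsilon$. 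The existence of a transition matrix satisfying $UI1$, $UI2$ is then obtained by the probabilistic method: a uniformly random $N/2$-regular bipartite relation works with high probability for large $N$.

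Your plan to control the second eigenvalue of a Markov kernel does not obviously produce the $UI$ conditions; those are statements about many specific conditional probabilities being close to $\tfrac12$ for \emph{every} choice of indices, not a single spectral bound. More seriously, your lower bound via $\min_{q_{1},q_{2}}\|q_{1}.u^{l}-u^{p}.q_{2}\|$ requires ruling out \emph{all} pairs of garblings uniformly in $l,p$, and the sentence ``garblings cannot create new signals correlated with the chain's future'' is not an argument: $q_{1}$ can inject arbitrary independent randomness into player~1's signal and $q_{2}$ can collapse player~2's signal in $u^{p}$ in coordinated ways, so you must quantify precisely why no such pair achieves small total variation. You give no mechanism for this, and the paper itself does not attempt it; the direct value computation via the reporting games is what makes the uniform bound tractable.
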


Recall that a consistent probability over the universal type space
is a probability distribution over hierarchies of beliefs that can
be induced by some (potentially uncountable) common prior type space,
and that the set $\Pi$ of consistent probabilities over the universal
belief space is compact under the topology of the weak convergence.
Theorem \ref{thm3} implies that $(\Pi_{0},d)$ and therefore $(\Pi,d)$
are not totally bounded metric spaces, so that $(\Pi,d)$ is not a
compact metric space. It follows that the value-based topology on
$\Pi$ is different from the weak topology, even if these topologies
coincide on $\Pi_{0}$ by Theorem \ref{Thm: weak topology}. (We elaborate
further on this point in section \ref{subsec:Weak-zero-sum-topology}.)

The proof, with an exception of one step that we describe below, is
constructive. The first part of the theorem describes the properties
of the construction. We define a Markov chain that starts with a state
followed by alternating signals for each player. In structure $u^{n}$,
player 1 observes signals $(c_{1},c_{2},....,c_{n})$ and player 2
observes $(d_{1},d_{2},....,d_{n})$. Thus, the sequence of structures
$u^{n}$ can be understood as fragments of a larger information structure,
where progressively, more and more information is revealed to each
player. The Theorem shows that the larger structure is not the limit
of its fragments in the value-based distance. In particular, there
is no analog of the martingale convergence theorem for the value-based
distance for such sequences.

This has to be contrasted with two other settings, where the martinagle
convergence holds. First, in the 1-player case, any sequence of information
structures in which the player is receiving more and more signals
converges for the distance $d_{1}$ (which can be proved using formula
\eqref{eq:d1weak}). Second, using martingale convergence theorems
and arguing in the proof of the denseness of $\Pi_{0}$ in $\Pi$
for the weak topology (see e.g. Thm III.3.1 and Lemma III.3.2 in \citet{mertens_sorin_zamir_2015}),
one shows that $\tilde{u}^{n}$ converges $\tilde{\mu}$ in $\Pi$
in the weak topology.

The Markov property means that (a) the state is independent from all
players information conditionally on $c^{1}$, and (b) each new piece
of information is independent from the previous pieces of information
conditionally on the most recent information of the other player.
As we show in the Appendix \ref{subsec:Higher-order-beliefs}, this
ensures that the $n$-th level hierarchy of beliefs of any type in
structure $u^{n}$ is preserved by all consistent types in structures
$u^{m}$ for $m\geq n$. Theorem \ref{thm3} exhibits therefore a
situation in which higher-order beliefs do not have diminishing importance
for the value-based distance. In particular, it shows that the knowledge
of the $n$-th level hierarchy of beliefs for any arbitrarily high
$n$ is not sufficient to play $\varepsilon$-optimally in any finite
zero-sum game $g\in\CG$.


\subsection{Comments on the proof}

We briefly sketch the main ideas behind the proof. We fix $\alpha<\frac{1}{25}$.
We show that we can find even $N$ high enough and a set $S\subseteq\left\{ 1,...,N\right\} ^{2}$
with certain mixing properties: 
\begin{align*}
\left|\left\{ j:\left(i,j\right)\in S\right\} \right| & =\frac{N}{2},\text{ for each }i,\\
\left|\left\{ i:\left(i,j\right)\in S\right\} \right| & \sim\frac{N}{2},\text{ for each }j,\\
\left|\left\{ i:\left(i,j\right),\left(i,j^{\prime}\right)\in S\right\} \right| & \sim\frac{N}{4},\text{ for each }j,j{}^{\prime},\\
\left|\left\{ i:\left(i,j\right),\left(i,j^{\prime}\right),\left(l,i\right)\in S\right\} \right| & \sim\frac{N}{8},\text{ for each }j,j{}^{\prime},l,
\end{align*}
etc. The ``$\sim$'' means that the left-hand side is within $\alpha$-related
distance to the right-hand side. Altogether, there are 8 properties
of this sort (see Appendix \ref{subsec:Existence-of-Markov chain}
for details.) The properties essentially mean that various sections
of $S$ are ``uncorrelated'' with each other.

Although it might be possible to directly construct $S$ with the
required properties, we are unable to do so. Instead, we show the
existence of set $S$ using the probabilistic method of P. Erd\H{o}s
(for a general overview of the method, see \citet{alon_probabilistic_2008}).
Suppose that the sets $S\left(i\right)$ for $i=1,...,N$ are chosen
independently and uniformly from all $\frac{N}{2}$-element subsets
of $\left\{ 1,...,N\right\} $. We show that if $N$ is sufficiently
large, then the set $S=\left\{ \left(i,j\right):j\in S\left(i\right)\right\} $
satisfies the required properties with positive probability, proving
thus that a set satisfying these properties exists. Our method of
the proof is not particularly careful about the optimal $N$ and a
rough estimate suggests that it needs $N\geq10^{8}$.

Given $S$, we construct the Markov chain of distribution $\mu$.
First, $c_{1}$ is chosen from the uniform distribution on $\left\{ 1,...,N\right\} $.
Next, we choose $k=1$ with probability $\frac{c_{1}}{N+1}$ and $k=0$
with the remaining probability. Next, inductively, for each $l\geq1$,
we choose 
\begin{itemize}
\item $d_{l}$ uniformly from set $S\left(c_{l}\right)=\left\{ j:\left(c_{l},j\right)\in S\right\} $
and conditionally independently from $k,...,d_{l-1}$ given $c_{l}$,
and 
\item $c_{l+1}$ uniformly from set $S\left(d_{l}\right)$ and conditionally
independently from $k,...,c_{l}$ given $d_{l}$. 
\end{itemize}
Thus, the Markov chain becomes time-homogenuous after the choice of
state $k$ and $c_{1}$. This finishes the construction of the information
structures.

To provide a lower bound on the distance between different information
structures, we construct a sequence of games. In game $g^{n}$, player
1 is supposed to reveal the first $n$ pieces of her information;
player 2 reveals the first $n-1$ pieces. The payoffs are such that
it is a dominant strategy for player 1 to precisely reveal his first
order belief about the state, which amounts to report truthfully $c_{1}$.
Further than that, we verify whether the sequence of reports 
\[
\left(\hat{c}_{1},\hat{d}_{1},...,\hat{c}_{n-1},\hat{d}_{n-1},\hat{c}_{n}\right)
\]
belongs to the support of the distribution of the Markov chain $\mu$.
If it does, then player 1 receives payoff $\varepsilon\sim\frac{1}{10\left(N+1\right)^{2}}$.
If it doesn't, we identify the first report in the sequence that deviates
from the support. The responsible player is punished with payoff $-5\varepsilon$
(and the opponent receives $5\varepsilon$).

The payoffs and the mixing properties of matrix $S$ ensure that players
have incentives to report their information truthfully. We check it
formally, and we show that that if $l>p$, then 
\[
d\left(u^{l},u^{p}\right)\geq\val\left(u^{l},g^{p+1}\right)-\val\left(u^{p},g^{p+1}\right)\geq2\varepsilon.
\]

\subsection{An open problem raised by Mertens}

\label{subsec:Mertens}


Because the $n$-th level hierarchies of beliefs become constant as
we move along the sequence $u^{n}$, it must be\footnote{ As mentioned earlier, this convergence can also be proved by using
martingale convergence theorems. } that the sequence $\tilde{u}^{n}$ converges weakly in $\Pi$ to
the limit 
\[
\tilde{u}^{n}\rightarrow\tilde{\mu}.
\]
(The limit is the consistent probability obtained from the type space
over infinite sequences with common prior $\mu$.) Despite the convergence,
our Theorem shows that 
\[
\limsup_{n}\sup_{g\in\CG}\left|\val\left(\mu,g\right)-\val\left(u^{n},g\right)\right|=\limsup_{n}d(\tilde{u}^{n},\tilde{\mu})\geq\varepsilon.
\]
In particular, the family of functions $(u\mapsto\val(u,g))_{g\in\CG}$
is not equicontinuous on $\Pi$ for the weak topology.

This answers negatively the second of the three problems\footnote{Problem 1 asked for the convergence of the value functions $(v_{\lambda})_{\lambda}$
and $(v_{n})_{n}$ in a general zero-sum repeated game with finitely
many states, actions and signals, and was disproved in \citet{ziliotto_zero-sum_2016}.
Problem 3 asks if the existence of a uniform value follows from the
uniform convergence of $(v_{\lambda})$, and was disproved in \citet{lehrer_discounting_1994}
for 1-player games, see also \citet{monderer_asymptotic_1993}.} posed by \citet{mertens_repeated_1986} in his Repeated Games survey
from ICM: ``This equicontinuity or Lispchitz property character is
crucial in many papers...''. Precisely, if this family was equicontinuous,
a direct application of Ascoli's theorem would have implied that the
family $(u\mapsto\val(u,g))_{g\in\CG}$ is totally bounded in the
space of bounded functions over $\Pi$ endowed with the uniform norm.

The importance of the Mertens question comes from its application
to limit theorems in the repeated games. Consider a general zero-sum
repeated game (stochastic game, with incomplete information, and imperfect
monitoring), given by a transition $q:K\times I\times J\longrightarrow\Delta(K\times C\times D)$,
a payoff function $g:K\times I\times J\longrightarrow[-1,1]$ and
an initial probability $u_{0}$ in $\Delta(K\times C\times D)$, where
$K$, $I$, $J$, $C$ and $D$ are finite sets. Before stage 1, an
initial state $k_{1}$ in $K$ and initial private signals $c_{1}\in C$
for player 1, and $d_{1}\in D$ for player 2, are selected according
to $u_{0}$. Then at each stage $t$, simultaneously player 1 chooses
an action $i_{t}$ in $I$ and player 2 chooses and action $j-t$
in $J$, the stage payoff is $g(k_{t},i_{t},j_{t})$, an element $(k_{t+1},c_{t+1},d_{t+1})$
is selected according to $q(k_{t},i_{t},j_{t})$, the new state is
$k_{t+1}$, player 1 receives the signal $c_{t+1}$, player 2 the
signal $d_{t+1}$, and the play proceeds to stage $t+1$.

An appropriate state variable is $u_{t}\in\Delta\left(K\times\left(I^{t-1}\times C^{t}\right)\times\left(J^{t-1}\times D^{t}\right)\right)$,
representing the current state in $K$ and the finite sequence of
private actions and signals previously received by each player. As
a consequence, a recursive formula can be explicitly written as follows:
for all discount factors $\delta<1$ and all $u\in\Delta\left(K\times C_{0}\times D_{0}\right)$,
\begin{eqnarray*}
v_{\delta}(u;g,q) & = & \max_{\sigma_{1}:C_{0}\rightarrow\Delta(I)}\min_{\sigma_{2}:D_{0}\rightarrow\Delta(J)}\left(1-\delta\right)\gamma_{u,g}(\sigma_{1},\sigma_{2})+\delta v_{\delta}(F(u,\sigma_{1},\sigma_{2};q);g,q),\\
 & = & \min_{\sigma_{2}:D_{0}\rightarrow\Delta(J)}\max_{\sigma_{1}:C_{0}\rightarrow\Delta(I)}\left(1-\delta\right)\gamma_{u,g}(\sigma_{1},\sigma_{2})+\delta v_{\delta}(F(u,\sigma_{1},\sigma_{2};q);g,q),
\end{eqnarray*}
where $\sigma_{1},\sigma_{2}$ are strategies in the game, $\gamma_{u,g}(\sigma_{1},\sigma_{2})$
is the stage game payoff, and $F(u,\sigma_{1},\sigma_{2};q)\in\Delta\left(K\times(C_{0}\times I\times C)\times(D_{0}\times J\times D)\right)$
is the information structure obtained tomorrow, given today's strategies,
the state transition and signal function $q$. Formally, 
\[
F\left(u,\sigma_{1},\sigma_{2};q\right)(k,c_{0},i,c,d_{0},j,d)=\sum_{k^{\prime}\in K}u\left(k^{\prime},c_{0},d_{0}\right)\sigma_{1}\left(i|c_{0}\right)\sigma_{2}\left(j|d_{0}\right)q\left(k|k^{\prime},i,j\right).
\]

\begin{prop}
\label{prop_precompact_sto} Suppose that the family of information
structures $\CU_{0}\subseteq\CU$ is totally bounded for the value-based
distance $d$. Suppose that for some stochastic game $\left(g,q\right)$,
any strategies $\sigma_{1},\sigma_{2}$, we have 
\[
u\in\CU_{0}\Longrightarrow F\left(u,\sigma_{1},\sigma_{2};q\right)\in\CU_{0}.
\]
Then, the family $(v_{\delta}\left(.;g,q\right))_{\delta\in[0,1)}$
is totally bounded in the space of bounded functions on $\CU_{0}$ endowed
with the uniform norm. 
\end{prop}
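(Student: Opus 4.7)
The plan is to show that each value function $v_\delta(\cdot;g,q)$ is $1$-Lipschitz on $\CU_0$ for the value-based distance $d$, uniformly in $\delta\in[0,1)$, and then invoke the Arzel\`a--Ascoli theorem on the compact completion of $\CU_0$.

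For the Lipschitz bound, I work through the finite-horizon truncations
\[
v_\delta^T(u;g,q):=\max_{\sigma_1}\min_{\sigma_2}\E\Big[\sum_{t=1}^T(1-\delta)\delta^{t-1}g(k_t,i_t,j_t)\Big].
\]
The key observation is that $v_\delta^T$ is itself the value of a one-shot zero-sum Bayesian game on $u$. Indeed, by perfect recall and Kuhn's theorem, a behavior strategy of player $1$ in the $T$-stage game can be encoded as an initial choice, depending on her signal $c_1$, of a pure continuation plan $\pi_1$ in a finite set $\hat I_T$ of functions from P1-histories (that no longer involve $c_1$) into $I$; the analogous statement holds for $\hat J_T$ and player $2$. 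The discounted payoff
\[
G_T(k_1,\pi_1,\pi_2):=\E\Big[\sum_{t=1}^T(1-\delta)\delta^{t-1}g(k_t,i_t,j_t)\,\Big|\,k_1,\pi_1,\pi_2\Big]\in[-1,1]
\]
depends only on $(k_1,\pi_1,\pi_2)$, not on $c_1,d_1$, because once the plans and $k_1$ are fixed the trajectory is driven by $q$ alone. Hence $v_\delta^T(u;g,q)=\val(u,G_T)$ with $G_T\in\CG$, and the definition of $d$ yields $|v_\delta^T(u;g,q)-v_\delta^T(u';g,q)|\le d(u,u')$. Since $|v_\delta-v_\delta^T|\le\delta^T$ uniformly in $u$ for each $\delta<1$, sending $T\to\infty$ gives
\[
|v_\delta(u;g,q)-v_\delta(u';g,q)|\le d(u,u'),\qquad u,u'\in\CU_0,\ \delta\in[0,1).
\]

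With the uniform $1$-Lipschitz bound in hand, the conclusion is standard. Because $(\CU_0,d)$ is totally bounded, its metric completion $\overline{\CU_0}$ is compact. Each $v_\delta|_{\CU_0}$ extends uniquely to a $1$-Lipschitz, $[-1,1]$-valued function $\tilde v_\delta$ on $\overline{\CU_0}$. The family $\{\tilde v_\delta:\delta\in[0,1)\}$ is then uniformly bounded and equi-Lipschitz, so by Arzel\`a--Ascoli it is relatively compact in $C(\overline{\CU_0})$ under the uniform norm, and in particular totally bounded. Restriction to $\CU_0$ preserves total boundedness, which gives the desired statement.

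The main point requiring care is the identification $v_\delta^T=\val(\cdot,G_T)$ when $u$ has a countably infinite signal space: in that case $\hat I_T$ and $\hat J_T$ may be uncountable and $G_T$ is not literally in $\CG$. I would circumvent this by truncating $u$ to the first $L$ most-probable signals of each player, running the argument on the resulting finite structure, and passing to the limit using that $v_\delta^T$ is $\|\cdot\|$-continuous and that $d\le\|\cdot\|$. The invariance hypothesis $F(u,\sigma_1,\sigma_2;q)\in\CU_0$ is used to guarantee that $v_\delta$ is well-defined on $\CU_0$ as the fixed point of the Shapley operator staying within $\CU_0$, so that the family $\{v_\delta|_{\CU_0}\}_{\delta\in[0,1)}$ is genuinely the object whose total boundedness we are establishing.
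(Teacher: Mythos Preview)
Your proof is correct and follows the same route as the paper: show that each $v_\delta$ is $1$-Lipschitz for $d$, pass to the compact completion of $(\CU_0,d)$, and apply Ascoli. The paper compresses the Lipschitz step into the phrase ``by construction''; your reduction of the $T$-stage game to a one-shot Bayesian game $G_T\in\CG$ is exactly the standard way to unpack that phrase, so the two arguments are essentially identical.

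Two minor remarks. First, your worry that $\hat I_T$ may be uncountable when $u$ has an infinite initial signal space is unfounded: a plan $\pi_1$ is a map from histories $(i_1,c_2,i_2,\dots,c_t)$ where the subsequent signals $c_2,\dots,c_t$ lie in the \emph{finite} alphabet $C$ of the transition $q$, not in the initial signal space $C_0$ of $u$. The initial signal $c_1$ has already been absorbed into the choice of plan. Hence $\hat I_T$ is finite regardless of $C_0$, $G_T\in\CG$ directly, and your truncation workaround is unnecessary. Second, neither your argument nor the paper's actually uses the invariance hypothesis $F(u,\sigma_1,\sigma_2;q)\in\CU_0$ for the Lipschitz bound or the Ascoli step; that hypothesis is there for the subsequent corollary on convergence, where one needs the Shapley dynamics to remain inside the totally bounded set.
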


\begin{proof}
Let $\CU_{0}^{*}\subset\CU^{*}$ be the set of equivalence classes
of information structures in $\CU_{0}$. The metric space $(\CU_{0}^{*},d)$
being totally bounded, its completion $(\overline{\CU}_{0}^{*},d)$
is a compact metric space. The functions $v_{\delta}$ are, by construction,
1-Lipschitz from $(\CU_{0}^{*},d)$ to $[-1,1]$. Hence any function
$v_{\delta}$ admits a unique $1$-Lipschitz extension on $\overline{\CU}_{0}^{*}$.
Ascoli's theorem implies that the family $(v_{\delta})_{\delta\in[0,1)}$
is totally bounded in the space of bounded functions on $(\overline{\CU}_{0},d)$
endowed with the uniform norm, which implies the result. 
\end{proof}
The above result might seem quite weak, but it is the first necessary
step for proving that $v_{\delta}$ converges uniformly when $\delta$
goes to $1$. This condition was shown to be sufficient for general
Markov Decision Problems in \citet{renault2011}, and his main theorem
(together with the Tauberian theorem of \citet{lehrer_sorin}) implies
quite directly the following corollary. 
\begin{cor}
In addition to the assumptions of Proposition \ref{prop_precompact_sto},
assume that the transition $q$ of the stochastic game does not depend
on the action of player $2$, then the family $v_{\delta}$ converges
uniformly to a limit $v$ on $\CU_{0}$ as $\delta$ goes to $1$. 
\end{cor}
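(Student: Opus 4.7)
The plan is to show that, under the assumption that the transition $q$ does not depend on $j$, the stochastic game reduces to a Markov Decision Problem on the space of information structures, and then to invoke Renault's (2011) uniform-value theorem together with Proposition \ref{prop_precompact_sto} and the Tauberian theorem of Lehrer and Sorin.

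First, I would verify the MDP reduction. When $q$ is independent of $j$, the explicit formula for $F$ factorizes: the marginal of $F(u,\sigma_1,\sigma_2;q)$ on $K\times(C_0\times I\times C)\times(D_0\times D)$ depends only on $(u,\sigma_1)$, while the role of $\sigma_2$ is limited to producing a signal $j$ for player $2$ distributed as a randomized function of her existing signal $d_0$. Because any such signal is redundant from player $2$'s point of view (she can reproduce it by her own randomization), the information structure $F(u,\sigma_1,\sigma_2;q)$ is equivalent in $\CU^{*}$ to an information structure $\tilde F(u,\sigma_1)$ depending only on $\sigma_1$. The Bellman recursion therefore reads
\[
v_\delta(u) \;=\; \max_{\sigma_1}\Bigl[(1-\delta)\,r(u,\sigma_1) + \delta\, v_\delta\bigl(\tilde F(u,\sigma_1)\bigr)\Bigr],
\]
with $r(u,\sigma_1)=\min_{\sigma_2}\gamma_{u,g}(\sigma_1,\sigma_2)$, which is the Bellman equation of a one-player MDP on the state space $\CU_0^{*}\subseteq\CU^{*}$.

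Second, I would invoke Proposition \ref{prop_precompact_sto}. Under the stated invariance hypothesis on $\CU_0$, the family $(v_\delta)_{\delta\in[0,1)}$ is totally bounded in the uniform norm on $\CU_0$. Since each $v_\delta$ is $1$-Lipschitz for the value-based distance (being the value of a well-posed zero-sum repeated game, so that by the very definition of $d$ one has $|v_\delta(u)-v_\delta(u')|\leq d(u,u')$), the family extends to an equicontinuous, uniformly bounded family of $1$-Lipschitz functions on the compact completion $\overline{\CU_0^{*}}$.

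Third, I would apply the main theorem of \citet{renault2011}, which asserts that for an MDP on a compact metric state space the total boundedness of the family of discounted values is sufficient for its uniform convergence as $\delta\to 1$; the Tauberian theorem of \citet{lehrer_sorin} is then used to reconcile Abel and Ces\`aro averages so that the same limit is obtained for the $n$-stage averages. The principal obstacle is precisely the MDP reduction of the first step: one must carefully verify that player $2$'s choice of $\sigma_2$ adds no genuine dynamic information to the future state beyond a randomization of her own signal, so that the Bellman recursion can be closed in $(u,\sigma_1)$ alone. Once this reduction is in place, Proposition \ref{prop_precompact_sto} and Renault's theorem combine essentially mechanically to produce the uniform limit $v$ of $(v_\delta)$ on $\CU_0$.
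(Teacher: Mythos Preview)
Your proposal is correct and follows essentially the same route as the paper, which simply asserts that the corollary follows ``quite directly'' from the main theorem of \citet{renault2011} combined with the Tauberian theorem of \citet{lehrer_sorin}. You have usefully spelled out the MDP reduction that the paper leaves implicit: namely, that when $q$ does not depend on $j$, the signal $j$ added to player~2's history is a randomized function of her existing signal and hence redundant in $\CU^{*}$, so the Bellman recursion closes in $(u,\sigma_{1})$ alone with stage reward $r(u,\sigma_{1})=\min_{\sigma_{2}}\gamma_{u,g}(\sigma_{1},\sigma_{2})$.
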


The existence of a limit value attracted a lot of attention since
the first results by \citet{aumann_repeated_1995} and \citet{mertens1971value}
for repeated games, and by \citet{bewley1976asymptotic} for stochastic
games. Once the fact that the family of value functions is totally
bounded is established, the existence of the limit value is typically
obtained by showing that there is at most one accumulation point of
the family $(v_{\delta})$, e.g. by showing that any accumulation
point satisfies a system of variational inequalities admitting at
most one solution (see e.g. the survey \citet{laraki2015advances}
and footnote \ref{foot_zerosum} for related works).

Even if the answer to Mertens' problem is negative, identifying totally
bounded subsets of $(\CU^{*},d)$ might have applications for limit
theorems for repeated games for which the hypothesis of Proposition
\ref{prop_precompact_sto} applies. The next section illustrate with
our notation several known such subsets.

\subsection{Totally bounded subsets\label{subsec:Compact-subsets}}

In the 1-player case, the characterization \ref{eq:d1weak} implies
that the set of equivalence classes in $\CU$ inducing the same distribution
of first order belief of player $1$, equipped with the distance $d_{1}$
can be identified with the subset of $(\Delta(\Delta(K)),\tilde{d}_{1})$
of probabilities with countable support, where $\tilde{d}_{1}$ is
the distance given by right-hand side of \eqref{eq:d1weak}, which
metricizes the weak topology on $\Delta(\Delta(K))$. The latter being
weakly compact, we deduce that $(\CU,d_{1})$ is totally bounded.

For 2 players, every particular subspace $\CU_{i}$ for $i=1,...,4$
listed below has been shown to be totally bounded, each time by identifying
$\CU_{i}^{*}$ with a subset of a weakly compact space of probabilities.
We only mention the relevant variables used for the identification
and the corresponding space of probabilities.


$\bullet$ Set $\CU_{1}$ of information structures where both players
receive the same signal: $\CU_{1}^{*}$ can be identified with $\Delta_{c}(\Delta(K))$\footnote{ We use the notation $\Delta_{c}(X)$ for the probabilities with at
most countable support on $X$}. Here given $u$ in $\CU_{1}$, what matters is the induced law on
the common a posteriori of the players on $K$. Another characterization
of $d(u,v)$ has been obtained in \citet{renault_venel_2016}. let
$D_{1}$ be the subset of 1-Lipschitz functions from $\Delta(K)$
to $\R$ satisfying 
\[
\forall p,q\in\Delta(K),\forall a,b\geq0,\;af(p)-bf(q)\leq\|ap-bq\|_{1}.
\]
We have : 
\[
\forall u,v\in\CU_{1},\;d(u,v)=\sup_{f\in D_{1}}\left(\int_{p\in\Delta(K)}f(p)d{\tilde{u}}(p)-\int_{p\in\Delta(K)}f(p)d{\tilde{v}}(p)\right).
\]

$\bullet$ Set $\CU_{2}$ of information structures $u\in\Delta(K\times C\times D)$
where player 1 knows the signal of player 2, i.e. there exists a map
$\kappa:C\rightarrow D$ such that $u(\{d=\kappa(c)\})=1$: $\CU_{2}^{*}$
can be identified with $\Delta_{c}(\Delta_{c}(\Delta(K)))$ (see \citet{mertens_repeated_1986},
\citet{gensbittel2014}). Given $u\in\CU_{2}$, the relevant variable
is the induced distribution of the second order beliefs of player
$2$.

$\bullet$ Set $\CU_{3}$ of independent information structures: $\CU_{3}$
is the set of $u\in\CU$ with conditionally independent information.
$\CU_{3}^{*}$ can be identified with $\Delta_{c}(\Delta(K)\times\Delta(K))$.
Given $u$ in $\CU_{3}$, the relevant variable is the pair of first
order beliefs of the players.

\section{Relation to other topologies on $\CU$\label{sec:Relation-to-other}}

\subsection{Payoff-based distance with non-zero-sum games\label{subsec:Payoff distance NZS}}

It seems natural to ask about a version of distance (\ref{eq:ZS distance})
where we take supremum over all bounded games, not necessarily zero-sum.
In this subsection, we are going to show that such an approach cannot
be useful because so defined measure is equal to its maximum possible
value for almost every pair of information structures.

Formally, a \emph{non-zero sum payoff function} is a map $g:K\times I\times J\rightarrow\left[-1,1\right]^{2}$
where $I,J$ are finite sets. Let $\text{Eq}\left(u,g\right)\subseteq\R^{2}$
be the set of Bayesian Nash payoffs in game $g$ on information structure
$u$. Assume that the space $\R^{2}$ is equipped with the maximum
norm $d_{\max}\left(x,y\right)=\max_{i=1,2}\left|x_{i}-y_{i}\right|$
and the space of compact subsets of $\R^{2}$ with the induced Hausdorff
distance $d_{\max}^{H}$. Let 
\[
d_{NZS}\left(u,v\right)=\sup_{g\text{ is a non-zero-sum payoff function}}d_{\max}^{H}\left(\text{Eq}\left(u,g\right),\text{Eq}\left(v,g\right)\right).
\]
Then, clearly as in our original definition (\ref{eq:ZS distance}),
$0\leq d_{NZS}\left(u,v\right)\leq2.$

Contrary to the value in the zero-sum game, the Bayesian Nash Equilibrium
payoffs on information structure $u$ cannot be factorized through
the distribution $\tilde{u}\in\Delta(\Theta^{1})$ over the hierarchies
of beliefs induced by $u$. For this reason, we only restrict our
analysis to information structures that are non-redundant, or equivalently
information structures induced by a coherent probability with countable
support in $\Pi_{0}$. That is not because we think that the redundant
information is not relevant (it is relevant for Bayesian Nash equilibrium,
see multiple examples in \citet{dekel_topologies_2006}, \citet{ElyPeski06}),
but because the dependence of the BNE on the redundant information
is not yet well-understood (see \citet{sadzik2008beliefs})\footnote{An alternative approach would be to take an equilibrium solution concept
that can be factorized through the hierarchies of beliefs. An example
is Bayes Correlated Equilibrium from \citet{bergemann_bayes_2015}.}.

Let $u\in\Delta\left(K\times C\times D\right)$ be an information
structure. A subset $A\subseteq K\times I\times J$ is a \emph{proper
common knowledge event} if $u\left(A\right)\in\left(0,1\right)$ and
for each signal $s\in C\cup D$, $u\left(A|s\right)\in\left\{ 0,1\right\} $.
An information structure is \emph{simple} if it does not have a proper
common knowledge component. Each finite non-redundant information
structure $u$ has a representation as a convex combination of (non-redundant)
simple information structures $u=\sum_{\alpha}p_{\alpha}u_{\alpha}$,
where $\sum p_{\alpha}=1,p_{\alpha}\geq0$, and $p_{\alpha}>0$ for
at most finitely many $\alpha$. 
\begin{thm}
\label{thm:NZS distance}Suppose that $u,v$ are finite and non-redundant
information structures. If $u$ and $v$ are simple, then 
\begin{align*}
d_{NZS}\left(u,v\right) & =\begin{cases}
0, & \text{if }\tilde{u}=\tilde{v},\\
2 & \text{otherwise. }
\end{cases}
\end{align*}
More generally, suppose that $u=\sum p_{\alpha}u_{\alpha}$ and $v=\sum q_{\alpha}v_{\alpha}$
are the decompositions into simple information structures. We can
always choose the decompositions so that $\tilde{u}_{\alpha}=\tilde{v}_{\alpha}$
for each $\alpha$. Then, 
\[
d_{NZS}\left(u,v\right)=\sum_{\alpha}\left|p_{\alpha}-q_{\alpha}\right|.
\]
\end{thm}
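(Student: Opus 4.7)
The plan is to reduce the statement to a decomposition of the Bayesian Nash equilibrium payoff correspondence along common knowledge components, then match an upper and a lower bound; the simple case follows from the general formula by the appropriate singleton or two-element decomposition analysis (two simple structures with the same hierarchy give $p_1 = q_1 = 1$ hence distance $0$, and with distinct hierarchies give $p = (1,0)$, $q = (0,1)$ hence distance $2$). The first step I would establish is that for any non-zero-sum game $g$ and any finite non-redundant $u = \sum_\alpha p_\alpha u_\alpha$,
\[ \text{Eq}(u, g) = \left\{ \sum_\alpha p_\alpha e_\alpha : e_\alpha \in \text{Eq}(u_\alpha, g) \right\}. \]
The $\supseteq$ inclusion is immediate since each simple component is a common knowledge event, so independent BNEs on each component glue to a BNE of $\Gamma(u, g)$. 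The $\subseteq$ inclusion uses that Bayesian optimality is a type-wise condition; the restriction of any BNE of $\Gamma(u, g)$ to the common knowledge event supporting $u_\alpha$ is itself a BNE of $\Gamma(u_\alpha, g)$, and expected payoffs decompose linearly. Since non-redundant simple structures with the same hierarchy distribution are isomorphic as common prior type spaces, this further gives $\text{Eq}(u_\alpha, g) = \text{Eq}(v_\alpha, g)$ whenever $\tilde{u}_\alpha = \tilde{v}_\alpha$, which is the matching used throughout.

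For the upper bound, fix any $g$ and any $e = \sum_\alpha p_\alpha e_\alpha \in \text{Eq}(u, g)$ with $e_\alpha \in \text{Eq}(u_\alpha, g) = \text{Eq}(v_\alpha, g)$; then $f := \sum_\alpha q_\alpha e_\alpha$ lies in $\text{Eq}(v, g)$, and since $\|e_\alpha\|_\infty \leq 1$,
\[ \|e - f\|_\infty \leq \sum_\alpha |p_\alpha - q_\alpha|. \]
A symmetric estimate yields the bound on the other Hausdorff direction; taking $\sup_g$ gives $d_{NZS}(u, v) \leq \sum_\alpha |p_\alpha - q_\alpha|$.

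For the lower bound, for each $\varepsilon > 0$ I would construct a game $g^\varepsilon$ achieving Hausdorff distance at least $\sum_\alpha |p_\alpha - q_\alpha| - \varepsilon$. The construction combines two sub-games. The first is a state-independent coordination game with payoffs $g_i(a, a) = 1$, $g_i(b, b) = -1$, and off-diagonal entries $-1$; simplicity of each component rules out BNEs that split the component (the required no-split conditions force $C_1 \cup D_2$ or its complement to be a proper common knowledge event, which is excluded by simplicity), so the only BNEs on any simple $u_\alpha$ are both-$a$ and both-$b$, with payoffs $(1, 1)$ and $(-1, -1)$ respectively. The second is a small auxiliary component-identifying sub-game, which exists because the value of zero-sum games separates non-redundant hierarchy classes (Lemma 41 of \citet{gossner_value_2001}); scaling it by a factor $O(\varepsilon)$ pins down the intended equilibrium on each component without destroying the coordinated BNEs. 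Choosing the labels $\epsilon_\alpha = \text{sign}(p_\alpha - q_\alpha)$ then yields, via a direct Minkowski-sum calculation, a BNE payoff in $\text{Eq}(u, g^\varepsilon)$ at $\ell^\infty$-distance $\sum_\alpha |p_\alpha - q_\alpha| - O(\varepsilon)$ from every BNE payoff in $\text{Eq}(v, g^\varepsilon)$.

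The main obstacle is the structural selection step underlying the lower bound: verifying that the combined game has exactly the intended BNEs on each component and no spurious cross-component pooling equilibria that would shrink the Hausdorff distance. I would address this by a two-scale argument, keeping the coordination incentives at $O(1)$ while the separating perturbation is $O(\varepsilon)$, so that deviations from the intended profile are strictly dominated on each simple component, and then verify incentive compatibility by a case analysis of the per-component pure profile.
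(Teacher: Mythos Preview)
Your upper bound and the decomposition lemma $\text{Eq}(u,g)=\{\sum_\alpha p_\alpha e_\alpha : e_\alpha\in\text{Eq}(u_\alpha,g)\}$ are correct and match the paper's argument (the paper uses this decomposition implicitly).

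The lower bound has a genuine gap. Your state-independent coordination game yields the \emph{same} equilibrium-payoff set $\{(1,1),(-1,-1)\}$ on every simple component (your no-split argument via common knowledge is correct, but it leaves two equilibria, not one). Hence by your own decomposition
\[
\text{Eq}(u,g)=\Bigl\{\bigl(\textstyle\sum_\alpha p_\alpha\epsilon_\alpha\bigr)(1,1):\epsilon_\alpha\in\{-1,1\}\Bigr\},
\]
and likewise for $v$. These two sets may coincide even when $\sum_\alpha|p_\alpha-q_\alpha|>0$: with two components and $p=(0.6,0.4)$, $q=(0.4,0.6)$, both equal $\{-1,-0.2,0.2,1\}\cdot(1,1)$, so the Hausdorff distance is $0$ while $\sum|p_\alpha-q_\alpha|=0.4$. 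An $O(\varepsilon)$ separating perturbation cannot repair this: either the weak equilibrium $(b,b)$ survives on each component and the swap symmetry between components persists at order $1$, or the perturbation breaks the indifference at $(b,b)$ and destroys it on \emph{every} component simultaneously, leaving the single payoff $\approx(1,1)$ on both $u$ and $v$. In neither case is there a component-dependent selection; already in the basic simple case $\tilde u\neq\tilde v$ your game gives Hausdorff distance $O(\varepsilon)$, not $2$.

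The paper avoids multiplicity altogether by engineering a \emph{unique} rationalizable outcome. It starts from a finite game in which reporting one's own hierarchy is the unique rationalizable action for player $1$; since distinct simple components have disjoint hierarchy supports (Lemma III.2.7 in Mertens--Sorin--Zamir), one can then give player $2$ a strictly dominant action that matches whether player $1$'s report lies in $\bigcup_{\alpha\in A}\text{supp}\,\tilde u_\alpha$, where $A=\{\alpha:p_\alpha>q_\alpha\}$. Scaling this entire game by $\varepsilon$ and adding $\pm(1-\varepsilon)$ to player $1$'s payoff according to player $2$'s match produces a game whose unique BNE payoff on component $\alpha$ has first coordinate within $\varepsilon$ of $+1$ if $\alpha\in A$ and of $-1$ otherwise. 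Now $\text{Eq}(u,g^\varepsilon)$ and $\text{Eq}(v,g^\varepsilon)$ are near-singletons whose first coordinates differ by at least $(1-\varepsilon)\sum_\alpha|p_\alpha-q_\alpha|$. The essential point your construction misses is that the component identification must operate at scale $O(1)$ and must pin down a single outcome per component; a state-independent coordination device cannot do this because it does not see which component it is being played on.
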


The distance between the two non-redundant simple information structures
is binary, either 0 if the information structures are equivalent,
or 2 if they are not. In particular, the distance between most of
the simple information structures is equal to its maximum possible
value 2. The distance $d_{NZS}$ between two non-redundant, but not
necessarily simple information structures depends on how similar is
their decomposition into the simple components.

The proof in the case of two non-redundant and simple structures $u$
and $v$ is very simple. Let $\tilde{u}\neq\tilde{v}$. First, it
is well-known that there exist a finite game $g:K\times I\times J\rightarrow[-1,1]^{2}$
in which each type of player 1 in the support of $\tilde{u}$ and
$\tilde{v}$ reports her hierarchy of beliefs as the unique rationalizable
action. Second, Lemma III.2.7 in \citet{mertens_sorin_zamir_2015}
(or corollary 4.7 in \citet{mertzam:85}) shows that the supports
of distributions $\tilde{u}$ and $\tilde{v}$ must be disjoint (it
is also a consequence of the result by \citet{samet1998iterated}).
Thus, we can construct a game, in which, additionally to the first
game, player 2 chooses between two actions $\left\{ u,v\right\} $
which influences her payoff only, in such a way that it is optimal
for her to match the information structure to which player 1's reported
type belongs. Finally, we multiply the so obtained game by $\varepsilon>0$
and construct a new game, in which, additionally, player 1 receives
payoff $1-\varepsilon$ if player 2 chooses $u$ and a payoff of $-1+\varepsilon$
if player 2 chooses $v$. Hence, the payoff distance between the two
information structures is at least $2-\varepsilon$, where $\varepsilon$
is arbitrary small.

\subsection{Pointwise topology\label{subsec:Weak-zero-sum-topology}}

An alternative way to define the topology (but not the distance) on
the space of information structures would be through the convergence
of values. Say that a sequence of information structures $u_{n}$
converges to $u$ pointwise if for all payoff structures $g\in\CG$,
$\lim_{n\rightarrow\infty}\val(u_{n},g)=\val(u,g)$. Note that pointwise
convergence is also well-defined on $\Pi$.

The topology of pointwise convergence is the weakest topology that
makes the value mappings continuous. This property is not as easy
to interpret and it does not seem to be as useful as the value-based
metric induced topology. The next result provides its characterization. 
\begin{thm}
\label{thm5} Sequence $u_{n}$ of information structures converges
pointwise to $u$ if and only if $d\left(u_{n},u\right)\rightarrow0$.
In other words, the topology of pointwise convergence and the value-based
topology coincide on $\CU^{*}$ and they are homeomorphic to the weak
topology on $\Pi_{0}$. \\
 The topology of weak convergence on $\Pi$ coincides with the topology
of pointwise convergence. 
\end{thm}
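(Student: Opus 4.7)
The plan is to prove the second statement first, by a standard compactness-plus-separation argument on $\Pi$, and then to derive the first statement from it together with Theorem \ref{Thm: weak topology}.

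Three facts recalled in Section \ref{subsec:Universal-space-of} form the backbone of the argument: (i) $\Pi$ is compact metric for the weak topology; (ii) for every $g\in\CG$, the map $P\mapsto\val(P,g)$ is continuous on $\Pi$ for the weak topology (Proposition III.4.2 in \citet{mertens_sorin_zamir_2015}); and (iii) value functions separate $\Pi$, in the sense that $\val(P,\cdot)=\val(Q,\cdot)$ on $\CG$ implies $P=Q$ (Lemma 41 in \citet{gossner_value_2001}).

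For the second statement, weak convergence $P_{n}\to P$ in $\Pi$ implies pointwise convergence by (ii). Conversely, assume $\val(P_{n},g)\to\val(P,g)$ for every $g\in\CG$. By (i), every subsequence of $(P_{n})$ admits a further subsequence $P_{n_{k}}$ converging weakly to some $Q\in\Pi$. By (ii), $\val(Q,g)=\lim_{k}\val(P_{n_{k}},g)=\val(P,g)$ for every $g$, and (iii) forces $Q=P$. Since every subsequence has a further subsequence converging weakly to $P$, the whole sequence converges weakly to $P$.

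For the first statement, the trivial inequality $|\val(u_{n},g)-\val(u,g)|\le d(u_{n},u)$ shows that value-based convergence in $\CU^{*}$ implies pointwise convergence. Conversely, pointwise convergence of $u_{n}\to u$ in $\CU^{*}$ amounts, via $u\mapsto\tilde u$ and the identity $\val(u,g)=\val(\tilde u,g)$, to pointwise convergence of $\tilde u_{n}\to\tilde u$ in $\Pi_{0}\subseteq\Pi$. The second statement (applied on $\Pi$) turns this into weak convergence $\tilde u_{n}\to\tilde u$; since $\tilde u\in\Pi_{0}$, Theorem \ref{Thm: weak topology} then yields $d(u_{n},u)\to 0$.

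The only non-formal ingredient is the separation property (iii), which we do not reprove. The main subtlety is to invoke Theorem \ref{Thm: weak topology} only when the limit is countable: this is exactly the situation of the first statement, and it is also why an analogous value-based/weak equivalence cannot hold globally on $\Pi$, as Theorem \ref{thm3} demonstrates by exhibiting a weakly convergent sequence in $\Pi_{0}$ whose limit in $\Pi\setminus\Pi_{0}$ is not reached in value-based distance.
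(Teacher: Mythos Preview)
Your proof is correct and follows essentially the same route as the paper: continuity of the value functions gives one direction, and compactness of $\Pi$ together with the separation property (Lemma~41 of \citet{gossner_value_2001}) gives the other, with Theorem~\ref{Thm: weak topology} then bridging pointwise/weak convergence to value-based convergence on $\CU^{*}$. Your presentation is in fact more explicit than the paper's terse argument; the only minor slip is that continuity of $P\mapsto\val(P,g)$ is Proposition~III.4.3 (not III.4.2) in \citet{mertens_sorin_zamir_2015}.
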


\begin{proof}
The Theorem follows from the observations made in Section \ref{subsec:Universal-space-of}.
On one hand, lemma 41 of \citet{gossner_value_2001} implies that
the weak topology on $\Pi$ is coarser than the topology of weak convergence.
On the other hand, because the value is continuous (Lemma 2 in \citet{mertens_repeated_1986}
or Proposition III.4.3. in \citet{mertens_sorin_zamir_2015}), the
two topologies are equivalent. 
\end{proof}
Theorem \ref{thm5} suggests a possible alternative construction of
the set $\Pi$ of consistent probability over the universal belief
space. The alternative construction is simply based on the values
of finite zero-sum Bayesian games.

\subsection{Strategic topologies}

Following \citet{dekel_topologies_2006}, the literature studies the
robustness of the behavior with respect to beliefs in a systematic
way. A typical approach is to define the $\varepsilon$-ICR as the
outcome of the iterated elimination procedure, where, at each stage
of the elimination, players actions must be $\varepsilon$-best responses
to the conjectures that allow for arbitrary correlations with the
state of the world and that assign probability 1 to actions that survived
the previous stage. This solution concept depends only on the hierarchy
of beliefs. One defines two topologies (see \citet{chen_uniform_2016}): 
\begin{itemize}
\item sequence of types (i.e., hierarchies of beliefs) $t_{n}$ converges
to $t$ in the \emph{weak strategic topology} if and only if \uwave{for
each game }%
\mbox{%
$G$%
}, the set of actions that are 0-ICR at type $t$ in game $G$ is equal
to the set of actions such that \uwave{for each }%
\mbox{%
$\varepsilon>0$%
}\uwave{, there exists }%
\mbox{%
$N$%
} such that for each $n\geq N$, the action is $\varepsilon-ICR$ at
type $t_{n}$ in game $G$, and 
\item sequence of types (i.e., hierarchies of beliefs) $t_{n}$ converges
to $t$ in the \emph{uniform strategic topology} if and only if \uwave{for
each }%
\mbox{%
$\varepsilon>0$%
}\uwave{, there exists }%
\mbox{%
$N$%
}such that \uwave{for each game }%
\mbox{%
$G$%
} (with payoffs bounded in norm by 1), the set of actions that are
0-ICR at type $t$ in game $G$ is equal to the set of actions that
are $\varepsilon-ICR$ at type $t_{n}$ for $n\geq N$. 
\end{itemize}
The two topologies differ by the order of quantifiers; the uniform
topology corresponds more to the value-based distance topology and
the weak topology corresponds to the topology introduced in Section
\ref{subsec:Weak-zero-sum-topology}. Both topologies are metric.
\citet{chen_uniform_2016} provide a full characterization of the
two topologies directly in terms of hierarchies of beliefs. In particular,
it is well-known that the two topologies are strictly finer than the
product topology on the universal type space.

It is not entirely obvious how to compare the value-based and the
strategic topologies as, due to the difference between ex ante and
interim approaches, their domains are different.

One way to do is to equip $\Theta^{1}$ with the strategic or uniform
strategic topology, and to consider the induced weak topology (i.e.
of convergence in distribution) on $\Pi\subset\Delta(\Theta^{1})$.

One can easily see that the two topologies are strictly finer than
the weak topology on $\Pi_{0}$ (this is demonstrated by, for instance
the Rubinstein's email game - see Example \ref{ex:Rubinstein}). Given
Theorem \ref{Thm: weak topology}, the two strategic topologies are
strictly finer than the value-based topology on $\Pi_{0}$. We do
not know whether the two topologies remain finer when the value-based
topology is extended to $\Pi$.

\section{Conclusion}

In this paper, we have introduced and analyzed the value-based distance
on the space of information structures. The main advantage of the
definition is that it has a simple and useful interpretation as the
tight upper bound on the loss or gain from moving between two information
structures. This allows us to directly apply it to numerous questions
about the value of information, the relation between the games and
single-agent problems, comparison of information structures, etc.
Additionally, we show that the distance contains an interesting topological
information. On one hand, the topology induced on the countable information
structures is equivalent to the topology of weak convergence of consistent
probabilities over coherent hierarchies of beliefs. On the other hand,
the set of countable information structures is not totally bounded
for the value-based distance, which solves negatively the last open
question raised in \citet{mertens_repeated_1986}, with deep implications
for stochastic games.

By restricting our attention to zero-sum games, we were able to re-examine
the relevance of many phenomena observed and discussed in the strategic
discontinuities literature. On one hand, the distinction between the
approximate knowledge and the approximate common knowledge is not
important in situations of conflict. On the other hand, the higher
order beliefs matter on some, potentially uncountably large structures.
More generally, we believe that the discussion of the strategic phenomena
on particular classes of games can be fruitful line of future research.
It is not the case that each problem must involve coordination games.
Interesting classes of games to study could be common interest games,
potential games, etc. \footnote{As an example of work in this direction, \citet{yamashita2018} studies an order on hierarchies and types induced by payoffs in supermodular games.}

\appendix

\section{Proofs of Section \ref{sec:Computing}}

\subsection{Proof of Theorem \ref{thm1}}

The proof of Theorem 1 relies on two main aspects: the two interpretations
of a garbling (deterioration of signals, and strategy), and the minmax
theorem.

\emph{Part 1.} We start with general considerations. Recall that any
$g\in\CG(L)$ is (identified with) a map from $K\times\N\times\N\rightarrow[-1,1]$
such that $g(k,i,j)=-1$ if $i\geq L$ and $g(k,i,j)=1$ if $i<L$
and $j\geq L$. For $u\in\CU$ and $g\in\CG(L)$, we denote by $\gamma_{u,g}(q_{1},q_{2})$
the payoff of player 1 in the zero-sum game $\Gamma(u,g)$ when player
1 plays $q_{1}\in\CQ$ and player 2 plays $q_{2}\in\CQ$. Extending
as usual $g$ to mixed actions, we have: $\gamma_{u,g}(q_{1},q_{2})=\sum_{k,c,d}u(k,c,d)g(k,q_{1}(c),q_{2}(d))$.
For $u\in\CU$ and $g\in\CG$, the scalar product $\langle g,u\rangle=\sum_{k,c,d}g(k,c,d)u(k,c,d)$
is well defined, and corresponds to the expectation of $g$ with respect
to $u$, and to the payoff $\gamma_{u,g}(Id,Id)$, where $Id\in\CQ$
is the strategy which plays with probability one the signal received.
The map $g\in\CG(L)$ has been extended to $K\times\N\times\N$ in
such a way that 
\[
\forall q_{1}\in\CQ(L),\;\min_{q_{2}\in\CQ(L)}\gamma_{u,g}(q_{1},q_{2})=\min_{q_{2}\in\CQ}\gamma_{u,g}(q_{1},q_{2}),
\]
and a similar result for player $1$. Therefore, we have for all $g\in\CG(L)$
\begin{align*}
\val(u,g) & =\max_{\sigma_{1}\in\CQ(L)}\min_{\sigma_{2}\in\CQ(L)}\gamma_{u,g}(\sigma_{1},\sigma_{2})=\min_{\sigma_{2}\in\CQ(L)}\max_{\sigma_{1}\in\CQ(L)}\gamma_{u,g}(\sigma_{1},\sigma_{2})\\
 & =\max_{\sigma_{1}\in\CQ}\min_{\sigma_{2}\in\CQ}\gamma_{u,g}(\sigma_{1},\sigma_{2})=\min_{\sigma_{2}\in\CQ}\max_{\sigma_{1}\in\CQ}\gamma_{u,g}(\sigma_{1},\sigma_{2})
\end{align*}

Let us now compute the payoff $\gamma_{u,g}(q_{1},q_{2})$, for any
$q_{1},q_{2}$: 
\begin{eqnarray*}
\gamma_{u,g}(q_{1},q_{2}) & = & \sum_{k,c,d}u(k,c,d)g(k,q_{1}(c),q_{2}(d))\\
 & = & \sum_{k,c,d}u(k,c,d)\sum_{c',d')}q_{1}(c)(c')q_{2}(d)(d')g(k,c',d')\\
 & = & \sum_{k,c',d'}g(k,c',d')\sum_{c,d}u(k,c,d)q_{1}(c)(c')q_{2}(d)(d')\\
 & = & \sum_{k,c',d'}g(k,c',d')\left(\left(q_{1}.u.q_{2}\right)(k,c',d')\right)\\
 & = & \langle g,q_{1}.u.q_{2}\rangle.
\end{eqnarray*}
Consequently, $\val(u,g)=\max_{q_{1}\in\CQ}\min_{q_{2}\in\CQ}\langle g,q_{1}.u.q_{2}\rangle=\min_{q_{2}\in\CQ}\max_{q_{1}\in\CQ}\langle g,q_{1}.u.q_{2}\rangle$.
Since both players can play the $Id$ strategy in $\Gamma({u,g})$,
we obtain for all $u\in\CU$ and $g\in\CG(L)$ : 
\[
\inf_{q_{2}\in\CQ}\langle g,u.q_{2}\rangle\leq\val(u,g)\leq\sup_{q_{1}\in\CQ}\langle g,q_{1}.u\rangle.
\]
Notice also that for all $u$, $v$ in $\CU$, $\|u-v\|=\sup_{g\in\CG}\langle g,u-v\rangle$.

\noindent \emph{Part 2.} We now prove Theorem \ref{thm1}. Take $g\in\CG$
and $q_{1},q_{2}\in\CQ$. Then, $\val(v.q_{2},g)\geq\val(v,g)\text{ and }\val(u,g)\geq\val(q_{1}.u,g)$,
which implies 
\[
\val(v,g)-\val(u,g)\leq\val(v.q_{2},g)-\val(q_{1}.u,g)\leq\|q_{1}.u-v.q_{2}\|.
\]
Because $g$ and $q_{1},q_{2}$ are arbitrary, we obtain 
\[
\sup_{g\in\CG}\left(\val(v,g)-\val(u,g)\right)\leq\inf_{q_{1}\in\CQ,q_{2}\in\CQ}\|q_{1}.u-v.q_{2}\|.
\]
Thus, it remains to prove that 
\begin{equation}
\inf_{q_{1}\in\CQ,q_{2}\in\CQ}\|q_{1}.u-v.q_{2}\|\leq\sup_{g\in\CG}\left(\val(v,g)-\val(u,g)\right)\label{eq1}
\end{equation}
and that the minimum is achieved in the left-hand side. We have 
\[
\inf_{q_{1}\in\CQ,q_{2}\in\CQ}\|q_{1}.u-v.q_{2}\|=\inf_{q_{1}\in\CQ,q_{2}\in\CQ}\sup_{g\in\CG}\langle g,v.q_{2}-q_{1}.u\rangle.
\]
The sets $\CQ$ are convex and compact in the product topology, the
set $\CG$ is a convex subset of some linear vector space, the map
\[
(g,(q_{1},q_{2}))\rightarrow\langle g,v.q_{2}-q_{1}.u\rangle,
\]
is bilinear and continuous with respect to $(q_{1},q_{2})\in\CQ^{2}$
for any fixed $g\in\CG$, and by Sion's theorem (see e.g. Sorin \citet{sorin2002first}
Proposition A.8): 
\[
\min_{q_{1}\in\CQ,q_{2}\in\CQ}\sup_{g\in\CG}\langle g,v.q_{2}-q_{1}.u\rangle=\sup_{g\in\CG}\min_{q_{1}\in\CQ,q_{2}\in\CQ}\langle g,v.q_{2}-q_{1}.u\rangle.
\]
Inequality (\ref{eq1}) now follows from : 
\begin{eqnarray*}
\sup_{g\in\CG}\inf_{q_{1}\in\CQ,q_{2}\in\CQ}\langle g,v.q_{2}-q_{1}.u\rangle & = & \sup_{g\in\CG}\left(\inf_{q_{2}\in\CQ}\langle g,v.q_{2}\rangle-\sup_{q_{1}\in\CQ}\langle g,q_{1}.u\rangle\right)\\
 & \leq & \sup_{g\in\CG}\left(\val(v,g)-\val(u,g)\right).
\end{eqnarray*}
Finally, if $u,v\in\CG(L)$ for some $L\in\N$, the argument of part
$2$ can be repeated by replacing $\CQ$ by $\CQ(L)$ and $\CG$ by
$\CG(L)$ since for any $q_{1},q_{2}\in\CQ(L)$, we have $\|q_{1}.u-v.q_{2}\|=\sup_{g\in\CG(L)}\langle g,q_{1}.u-v.q_{2}\rangle$.
Moreover, $\CG(L)$ is compact for the topology of pointwise convergence
on $K\times\{0,...,L-1\}^{2}$, and the map $(g,(q_{1},q_{2}))\rightarrow\langle g,v.q_{2}-q_{1}.u\rangle$
is continuous with respect to $g\in\CG(L)$ for any fixed $(q_{1},q_{2})\in\CQ(L)$,
implying that 
\begin{equation}
\max_{g\in\CG(L)}\left(\val(v,g)-\val(u,g)\right)=\min_{q_{1}\in\CQ(L),q_{2}\in\CQ(L)}\|q_{1}.u-v.q_{2}\|.\label{eq:proofthm1_1}
\end{equation}
Clearly, 
\begin{align*}
\sup_{g\in\CG(L)}\left(\val(v,g)-\val(u,g)\right) & \leq\sup_{g\in\CG}\left(\val(v,g)-\val(u,g)\right),\text{ and}\\
\inf_{q_{1}\in\CQ,q_{2}\in\CQ}\|q_{1}.u-v.q_{2}\| & \leq\inf_{q_{1}\in\CQ(L),q_{2}\in\CQ(L)}\|q_{1}.u-v.q_{2}\|.
\end{align*}
and using \eqref{eq:d characterization} and \eqref{eq:proofthm1_1},
these inequalities are equalities, which concludes the proof.

\subsection{Proof of Proposition \ref{prop: SA=00003D00003DZS}}

Let us start with general properties of $d_{1}$. Let us define the
set of single agent information structures as $\CU_{1}=\Delta(K\times\N)$
using the same convention that countable sets are identified with
subsets of $\N$. Note that given $u\in\Delta(K\times C\times D)$,
$\marg_{K\times C}u\in\CU_{1}$. Let 
\[
\CG'_{1}=\{g':K\times\N\to[-1,1],\;\exists L\;s.t.\forall i\geq L,\;g'(k,i)=-1\}
\]
be the set of single-agent decision problems, and define for $u',v'\in\CU_{1}$,
$d'_{1}(u',v')=\sup_{g'\in\CG'_{1}}|\val(v',g')-\val(u',g')|$. It
is easily seen that for any $u,v\in\Delta(K\times C\times D)$, 
\begin{equation}
d_{1}(u,v)=d'_{1}(u',v')=\max\{\min_{q\in\CQ}\|u'-q.v'\|,\min_{q\in\CQ}\|q.u'-v'\|\}\label{eq:d_1_marg}
\end{equation}
where $u'=\marg_{K\times C}u$, $v'=\marg_{K\times C}v$, $q.u'(k,c)=\sum_{s\in C}u'(k,s)q(s)(c)$
and where the last equality can be obtained by mimicking (and simplifying)
the arguments of the proof of Theorem \ref{thm1}.

We now prove Theorem \ref{thm1}. Using the assumptions, we have $u(k)=v(k)$,
$u\left(c,d|k\right)=u\left(c|k\right)u\left(d|k\right)$ and $v\left(c^{\prime},d|k\right)=v\left(d|k\right)v\left(c^{\prime}|k\right)=u(d|k)v\left(c^{\prime}|k\right)$.
For any pair of garblings $q_{1},q_{2}$ 
\begin{align*}
\left\Vert u.q_{2}-q_{1}.v\right\Vert = & \sum_{k,c,d}\left|\sum_{\beta}u\left(k,c,\beta\right)q_{2}\left(d|\beta\right)-\sum_{\alpha}v\left(k,\alpha,d\right)q_{1}\left(c|\alpha\right)\right|\\
= & \sum_{k,c}u\left(k\right)\sum_{d}\left|u\left(c|k\right)\sum_{\beta}u\left(\beta|k\right)q_{2}\left(d|\beta\right)-\left(\sum_{\alpha}v\left(\alpha|k\right)q_{1}\left(c|\alpha\right)\right)u\left(d|k\right)\right|\\
= & \sum_{k,c}u\left(k\right)\sum_{d}\left|u\left(d|k\right)\Gamma\left(k,c\right)+\Delta\left(k,d\right)u\left(c|k\right)\right|,
\end{align*}
where 
\begin{align*}
\Delta\left(k,d\right) & =u\left(d|k\right)-\sum_{\beta}u\left(\beta|k\right)q_{2}\left(d|\beta\right),\\
\Gamma\left(k,c\right) & =\sum_{\alpha}v\left(\alpha|k\right)q_{1}\left(c|\alpha\right)-u\left(c|k\right).
\end{align*}
Because $\left|x+y\right|\geq|x|+\text{sgn}(x)y$ for each $x,y\in\R$,
we have 
\begin{align*}
 & \sum_{d}\left|u\left(d|k\right)\Gamma\left(k,c\right)+\Delta\left(k,d\right)u\left(c|k\right)\right|\\
\geq & \sum_{d}u\left(d|k\right)\left|\Gamma\left(k,c\right)\right|+\text{sgn}\left(\Gamma\left(k,c\right)\right)u\left(c|k\right)\sum_{d}\Delta\left(k,d\right)\\
= & \sum_{d}u\left(d|k\right)\left|\Gamma\left(k,c\right)\right|.
\end{align*}
where the last equality comes from the fact that $\sum_{d}\Delta\left(k,d\right)=0$.
Thus, we obtain 
\begin{align*}
\left\Vert u.q_{2}-q_{1}.v\right\Vert  & \geq\sum_{k,c,d}u\left(k\right)\left|u\left(d|k\right)\Gamma\left(k,c\right)\right|\\
 & =\sum_{k,c,d}u\left(k\right)\left|u\left(d|k\right)u\left(c|k\right)-\sum_{\alpha}u\left(d|k\right)v\left(\alpha|k\right)q_{1}\left(c|\alpha\right)\right|\\
 & =\left\Vert u-q_{1}.v\right\Vert .
\end{align*}

We deduce that 
\begin{align*}
\min_{q_{1},q_{2}}\left\Vert u.q_{2}-q_{1}.v\right\Vert =\min_{q_{1}}\left\Vert u-q_{1}.v\right\Vert .
\end{align*}
Inverting the roles of the players, we also have 
\begin{align*}
\min_{q_{1},q_{2}}\left\Vert v.q_{2}-q_{1}.y\right\Vert =\min_{q_{1}}\left\Vert v-q_{1}.u\right\Vert .
\end{align*}
We conclude that 
\begin{align*}
d(u,v) & =\max\{\min_{q_{1},q_{2}}\left\Vert u.q_{2}-q_{1}.v\right\Vert ;\min_{q_{1},q_{2}}\left\Vert v.q_{2}-q_{1}.y\right\Vert \}\\
 & =\max\{\min_{q_{1}}\left\Vert u-q_{1}.v\right\Vert ;\min_{q_{1}}\left\Vert v-q_{1}.u\right\Vert \}\\
 & =d_{1}(u,v),
\end{align*}
where the last equality follows from \eqref{eq:d_1_marg} together
with the fact that $\marg_{K\times D}u=\marg_{K\times D}v$.

\subsection{Proof of Corollary \ref{cor: d charact comparison}}

By Theorem \ref{thm1}, 
\[
\sup_{g\in\CG}\left(\val(v,g)-\val(u,g)\right)=\min_{q_{1}\in\CQ,q_{2}\in\CQ}\|q_{1}.u-v.q_{2}\|.
\]
Since $q_{1}.u\preceq u$ and $v.q_{2}\succeq v$, we have $\sup_{g\in\CG}\left(\val(v,g)-\val(u,g)\right)\geq\inf_{u'\preceq u,v'\succeq v}\|u'-v'\|$.
Consider now $u'\preceq u$ and $v'\succeq v$. We have for all $g\in\CG$:
\[
\|u'-v'\|\geq\val(v',g)-\val(u',g)\geq\val(v,g)-\val(u,g),
\]
so $\sup_{g\in\CG}\left(\val(v,g)-\val(u,g)\right)\leq\inf_{u'\preceq u,v'\succeq v}\|u'-v'\|$.
Hence 
\[
\sup_{g\in\CG}\left(\val(v,g)-\val(u,g)\right)=\inf_{u'\preceq u,v'\succeq v}\|u'-v'\|
\]

\subsection{Proof of Proposition \ref{prop:diameter}}

We prove the lower bound of (\ref{eq:diameter bounds}). Let $g\left(k\right)=\1_{p_{k}>q_{k}}-\1_{p_{k}\leq q_{k}}$.
Then, 
\[
d\left(u,v\right)\geq\val\left(u,g\right)-\val\left(v,g\right)=\sum_{k\in K}\left(p_{k}-q_{k}\right)g\left(k\right)=\sum_{k\in K}\left|p_{k}-q_{k}\right|.
\]

Let us prove the upper bound of (\ref{eq:diameter bounds}). Define
$\bar{u}$ and $\underline{v}$ in $\Delta(K\times K_{C}\times K_{D})$
with $K=K_{C}=K_{D}$ such that $\bar{u}(k,c,d)=p_{k}\1_{c=k}\1_{d=k_{0}}$
for some fixed $k_{0}\in K$ (complete information for player 1, trivial
information for player 2, and the same prior about $k$ as $u$) and
$\underline{v}(k,c,d)=q_{k}\1_{c=k_{0}}\1_{d=k}$ for all $(k,c,d)$
(trivial information for player 1, complete information for player
2, and the same beliefs about $k$ as $v$). Since the value of a
zero-sum game is weakly increasing with player 1's information and
weakly decreasing with player 2's information, we have $u\preceq\bar{u}$
and $\underline{v}\preceq v$. Therefore, 
\[
\sup_{g\in\CG}(\val(u,g)-\val(v,g))\leq\sup_{g\in\CG}(\val(\bar{u},g)-\val(\underline{v},g))=\min_{q_{1}\in\CQ,q_{2}\in\CQ}\|\bar{u}.q_{2}-q_{1}.\underline{v}\|,
\]
where, according to Theorem \ref{thm1}, the minimum in the last expression
is attained for garblings with values in $\Delta K$. Since player
2 has a unique signal in $\bar{u}$, only $q_{2}(.|k_{0})\in\Delta K$
matters. We denote it by $q'=q_{2}\left(.|k_{0}\right)$. Similarly,
we define $p^{\prime}=q_{1}(.|k_{0})\in\Delta(K)$. Then,

\begin{align*}
\|\bar{u}.q_{2}-q_{1}.\underline{v}\| & =\sum_{(k,c,d)\in K^{3}}|p_{k}\1_{c=k}q{}_{d}^{\prime}-q_{k}\1_{d=k}p{}_{c}^{\prime}|\\
 & =\sum_{k\in K}|p_{k}q{}_{k}^{\prime}-q_{k}p{}_{k}^{\prime}|+p_{k}(1-q{}_{k}^{\prime})+q_{k}(1-p{}_{k}^{\prime})\\
 & =2+\sum_{k\in K}|p_{k}q_{k}^{\prime}-q_{k}p_{k}^{\prime}|-p_{k}q_{k}^{\prime}-q_{k}p_{k}^{\prime}\\
 & =2\left(1-\sum_{k\in K}\min\left(p_{k}q_{k}^{\prime},q_{k}p_{k}^{\prime}\right)\right).
\end{align*}
A similar inequality holds by inverting the roles of $u$ and $v$,
and the upper bound follows from the fact that one can choose arbitrary
$p^{\prime},q^{\prime}$.

If $p=q$, then 
\[
\sum_{k\in K}\min\left(p_{k}q_{k}^{\prime},q_{k}p_{k}^{\prime}\right)=\sum_{k\in K}p_{k}\min\left(q_{k}^{\prime},p_{k}^{\prime}\right)\leq\sum_{k\in K}p_{k}p_{k}^{\prime}\leq\max_{k\in K}p_{k},
\]
where the latter is attained by $p_{k}^{\prime}=q_{k}^{\prime}=\1_{\{k=k^{*}\}}$
for some $k^{*}\in K$ such that $p_{k^{*}}=\max_{k\in k}p_{k}$.

\subsection{Proof of Proposition \ref{prop: optimal strategies}}

We prove the first claim. Consider any strategy $\tau$ of player
2 in $\CQ$. Notice that $(\sigma.q_{1}).u=\sigma.(q_{1}.u)$ and
$(v.q_{2}).\tau=v.(\tau.q_{2}).$ Using the notation of the proof
of theorem \ref{thm1}, we have 
\begin{eqnarray*}
\gamma_{u,g}(\sigma.q_{1},\tau) & = & \langle g,(\sigma.q_{1}).u.\tau\rangle\\
 & = & \langle g,\sigma.(q_{1}.u).\tau\rangle\\
 & \geq & \langle g,\sigma.(v.q_{2}).\tau\rangle-\|q_{1}.u-v.q_{2}\|\\
 & \geq & \langle g,\sigma.v.(\tau.q_{2})\rangle-d(u,v)\\
\  & \geq & \val(v,g)-d(u,v)\\
 & \geq & \val(u,g)-2d(u,v),
\end{eqnarray*}
so $\sigma.q_{1}$ is $2d(u,v)$-optimal in $g$ on $u$.

\section{Proofs of Section \ref{sec:Value-of-additional}}

\subsection{Proof of Proposition \ref{prop. Info substitutes}}

Because $u$ has more information than $v$, 
\begin{align*}
d\left(u,v\right) & =\min_{q_{2}\in\CQ}\min_{q_{1}\in\CQ}\left\Vert u.q_{2}-q_{1}.v\right\Vert \leq\min_{q_{2}\in\CQ}\min_{q_{1}:C\rightarrow\Delta\left(C\times C_{1}\times C_{2}\right)}\left\Vert u.q_{2}-q_{1}.v\right\Vert ,
\end{align*}
where in the second inequality we use a restricted set of player 1's
garblings that do not depend on $c_{1}$. Further, for any such $q_{1}$
and an arbitrary garbling $q_{2}$, we have 
\begin{align*}
\left\Vert u.q_{2}-q_{1}.v\right\Vert  & =\sum_{k,c,c_{1},c_{2},d}\left|\sum_{\beta}u\left(k,c,c_{1},c_{2},\beta\right)q_{2}\left(d|\beta\right)-\sum_{\alpha}u\left(k,c,c_{1},d\right)q_{1}\left(c,c_{2}|\alpha\right)\right|\\
 & =\sum_{k,c,c_{1},c_{2},d}u\left(k,c_{1}\right)\left|\sum_{\beta}u\left(c,c_{2},\beta|k,c_{1}\right)q_{2}\left(d|\beta\right)-\sum_{\alpha}u\left(c,d|k,c_{1}\right)q_{1}\left(c,c_{2}|\alpha\right)\right|.
\end{align*}
Because of the conditional independence assumption, the above is equal
to 
\begin{align*}
= & \sum_{k,c,c_{2},d}\left(\sum_{c_{1}}u\left(k,c_{1}\right)\right)\left|\sum_{\beta}u\left(c,c_{2},\beta|k\right)q_{2}\left(d|\beta\right)-\sum_{\alpha}u\left(c,d|k\right)q_{1}\left(c,c_{2}|\alpha\right)\right|\\
= & \sum_{k,c,c_{2},d}\left|\sum_{\beta}u\left(k,c,c_{2},\beta\right)q_{2}\left(d|\beta\right)-\sum_{\alpha}u\left(k,c,d\right)q_{1}\left(c,c_{2}|\alpha\right)\right|\\
= & \left\Vert u^{\prime}.q_{2}-q_{1}.v{}^{\prime}\right\Vert .
\end{align*}
Hence, 
\[
d\left(u,v\right)\leq\min_{q_{2}}\min_{q_{1}:C\rightarrow\Delta\left(C\times C_{1}\times C_{2}\right)}\left\Vert u^{\prime}.q_{2}-q_{1}.v{}^{\prime}\right\Vert =d\left(u^{\prime},v{}^{\prime}\right).
\]

\subsection{Proof of Proposition \ref{prop. Info complements}}

We have 
\begin{align*}
d\left(u^{\prime},v{}^{\prime}\right)=d_{1}\left(u^{\prime},v{}^{\prime}\right)=d_{1}\left(u,v\right)\leq d\left(u,v\right).
\end{align*}
The first equality comes from Proposition \ref{prop: Games vs Single agent},
and the second from the fact that $u$ and $u'$ (resp. $v$ and $v'$)
induce the same distribution on player $1$ first order beliefs, and
the inequality from the definition of the two distances.

\subsection{Proof of Proposition \ref{prop: joint info}}

We have the following Lemma: 
\begin{lem}
\label{lem: cond independence}Suppose that $c_{1}$ is $\varepsilon$-conditionally
independent from $\left(k,d\right)$ given $c$. Then, 
\[
\sup_{g\in\CG}\val\left(u,g\right)-\val\left(v,g\right)\leq\varepsilon.
\]
\end{lem}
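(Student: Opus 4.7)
The plan is to use the characterization from Theorem \ref{thm1}, applied with the roles of $u$ and $v$ swapped, to rewrite the target as a minimum-distance problem over garblings:
\[
\sup_{g \in \CG} \bigl(\val(u,g) - \val(v,g)\bigr) = \min_{q_1, q_2 \in \CQ} \|q_1.v - u.q_2\|,
\]
and then exhibit a specific pair $(q_1, q_2)$ whose associated $L^1$ distance is exactly the $\varepsilon$-conditional independence defect in the hypothesis. This reduces the whole lemma to a single explicit calculation.

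First I would take $q_2$ to be the deterministic garbling of player 2's signal that maps $(d, d_1)$ to $d$, so that $u.q_2$ is (identified with) the marginal $\marg_{K \times (C \times C_1) \times D}\, u$. Informally this ``forgets'' the extra piece $d_1$ while keeping the richer signal of player 1. Then I would take $q_1 : C \to \Delta(C \times C_1)$ to be the garbling that, on input $c$, outputs the pair $(c, c_1)$ with $c_1$ drawn from the conditional law $u(\cdot\, |\, c)$. Since $v = \marg_{K\times C \times D}\, u$, this yields
\[
q_1.v(k,(c,c_1),d) = v(k,c,d)\, u(c_1 \mid c) = u(k,c,d)\, u(c_1 \mid c).
\]

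Comparing with $u.q_2(k,(c,c_1),d) = u(k,c,c_1,d)$ and factoring out $u(c)$, the total variation distance becomes
\[
\|q_1.v - u.q_2\| = \sum_{c} u(c) \sum_{k, c_1, d} \bigl| u(k,d \mid c)\, u(c_1 \mid c) - u(k, c_1, d \mid c) \bigr|,
\]
which is precisely the quantity appearing in the definition of $\varepsilon$-conditional independence of $c_1$ from $(k,d)$ given $c$; by hypothesis this is at most $\varepsilon$. Combined with the characterization above, this gives $\sup_g (\val(u,g) - \val(v,g)) \leq \varepsilon$.

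There is no real obstacle: the only idea is choosing $q_2$ to discard the hurtful extra signal $d_1$ on player 2's side and $q_1$ to emulate the distribution of $c_1$ on player 1's side, after which the conditional-independence hypothesis is bespoke for controlling the resulting discrepancy. The mildly subtle point worth flagging in the write-up is the direction of the characterization (it is $\sup(\val(u)-\val(v))$ that equals $\min\|q_1.v - u.q_2\|$, not the other way around), since switching sides would swap the roles of the two garblings and the argument would no longer line up with the hypothesis.
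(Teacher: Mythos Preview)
Your proof is correct and follows essentially the same approach as the paper: you invoke Theorem~\ref{thm1} with the roles of $u$ and $v$ swapped, choose the deterministic garbling $q_2$ that forgets $d_1$ and the garbling $q_1$ that appends $c_1$ drawn from $u(\cdot\mid c)$, and then identify the resulting total-variation distance with the $\varepsilon$-conditional-independence defect. The only cosmetic difference is that the paper writes $\|u.q_2 - q_1.v\|$ rather than $\|q_1.v - u.q_2\|$, which is of course the same quantity.
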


\begin{proof}
Let $q_{2}:D\times D_{1}\rightarrow D$ be defined as $q_{2}\left(d,d_{1}\right)(d')=\1_{d'=d}$.
Let $q_{1}:C\rightarrow C\times C_{1}$ be defined as $q_{1}\left(c,c_{1}|c\right)=u\left(c_{1}|c\right)$.
Then, 
\begin{align*}
\left\Vert u.q_{2}-q_{1}.v\right\Vert  & =\sum_{k,c,c_{1},d}\left|u\left(k,c,c_{1},d\right)-u\left(k,c,d\right)u\left(c_{1}|c\right)\right|\\
 & =\sum_{c}u\left(c\right)\sum_{k,c_{1},d}\left|u\left(k,c_{1},d|c\right)-u\left(k,d|c\right)u\left(c_{1}|c\right)\right|\leq\varepsilon.
\end{align*}
The claim follows from Theorem \ref{thm1}. 
\end{proof}
An analogous argument shows that, if $d_{1}$ is $\varepsilon$-conditionally
independent from $\left(k,c\right)$ given $d$, then 
\[
\sup_{g\in\CG}\val\left(v,g\right)-\val\left(u,g\right)\leq\varepsilon.
\]

\section{Proofs of Section \ref{sec:Topology}}

\subsection{Theorem \ref{Thm: weak topology}: the weak topology is contained
in the value-based topology}

Because the weak topology is metrisable, it is enough to show that
if $u_{n}\in\Delta\left(K\times C_{n}\times D_{n}\right)$ and $u\in\Delta\left(K\times C\times D\right)$
are information structures such that $d\left(u_{n},u\right)\rightarrow0$,
then $\tilde{u}_{n}$ converges to $\tilde{u}$ in the weak topology.
Let $\tilde{u}^{*}$ be an accumulation point of sequence $\tilde{u}_{n}$.
Such a point exists because $\Pi$ is compact.

Suppose that $\tilde{u}^{*}\neq\tilde{u}$. Then, by lemma 41 in Gossner
Mertens \citet{gossner_value_2001}), there is a game $g$ such that
$\left|\val\left(\tilde{u},g\right)-\val\left(\tilde{u}^{*},g\right)\right|>0.$
For a fixed game, $\val:\Pi\rightarrow\left[-1,1\right]$ is continuous
in the weak topology (see Lemma 2 in \citet{mertens_repeated_1986}
or Proposition III.4.3. in \citet{mertens_sorin_zamir_2015}). It
follows that $\lim_{n}\left|\val\left(\tilde{u},g\right)-\val\left(\tilde{u}_{n},g\right)\right|=\left|\val\left(\tilde{u},g\right)-\val\left(\tilde{u}^{*},g\right)\right|>0$.
But this contradicts $d\left(u_{n},u\right)=d\left(\tilde{u}_{n},\tilde{u}\right)\rightarrow0$.

\subsection{Theorem \ref{Thm: weak topology}: the value-based topology is contained
in the weak topology}

Because the weak topology is metrisable, it is enough to show that
if $u_{n}\in\Delta\left(K\times C_{n}\times D_{n}\right)$ and $u\in\Delta\left(K\times C\times D\right)$
are information structures such that $\tilde{u}_{n}$ converges to
$\tilde{u}$ in the weak topology, then $d\left(u_{n},u\right)\rightarrow0$.
We will show that for each $\varepsilon>0$, 
\begin{equation}
\limsup_{n\rightarrow\infty}\sup_{g\in\CG}\left(\val\left(u_{n},g\right)-\val\left(u,g\right)\right)\leq0.\label{eq:topologies 1}
\end{equation}
Because we can switch the roles of players, this will suffice to establish
our claim.

\emph{Partitions of unity}. We can without loss of generality assume
that $u$ is non-redundant and all signals $c$ and $d$ have positive
probability. We can associate signals $c\in C\subseteq\N$ and $d\in D\subseteq\N$
with the corresponding hierarchies of beliefs in $\Theta_{1}$ and
$\Theta_{2}$. In other words, we identify $C\subseteq\Theta_{1}$
as the (countable) support of $\tilde{u}$ and $D\subseteq\Theta_{2}$
as the smallest countable set such that for each $c\in C$, $\phi_{1}\left(K\times D|c\right)=1$
(i.e., $D$ is the union of countable supports of all beliefs of hierarchies
in $C$). For each $c\in C$ and $d\in D$, we denote the corresponding
hierarchies under $u$ as $\tilde{c}$ and $\tilde{d}$. Also, let
$C^{m}=C\cap\left\{ 0,...,m\right\} $ and $D^{m}=D\cap\left\{ 0,...,m\right\} $.

Because $\Theta_{2}$ is Polish, for each $m\in\N$ and each $d\in D^{m}$,
we can find continuous functions $\kappa_{d}^{m}:\Theta_{2}\rightarrow\left[0,1\right]$
for $m\in\N,d\in\{0,...,m\}$ such that 
\begin{align*}
\kappa_{d}^{m}\left(\tilde{d}\right) & =1\text{ for each }d\in D^{m},\\
\kappa_{d}^{m} & \equiv0\text{ if }d\notin D,\text{ and }\\
\sum_{d=0}^{m}\kappa_{d}^{m}\left(\theta_{2}\right) & =1\text{ for each }\theta_{2}\in\Theta_{2}.
\end{align*}
In other words, for each $m$, $\left\{ \kappa_{d}^{m}\right\} _{0\leq d\leq m}$
is a continuous partition of unity on space $\Theta_{2}$ with the
property that for each $d\in D^{m}$, $\kappa_{d}^{m}$ peaks at hierarchy
$\tilde{d}$.

Notice that for each $c\in C$, each $d\in D^{p}$, we have 
\[
\E_{\phi_{1}(\tilde{c})}[\1_{\{k\}}(.)\kappa_{d}^{p}(.)]\geq u\left(k,d|c\right),
\]
and 
\[
\sum_{k\in K}\sum_{d=0}^{p}\left|\E_{\phi_{1}(\tilde{c})}[\1_{\{k\}}(.)\kappa_{d}^{p}(.)]-u\left(k,d|c\right)\right|=u(D\setminus D^{p}|c).
\]
Because all hierarchies $\tilde{c},c\in C$ are distinct, for each
$m$, there exists $p^{m}<\infty$ and $\varepsilon^{m}\in\left(0,\frac{1}{m}\right)$
such that for any $c,c'\in C^{m}$ such that $c\neq c{}^{\prime}$,
\[
\sum_{k\in K}\sum_{d=0}^{p^{m}}\left|\E_{\phi_{1}(\tilde{c})}[\1_{\{k\}}\kappa_{d}^{p^{m}}]-\E_{\phi_{1}(\tilde{c}^{\prime})}[\1_{\{k\}}\kappa_{d}^{p^{m}}]\right|\geq2\varepsilon^{m}.
\]
Let 
\[
h_{c}^{m}\left(\theta_{1}\right)=\sum_{k}\sum_{d=0}^{p^{m}}\left|\E_{\phi_{1}(\theta_{1})}[\1_{\{k\}}\kappa_{d}^{p^{m}}]-\E_{\phi_{1}(\tilde{c})}[\1_{\{k\}}\kappa_{d}^{p^{m}}]\right|.
\]
Then, $h_{c}^{m}$ is a continuous function such that $h_{c}^{m}\left(\tilde{c}\right)=0$,
and such that if $h_{c}^{m}\left(\theta_{1}\right)\leq\varepsilon^{m}$
for some $c\in C^{m}$, then $h_{c^{\prime}}^{m}\left(\theta_{1}\right)\geq\varepsilon^{m}$
for any $c'\in C^{m}$ such that $c^{\prime}\neq c$. Define continuous
functions for $0\leq c\leq m+1$, 
\begin{align*}
\kappa_{c}^{m}\left(\theta_{1}\right) & =\max\left(1-\frac{1}{\varepsilon^{m}}h_{c}^{m}\left(\theta_{1}\right),0\right)\text{ for }c\in C_{m},\\
\kappa_{c}^{m} & \equiv0\text{ if }c\notin C,\text{ and }\\
\kappa_{m+1}^{m}\left(\theta_{1}\right) & =1-\sum_{c=0}^{m}\kappa_{c}^{m}\left(\theta_{1}\right).
\end{align*}
Then, for each $m$, $\sum_{c=0}^{m+1}\kappa_{c}^{m}\equiv1$, and
$\kappa_{c}^{m}\left(\theta_{1}\right)\in\left[0,1\right]$ for each
$c=0,...,m+1$, which implies that $\left\{ \kappa_{c}^{m}\right\} _{0\leq c\leq m+1}$
is a continuous partition of unity on space $\Theta_{1}$ such that
for each $c\in C^{m}$, $\kappa_{c}^{m}(\tilde{c})=1$.

\emph{Conditional independence}. For each information structure $v\in\Delta\left(K\times C{}^{\prime}\times D{}^{\prime}\right)$,
define an information structure 
\[
K^{m}v\in\Delta\left(K\times C{}^{\prime}\times\left\{ 0,...,m+1\right\} \times D{}^{\prime}\times\left\{ 0,...,p^{m}\right\} \right)
\]
so that 
\[
K^{m}v\left(k,c^{\prime},\hat{c},d^{\prime},\hat{d}\right)=v\left(k,c^{\prime},d^{\prime}\right)\kappa_{\hat{c}}^{m}\left(\tilde{c}^{\prime}\right)\kappa_{\hat{d}}^{p^{m}}\left(\tilde{d}^{\prime}\right).
\]
Let $\delta^{m}v=2\varepsilon^{m}+K^{m}v\left(\hat{c}=m+1\right).$

We are going to show that, under $K^{m}v$, signal $c^{\prime}$ is
$\delta^{m}v$-conditionally independent from $\left(k,\hat{d}\right)$
given $\hat{c}$. Notice first that, if $K^{m}v\left(k,d^{\prime},\hat{d},c{}^{\prime},\hat{c}\right)>0$
for some $\hat{c}\in C^{m}$, then $h_{\hat{c}}^{m}\left(\tilde{c}^{\prime}\right)\leq\varepsilon^{m}.$
It follows that 
\begin{align*}
 & \sum_{k}\sum_{\hat{d}=0}^{p^{m}}\left|K^{m}v\left(k,\hat{d}|\hat{c},c{}^{\prime}\right)-\E_{\phi_{1}\left(\tilde{\hat{c}}\right)}[\1_{\{k\}}\kappa_{\hat{d}}^{p^{*}}]\right|\\
 & =\sum_{k}\sum_{\hat{d}=0}^{p^{m}}\left|K^{m}v\left(k,\hat{d}|c{}^{\prime}\right)-\E_{\phi_{1}\left(\tilde{\hat{c}}\right)}[\1_{\{k\}}[\kappa_{\hat{d}}^{p^{m}}]\right|\\
 & =\sum_{k}\sum_{\hat{d}=0}^{p^{m}}\left|\E_{\phi_{1}(\tilde{c}^{\prime})}[\1_{\{k\}}\kappa_{\hat{d}}^{p^{m}}]-\E_{\phi_{1}\left(\tilde{\hat{c}}\right)}[\1_{\{k\}}\kappa_{\hat{d}}^{p^{m}}]\right|=h_{\hat{c}}^{m}\left(\tilde{c}^{\prime}\right)\leq\varepsilon^{m}.
\end{align*}
On the other hand 
\begin{align*}
 & \sum_{k}\sum_{\hat{d}=0}^{p^{m}}\left|K^{m}v\left(k,\hat{d}|\hat{c}\right)-\E_{\phi_{1}\left(\tilde{\hat{c}}\right)}[\1_{\{k\}}\kappa_{\hat{d}}^{p^{*}}]\right|\\
 & =\sum_{k}\sum_{\hat{d}=0}^{p^{m}}\left|\frac{1}{K^{m}v(\hat{c})}\sum_{c'\in C'}K^{m}v(c',\hat{c})K^{m}v\left(k,\hat{d}|\hat{c},c{}^{\prime}\right)-\E_{\phi_{1}\left(\tilde{\hat{c}}\right)}[\1_{\{k\}}[\kappa_{\hat{d}}^{p^{m}}]\right|\\
 & \leq\sum_{c'\in C'}\frac{K^{m}v(c',\hat{c})}{K^{m}v(\hat{c})}\sum_{k}\sum_{\hat{d}=0}^{p^{m}}\left|K^{m}v\left(k,\hat{d}|\hat{c},c{}^{\prime}\right)-\E_{\phi_{1}\left(\tilde{\hat{c}}\right)}[\1_{\{k\}}[\kappa_{\hat{d}}^{p^{m}}]\right|=h_{\hat{c}}^{m}\left(\tilde{c}^{\prime}\right)\leq\varepsilon^{m}.
\end{align*}
Hence, 
\begin{align*}
 & \sum_{\hat{c}=1}^{m+1}\sum_{c^{\prime}}K^{m}v\left(\hat{c},c{}^{\prime}\right)\sum_{k,\hat{d}}\left|K^{m}v\left(k,\hat{d}|\hat{c},c{}^{\prime}\right)-K^{m}v\left(k,\hat{d}|\hat{c}\right)\right|\\
\leq & 2\varepsilon^{m}\sum_{\hat{c}=1}^{m}K^{m}v\left(\hat{c}\right)+K^{m}v\left(\hat{c}=m+1\right)\leq\delta^{m}v.
\end{align*}
Define the information structure 
\[
L^{m}v=\marg_{K\times\left\{ 0,...,p^{m}\right\} \times\left\{ 0,...,m+1\right\} }K^{m}v.
\]
Then, because $d\left(K^{m}v,v\right)=0$, Lemma \ref{lem: cond independence}
implies that 
\[
\sup_{g\in\CG}\left(\val\left(v,g\right)-\val\left(L^{m}v,g\right)\right)\leq\delta^{m}v.
\]

\emph{Proof of claim (\ref{eq:topologies 1})}. Observe that for each
$k,\hat{c},\hat{d}$, 
\[
\left(L^{m}u_{n}\right)\left(k,\hat{c},\hat{d}\right)=\E_{\tilde{u}_{n}}\left(\kappa_{\hat{c}}^{m}\left(\theta_{1}\right)\E_{\phi_{1}\left(\theta_{1}\right)}[\1_{\{k\}}\kappa_{\hat{d}}^{p^{m}}]\right).
\]
Because all the functions in the brackets above are continuous, the
weak convergence $\tilde{u}_{n}\rightarrow\tilde{u}$ implies that
\[
\left(L^{m}u_{n}\right)\left(k,\hat{c},\hat{d}\right)\rightarrow\left(L^{m}u\right)\left(k,\hat{c},\hat{d}\right).
\]
for each $k,\hat{c},\hat{d}$. Because the information structures
$L^{m}u_{n}$ and $L^{m}u$ are described on the same and finite spaces
of signals, the pointwise convergence implies 
\[
d\left(L^{m}u_{n},L^{m}u\right)\leq\left\Vert L^{m}u_{n}-L^{m}u\right\Vert \rightarrow0\text{ as \ensuremath{n\rightarrow\infty}.}
\]
Moreover, if $\hat{c}\in C^{m}$ and $\hat{d}\in D^{p^{m}}$, the
definitions imply that 
\[
\left(L^{m}u\right)\left(k,\hat{c},\hat{d}\right)\geq u\left(k,\hat{c},\hat{d}\right).
\]
Thus, 
\[
d\left(L^{m}u,u\right)\leq\|L^{m}u-u\|\leq2\left(u\left(C\backslash C^{m}\right)+u\left(D\backslash D^{p^{m}}\right)\right)\underset{n\rightarrow\infty}{\longrightarrow}0.
\]
It follows that 
\[
\delta^{m}u_{n}=\left(K^{m}u_{n}\right)\left(\hat{c}=m+1\right)\underset{n\rightarrow\infty}{\longrightarrow}\left(L^{m}u\right)\left(\hat{c}=m+1\right),
\]
and 
\[
\left(L^{m}u\right)\left(\hat{c}=m+1\right)=1-(L^{m}u)(C^{m}\times D^{p^{m}})\leq1-u(C^{m}\times D^{p^{m}})\leq u\left(C\backslash C^{m}\right)+u\left(D\backslash D^{p^{m}}\right).
\]
Together, we obtain for each $m,n$ 
\begin{align*}
 & \sup_{g\in\CG}\left(\val\left(u_{n},g\right)-\val\left(u,g\right)\right)\\
\leq & \sup_{g\in\CG}\left(\val\left(u_{n},g\right)-\val\left(L^{m}u_{n},g\right)\right)\\
 & +\sup_{g\in\CG}\left(\val\left(L^{m}u_{n},g\right)-\val\left(L^{m}u\right)\right)\\
 & +\sup_{g\in\CG}\left(\val\left(L^{m}u\right)-\val\left(u,g\right)\right)\\
\leq & \delta^{m}u_{n}+\left\Vert L^{m}u_{n}-L^{m}u\right\Vert +\left(u\left(C\backslash C^{m}\right)+u\left(D\backslash D^{p^{m}}\right)\right).
\end{align*}
Hence, 
\begin{align*}
\lim\sup_{n\rightarrow\infty}\sup_{g\in\CG}\left(\val\left(v,g\right)-\val\left(L^{m}v,g\right)\right)\leq3\left(u\left(C\backslash C^{m}\right)+u\left(D\backslash D^{p^{m}}\right)\right).
\end{align*}
When $m\rightarrow\infty$, the right hand side converges to $0$
as well.

\subsection{Proof of Proposition \ref{prop: approximate knowledge}}

Let $u^{\prime}\in\Delta\left(K\times(K_{C}\times C)\times(K_{D}\times D)\right)$
be defined so that $u=\marg_{K\times c\times D}u{}^{\prime}$ and
$u^{\prime}(\left\{ k_{C}=\kappa\left(c\right),k_{D}=\kappa\left(d\right)\right\} )=1$.
Because $u^{\prime}$ does not have any new information, we verify
(for instance using Lemma \ref{prop: joint info}) that $d\left(u,u^{\prime}\right)=0$.
We are going to show that $C$ is $16\varepsilon$-conditionally independent
from $K\times K_{D}$ given $K_{C}$. Notice that because $u$ exhibits
$\varepsilon$-knowledge, 
\begin{align*}
u^{\prime}\left\{ k_{C}\neq k\text{ or }k_{D}\neq k\right\}  & \leq u^{\prime}\left\{ k_{C}\neq k\right\} +u^{\prime}\left\{ k_{D}\neq k\right\} \\
 & \leq2\varepsilon+2\varepsilon=4\varepsilon.
\end{align*}
Thus, 
\begin{align*}
 & \sum_{k,k_{C},k_{D}}u{}^{\prime}\left(k_{C}\right)\sum_{c}\left|u^{\prime}\left(k,k_{D},c|k_{C}\right)-u{}^{\prime}\left(k,k_{D}|k_{C}\right)u{}^{\prime}\left(c|k_{C}\right)\right|\\
= & \sum_{k,k_{C},k_{D}}u{}^{\prime}\left(k,k_{C},k_{D}\right)\sum_{c}\left|u^{\prime}\left(c|k,k_{C},k_{D}\right)-\sum_{k^{\prime},k_{D}{}^{\prime}}u{}^{\prime}\left(c|k^{\prime},k_{C},k{}_{D}^{\prime}\right)u{}^{\prime}\left(k^{\prime},k{}_{D}^{\prime}|k_{C}\right)\right|\\
\leq & \sum_{k}u{}^{\prime}\left(k,k,k\right)\sum_{c}\left|u^{\prime}\left(c|k,k,k\right)-\sum_{k^{\prime},k_{D}{}^{\prime}}u{}^{\prime}\left(c|k^{\prime},k_{C}=k,k{}_{D}^{\prime}\right)u{}^{\prime}\left(k^{\prime},k{}_{D}^{\prime}|k_{C}=k\right)\right|\\
 & +2u^{\prime}\left\{ k_{C}\neq k\text{ or }k_{D}\neq k\right\} \\
\leq & \sum_{k}u{}^{\prime}\left(k,k,k\right)\sum_{c}\left|u^{\prime}\left(c|k,k,k\right)-u{}^{\prime}\left(c|k,k,k\right)\frac{u^{\prime}\left(k,k,k\right)}{u^{\prime}\left(k_{C}=k\right)}\right|\\
 & +\sum_{k}u{}^{\prime}\left(k,k,k\right)\sum_{c}\sum_{k^{\prime}\neq k\text{, or }k{}_{D}^{\prime}\neq k}\left|u^{\prime}\left(c|k,k_{C}=k,k{}_{D}\right)u{}^{\prime}\left(k^{\prime},k{}_{D}^{\prime}|k_{C}=k\right)\right|\\
 & +2u^{\prime}\left\{ k_{C}\neq k\text{ or }k_{D}\neq k\right\} \\
\leq & \sum_{k}u{}^{\prime}\left(k,k,k\right)\left|1-\frac{u^{\prime}\left(k,k,k\right)}{u^{\prime}\left(k_{C}=k\right)}\right|+3u^{\prime}\left\{ k_{C}\neq k\text{ or }k_{D}\neq k\right\} \\
\leq & \sum_{k}\left|u^{\prime}\left(k_{C}=k\right)-u^{\prime}\left(k,k,k\right)\right|+3u^{\prime}\left\{ k_{C}\neq k\text{ or }k_{D}\neq k\right\} \\
\leq & 4u^{\prime}\left\{ k_{C}\neq k\text{ or }k_{D}\neq k\right\} \leq16\varepsilon.
\end{align*}

Because an analoguous result applies to the information of the other
player, Lemma \ref{prop: joint info} shows that 
\[
d\left(u^{\prime},v^{\prime}\right)\leq16\varepsilon,
\]
where $v^{\prime}=\marg_{K\times K_{C}\times K_{D}}$. Because 
\begin{align*}
d\left(v,v^{\prime}\right)\leq & \sum_{k,k_{C},k_{D}}\left|v\left(k,k_{C},k_{D}\right)-v{}^{\prime}\left(k,k_{C},k_{D}\right)\right|\\
\leq & 2v^{\prime}\left\{ k_{C}\neq k\text{ or }k_{D}\neq k\right\} =2u{}^{\prime}\left\{ k_{C}\neq k\text{ or }k_{D}\neq k\right\} \leq4\varepsilon,
\end{align*}
the triangle inequality implies that 
\[
d\left(u,v\right)\leq d\left(u,u^{\prime}\right)+d\left(u^{\prime},v^{\prime}\right)+d\left(v,v^{\prime}\right)\leq20\varepsilon.
\]

\section{Proof of Theorem \ref{thm3}.}

$N$ is a very large even integer to be fixed later, and we write
$A=C=D=\{1,...,N\}$, with the idea of using $C$ while speaking of
actions or signals of player 1, and using $D$ while speaking of actions
and signals of player 2. We fix $\varepsilon$ and $\alpha$, to be
used later, such that 
\[
0<\varepsilon<\frac{1}{10(N+1)^{2}},\text{ and }\alpha=\frac{1}{25}.
\]
We will consider a Markov chain on $A$, satisfying:

$\bullet$ the law of the first state of the Markov chain is uniform
on $A$,

$\bullet$ for each $a$ in $A$, there are exactly $N/2$ elements
$b$ in $A$ such that $\nu(b|a)=2/N$ : given that the current state
of the Markov chain is $a$, the law of the next state is uniform
on a subset of states of size $N/2$,

$\bullet$ and few more conditions, to be be defined later.

The rest of the proof is split in 4 parts: we first define the information
structures $(u^{l})_{l\geq1}$ and some payoff structures $(g^{p})_{p\geq1}$.
Next, we show that the Markov property of the construction ensures
that the hierarchies of beliefs are preserved. Then we define two
conditions $UI1$ and $UI2$ on the information structures and show
that they imply the conclusions of theorem \ref{thm3}. Finally, we
show, via the probabilistic method, the existence of a Markov chain
$\nu$ satisfying all our conditions.

\subsection{Information and payoff structures $(u^{l})_{l\protect\geq1}$ and
$(g^{l})_{l\protect\geq1}$.}

For $l\geq1$, define the information structure $u^{l}\in\Delta(K\times C^{l}\times D^{l})$
by: for each state $k$ in $K$, signal $c=(c_{1},...,c_{l})$ in
$C^{l}$ of player 1 and signal $d=(d_{1},...,d_{l})$ in $D^{l}$
for player 2, 
\[
u^{l}(k,c,d)=\nu(c_{1},d_{1},c_{2},d_{2},...,c_{l},d_{l})\left(\frac{c_{1}}{N+1}\mathbf{1}_{k=1}+\frac{1-c_{1}}{N+1}\mathbf{1}_{k=0}\right).
\]
The following interpretation of $u^{l}$ holds: first select $(a_{1},a_{2},...,a_{2l})=(c_{1},d_{1},...,c_{l},d_{l})$
in $A^{2l}$ according to the Markov chain $\nu$ (i.e. uniformly
among the nice sequences of length $2l$), then tell $(c_{1},c_{2},...,c_{l})$
(the elements of the sequence with odd indices) to player 1, and $(d_{1},d_{2},...,d_{l})$
(the elements of the sequence with even indices) to player 2. Finally
choose the state $k=1$ with probability $c_{1}/(N+1)$, and state
$k=0$ with the complement probability $1-c_{1}/(N+1)$.

Notice that the definition is not symmetric among players, the first
signal $c_{1}$ of player 1 is uniformly distributed and plays a particular
role. The marginal of $u^{l}$ on $K$ is uniform, and the marginal
of $u^{l+1}$ over $(K\times C^{l}\times V^{l})$ holds : condition
${\it 2)}$ of theorem \ref{thm3} is satisfied.

A sequence $(a_{1},...,a_{l})$ of length $l\geq1$ is said to be
\textit{nice} it it is in the support of the Markov chain: $\nu(a_{1},...,a_{l})>0$.
For instance any sequence of length 1 is nice, and $N^{2}/2$ sequences
of length 2 are nice. Consider a sequence $(a_{1},...,a_{l})$ of
elements of $A$ which is not nice, i.e. such that $\nu(a_{1},...,a_{l})=0$.
We say that the sequence is\emph{ not nice because of player 1} if
$\min\{t\in\{1,...,l\},\nu(a_{1},...,a_{t})=0\}$ is odd, and \emph{not
nice because of player 2} if $\min\{t\in\{1,...,l\},\nu(a_{1},...,a_{t})=0\}$
is even. A sequence $(a_{1},...,a_{l})$ is now either nice, or not
nice because of player 1, or not nice because of player 2. A sequence
of length 2 is either nice, or not nice because of player 2.

For $p\geq1$, define the payoff structure $g^{p}:K\times C^{p}\times D^{p-1}\to[-1,1]$
such that for all $k$ in $K$, $c'=(c'_{1},...,c'_{p})$ in $C^{p}$,
$d'=(d'_{1},...,d'_{p-1})$ in $D^{p-1}$ : 
\begin{eqnarray*}
g^{p}(k,c',d') & = & g_{0}(k,c'_{1})+h^{p}(c',d'),\;\;{\mathnormal{w}ith}\\
g_{0}(k,c'_{1}) & = & -{\left(k-\frac{u'_{1}}{N+1}\right)}^{2}+\frac{N+2}{6(N+1)},\;\\
h^{p}(c',d') & = & \left\{ \begin{array}{ccl}
\varepsilon & \mbox{if} & (c'_{1},d'_{1},...,c'_{p})\;{is}\;{\it nice},\\
5\varepsilon & \mbox{if} & (c'_{1},d'_{1},...,c'_{p})\;{is}\;not\;{\it nice}\;because\;of\;player\;2,\\
-5\varepsilon & \mbox{if} & (c'_{1},d'_{1},...,c'_{p})\;{is}\;not\;{\it nice}\;because\;of\;player\;1.
\end{array}\right.
\end{eqnarray*}

One can check that $|g^{p}|\leq5/6+5\varepsilon\leq8/9$. Regarding
the $g_{0}$ part of the payoff, consider a decision problem for player
1 where: $c_{1}$ is selected uniformly in $A$ and the state is selected
to be $k=1$ with probability $c_{1}/(N+1)$ and $k=0$ with probability
$1-c_{1}/(N+1)$. Player 1 observes $c_{1}$ but not $k$, and he
choose $c'_{1}$ in $A$ and receive payoff $g_{0}(k,c'_{1})$. We
have $\frac{c_{1}}{N+1}g_{0}(1,c'_{1})+(1-\frac{c_{1}}{N+1})g_{0}(0,c'_{1})$
$=$ $\frac{1}{(N+1)^{2}}(c'_{1}(2c_{1}-c'_{1})+(N+1)((N+2)/6-c_{1}))$.
To maximize this expected payoff, it is well known that player 1 should
play his belief on $k$, i.e. $c'_{1}=c_{1}$. Moreover, if player
1 chooses $c'_{1}\neq c_{1}$, its expected loss from not having chosen
$c_{1}$ is at least $\frac{1}{(N+1)^{2}}\geq10\varepsilon$. And
the constant $\frac{N+2}{6(N+1)}$ has been chosen such that the value
of this decision problem is 0.

Consider now $l\geq1$ and $p\geq1$. By definition, the Bayesian
game $\Gamma(u^{k},g^{p})$ is played as follows: first, $(c_{1},d_{1},...,c_{l},d_{l})$
is selected according to the law $\nu$ of the Markov chain, player
1 learns $(c_{1},...,c_{l})$, player 2 learns $(d_{1},...,d_{l})$
and the state is $k=1$ with probability $c_{1}/(N+1)$ and $k=0$
otherwise. Then \textit{simultaneously} player 1 chooses $c'$ in
$C^{p}$ and player 2 chooses $d'$ in $D^{p-1}$, and finally the
payoff to player 1 is $g^{p}(k,c',d')$. Notice that by the previous
paragraph about $g_{0}$, it is always strictly dominant for player
1 to report correctly his first signal, i.e. to choose $c'_{1}=c_{1}$.
We will show in the next section that if $l\geq p$ and player 1 simply
plays the sequence of signals he received, player 2 can not do better
than also reporting truthfully his own signals, leading to a value
not lower than the payoff for nice sequences, that is $\varepsilon$.
On the contrary in the game $\Gamma(u^{l},g^{l+1})$, player 1 has
to report not only the $l$ signals he has received, but also an extra-signal
$c'_{l+1}$ that he has to guess. In this game we will prove that
if player 2 truthfully reports his own signals, player 1 will incur
the payoff $-5\varepsilon$ with probability at least (approximately)
1/2, and this will result in a low value. These intuitions will prove
correct in the next section, under some conditions $UI1$ and $UI2$.

\subsection{Higher order beliefs\label{subsec:Higher-order-beliefs}}

Recall that $n$-order beliefs are defined inductively as conditional
laws. Precisely, the first order beliefs $\theta_{1}^{i}$ of player
$i$ is the conditional law of $k$ given her signal. The $n$-order
belief $\theta_{n}^{i}$ of player $i$ is the conditional law of
$(\omega,\theta_{n-1}^{-i})$ given her signal. In this construction,
conditional laws are seen as random variables taking values in space
of probability measures. 
\begin{lem}
\label{lem_beliefs} For all $l>p$, the joint distribution of $(\omega,\theta_{2p}^{1},\theta_{2p}^{2})$
induced by the information structure $u^{l}$ is independent of $l$. 
\end{lem}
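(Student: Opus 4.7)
The plan is to exploit the Markov property of the chain $(c_1,d_1,c_2,d_2,\ldots)$, together with the fact that $k$ is a noisy function of $c_1$ alone, to show by induction on $n$ that the hierarchy $\theta_n^1$ (respectively $\theta_n^2$) is a measurable function of a bounded prefix $(c_1,\ldots,c_{a_n})$ of player $1$'s signals (respectively $(d_1,\ldots,d_{b_n})$ of player $2$'s signals), where $a_n,b_n\in\N$ do not depend on $l$. Once such bounds are established, the joint law of $(k,\theta_{2p}^1,\theta_{2p}^2)$ under $u^l$ depends only on the marginal of $u^l$ on the finite block $(k,c_1,\ldots,c_{a_{2p}},d_1,\ldots,d_{b_{2p}})$, and this marginal is identical across all $l\geq\max(a_{2p},b_{2p})$ by consistency of the Markov chain.

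The base case $a_1=b_1=1$ is immediate from the construction: $\theta_1^1=\mathrm{Law}(k\mid c_1,\ldots,c_l)=\mathrm{Law}(k\mid c_1)$, and applying the Markov property at the vertex $d_1$ gives $\theta_1^2=\mathrm{Law}(k\mid d_1)$. For the induction step, suppose $\theta_{n-1}^2$ is a function of $(d_1,\ldots,d_{b_{n-1}})$. Then $\theta_n^1$ is determined by
\[
\mathrm{Law}(k,d_1,\ldots,d_{b_{n-1}}\mid c_1,\ldots,c_l).
\]
By the Markov property applied at $c_{b_{n-1}+1}$, the block $(c_{b_{n-1}+2},\ldots,c_l)$ is conditionally independent of $(k,c_1,d_1,\ldots,c_{b_{n-1}},d_{b_{n-1}})$ given $c_{b_{n-1}+1}$, so the conditional law above equals $\mathrm{Law}(k,d_1,\ldots,d_{b_{n-1}}\mid c_1,\ldots,c_{b_{n-1}+1})$, and one may take $a_n=b_{n-1}+1$. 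A symmetric computation using the Markov separator $d_{a_{n-1}}$ between $(c_1,d_1,\ldots,c_{a_{n-1}})$ and the rest of the chain yields $b_n=a_{n-1}$.

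Unfolding the recursion from $a_1=b_1=1$ gives $a_{2p}=p+1$ and $b_{2p}=p$. Hence, for every $l>p$, the random vector $(k,\theta_{2p}^1,\theta_{2p}^2)$ is a deterministic function, not depending on $l$, of $(k,c_1,\ldots,c_{p+1},d_1,\ldots,d_p)$, whose joint distribution under $u^l$ is the same for all $l>p$. The lemma follows.

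The main obstacle is a careful book-keeping of the Markov manipulations, in particular the asymmetry $a_n=b_{n-1}+1$ versus $b_n=a_{n-1}$: because player $1$'s signals sit at the odd positions of the chain while player $2$'s sit at the even positions, player $1$ always needs one extra signal $c_{b_{n-1}+1}$ to screen off the observed $d$-block from the unobserved future, whereas player $2$'s signal $d_{a_{n-1}}$ already lies immediately after $c_{a_{n-1}}$ and plays this role automatically.
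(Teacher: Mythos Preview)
Your proof is correct and follows essentially the same approach as the paper's. Both arguments exploit the Markov property of $(k,c_1,d_1,c_2,d_2,\ldots)$ to show inductively that $\theta_n^1$ depends only on a prefix of the $c$-signals and $\theta_n^2$ only on a prefix of the $d$-signals; your recursion $a_n=b_{n-1}+1$, $b_n=a_{n-1}$ yields exactly the paper's bounds $\theta_{2p}^1=\theta_{2p}^1(c_1,\ldots,c_{p+1})$ and $\theta_{2p}^2=\theta_{2p}^2(d_1,\ldots,d_p)$, after which both proofs conclude by consistency of the marginals for $l>p$.
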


\begin{proof}
We use the notation ${\mathcal{L}}(X|Y)$ for the conditional law
of $X$ given $Y$, and the identification $(a_{1},...,a_{2l})=(c_{1},d_{1},....,c_{l},d_{l})$.
At first, note that by construction $k$ and $(a_{2},....,a_{2l})$
are conditionally independent given $a_{1}$, so that the sequence
$(k,a_{1},a_{2},...,a_{2l})$ is a Markov process. It follows that
$\theta_{1}^{1}={\mathcal{L}}(k|c_{1},...,c_{l})$ =${\mathcal{L}}(k|c_{1}).$
The Markov property implies that 
\[
\theta_{1}^{2}={\mathcal{L}}(k|d_{1},....,d_{l})={\mathcal{L}}(k|d_{1}),\;\theta_{2}^{2}={\mathcal{L}}(d,\theta_{1}^{1}(c_{1})|d_{1},....,d_{l})={\mathcal{L}}(k,\theta_{1}^{1}(c_{1})|d_{1}),
\]
and therefore we have 
\[
\theta_{2}^{1}={\mathcal{L}}(k,\theta_{1}^{2}(d_{1})|c_{1},....,c_{l})={\mathcal{L}}(k,\theta_{1}^{2}(d_{1})|c_{1},c_{2}).
\]
By induction, and applying the same argument (future and past of a
Markov process are conditionally independent given the current position),
we deduce that for all $n\geq1$, 
\[
\theta_{2n}^{1}={\mathcal{L}}(k,\theta_{2n-1}^{2}|c_{1},....,c_{\min(l,n+1)}),\;\;\;\theta_{2n+1}^{1}={\mathcal{L}}(k,\theta_{2n}^{2}|c_{1},....,c_{\min(l,n+1)}),
\]
\[
\theta_{2n-1}^{2}={\mathcal{L}}(k,\theta_{2n-2}^{1}|d_{1},....,d_{\min(l,n)}),\;\;\;\theta_{2n}^{2}={\mathcal{L}}(k,\theta_{2n-1}^{1}|d_{1},....,d_{\min(l,n)}).
\]
As a consequence, for all $n\leq p$, these conditional laws do not
depend on which $u^{l}$ we are using as soon as $l>p$. 
\end{proof}

\subsection{Conditions UI and values}

To prove that the intuitions of the previous paragraph are correct,
we need to ensure that players have incentives to report their true
signals, so we need additional assumptions on the Markov chain. \\

\noindent \textbf{Notations and definition:} Let $l\geq1$, $m\geq0$,
$c=(c_{1},...,c_{l})$ in $C^{l}$ and $d=(d_{1},...,d_{m})$ in $D^{m}$.
We write :

\centerline{%
\begin{tabular}{lcll}
$a^{2q}(c,d)$  & =  & $(c_{1},d_{1},....,c_{q},d_{q})\in A^{2q}$  & {for each } $q\leq\min\{l,m\}$, \tabularnewline
$a^{2q+1}(c,d)$  & =  & $(c_{1},d_{1},....,c_{q},d_{q},c_{q+1})\in A^{2q+1}$  & { for each }$q\leq\min\{l-1,m\}$. \tabularnewline
\end{tabular}}

\noindent For $r\leq\min\{2l,2m+1\}$,

\centerline{we say that $c$ and $d$ are \emph{nice at level }$r$,
and we write $c\smile_{r}d,$ if $a^{r}(c,d)$ is nice.}


\vspace{0.5cm}

In the next definition we consider an information structure $u^{l}\in\Delta(K\times C^{l}\times D^{l})$
and denote by ${\tilde{c}}$ and ${\tilde{d}}$ the respective random
variables of the signals of player 1 and 2. 
\begin{defn}
\label{defUI}We say that the \emph{conditions $UI1$ are satisfied
}if for all $l\geq1$, all ${c}=({c}_{1},...,{c}_{l})$ in $C^{l}$
and $c^{\prime}=(c_{1}^{\prime},...,c_{l+1}^{\prime})$ in $C^{l+1}$
such that ${c}_{1}=c_{1}^{\prime}$, we have 
\begin{equation}
u^{l}\left(c^{\prime}\smile_{2l+1}\tilde{d}\;\big|\;\tilde{c}={c},c^{\prime}\smile_{2l}\tilde{d}\right)\in[1/2-\alpha,1/2+\alpha]\label{eq61}
\end{equation}
and for all $m\in\{1,...,l\}$ such that ${c}_{m}\neq c_{m}^{\prime}$,
for $r=2m-2,2m-1$, 
\begin{equation}
u^{l}\left(c^{\prime}\smile_{r+1}\tilde{d}\;\big|\;\tilde{c}=c,c^{\prime}\smile_{r}\tilde{d}\right)\in[1/2-\alpha,1/2+\alpha].\label{eq62}
\end{equation}
\end{defn}

We say that the\emph{ conditions $UI2$ are satisfied} if for all
$1\leq p\leq l$, for all ${d}\in D^{l}$, for all $d^{\prime}\in D^{p-1}$,
for all $m\in\{1,...,p-1\}$ such that ${d}_{m}\neq d_{m}^{\prime}$,
for $r=2m-1,2m$ 
\begin{equation}
u^{l}\left(\tilde{c}\smile_{r+1}d^{\prime}|\tilde{d}={d},\tilde{c}\smile_{r}d^{\prime}\right)\in[1/2-\alpha,1/2+\alpha].\label{eq63}
\end{equation}

To understand the conditions $UI1$, consider the Bayesian game $\Gamma(u^{l},g^{l+1})$,
and assume that player 2 truthfully reports his sequence of signals
and that player 1 has received the signals $(c_{1},...,c_{l})$ in
$C^{l}$. (\ref{eq61}) states that if the sequence of reported signals
$(c'_{1},\tilde{d}_{1},...,c'_{l},\tilde{d}_{l})$ is nice at level
$2l$, then whatever the last reported signal $c'_{l+1}$, the conditional
probability that $(c'_{1},\tilde{d}_{1},...,c'_{l},\tilde{d}_{l},c'_{l+1})$
is still nice is in $[1/2-\alpha,1/2+\alpha]$, i.e. close to 1/2.
Regarding (\ref{eq62}), first notice that if $c'=c$, then by construction
$(c'_{1},\tilde{d}_{1},...,c'_{l},\tilde{d}_{l})$ is nice and $u^{l}\left(c^{\prime}\smile_{r+1}\tilde{d}\;\big|\;\tilde{c}=c,c^{\prime}\smile_{r}\tilde{d}\right)=u^{l}\left(c\smile_{r+1}\tilde{d}\;\big|\;\tilde{c}=c\right)=1$
for each $r=1,...,2l-1$. Assume now that for some $m=1,...,l$, player
1 misreports his $m^{th}$-signal, i.e. reports $c'_{m}\neq c_{m}$.
(\ref{eq62}) requires that given that the reported signals were nice
so far (at level $2m-2$), the conditional probability that the reported
signals are not nice at level $2m-1$ (integrating $c'_{m}$) is close
to 1/2, and moreover if the reported signals are nice at this level
$2m-1$, adding the next signal $\tilde{d}_{m}$ of player 2 has probability
close to 1/2 to keep the reported sequence nice. Conditions $UI2$
have a similar interpretation, considering the Bayesian games $\Gamma(u^{l},g^{p})$
for $p\leq l$, assuming that player 1 reports truthfully his signals
and that player 2 plays $d'$ after having received the signals $d$. 
\begin{prop}
\label{pro2} Conditions $UI1$ and $UI2$ imply : 
\begin{eqnarray}
\forall l\geq1,\forall p\in\{1,...,l\}, & \val(u^{l},g^{p})\geq\varepsilon.\label{eq64}\\
\forall l\geq1, & \val(u^{l},g^{l+1})\leq-\varepsilon.\label{eq66}
\end{eqnarray}
\end{prop}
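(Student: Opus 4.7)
Proposition \ref{pro2} splits into two bounds, each proved by exhibiting an explicit ``truthful'' strategy for the appropriate player. The additive decomposition $g^p = g_0 + h^p$ is exploited throughout: the $g_0$-part is a single-agent decision problem with value $0$, uniquely optimized at $c'_1 = c_1$ with any deviation costing at least $1/(N+1)^2 \geq 10\varepsilon$, while the $h^p$-part is the ``nice sequence test'' and is bounded by $5\varepsilon$ in absolute value.

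For (\ref{eq64}), my plan is for player~$1$ to play $c' = (\tilde{c}_1,\ldots,\tilde{c}_p)$, which is feasible since $p\leq l$ and yields $\E[g_0] = 0$. Fix any deterministic strategy $d'(\tilde{d})$ of player~$2$, condition on $\tilde{d} = d$, and set $M = \{m : d'_m \neq d_m\}$. The key structural observation, using the Markov property of $\nu$, is that at every position $2m$ or $2m+1$ with $m \notin M$ the added element is the true Markov value, so conditioning on $\tilde{d}=d$ already forces the relevant pair to lie in $S$ and niceness is preserved a.s.; for each $m \in M$, $UI2$ (with $r=2m-1$ and $r=2m$) bounds the conditional probability of staying nice at the two positions $(2m, 2m+1)$ by elements of $[q,Q]:=[\tfrac{1}{2}-\alpha,\tfrac{1}{2}+\alpha]$. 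Let $\tau$ be the first failure index and $W := P(\tau=\infty) + 5\bigl(P(\tau\text{ even}) - P(\tau\text{ odd})\bigr)$, so that $\E[h^p\mid\tilde{d}=d] = \varepsilon W$. I prove $W \geq 1$ by induction on $|M|$. The base case $|M|=0$ gives $W=1$. In the inductive step, the first risky pair has survival probabilities $(\rho_1, \rho_2) \in [q,Q]^2$ at an even position (reward coefficient $+5$) then an odd one (coefficient $-5$), whence
\[
W \geq 5(1-\rho_1) - 5\rho_1(1-\rho_2) + \rho_1\rho_2\, W_{\text{rem}} \geq 5 - 10\rho_1 + 6\rho_1\rho_2,
\]
which is $\geq 1$ iff $\rho_1(10 - 6\rho_2) \leq 4$. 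The maximum over $[q,Q]^2$, attained at $(\rho_1,\rho_2)=(Q,q)$, equals $Q(10-6q) = \tfrac{7}{2} + 10\alpha + 6\alpha^2 \approx 3.91 \leq 4$ for $\alpha = 1/25$.

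For (\ref{eq66}), my plan is for player~$2$ to play $d' = \tilde{d}$. Let $p^* := P(c'_1 \neq \tilde{c}_1)$; the $g_0$-dominance gives $\E[g_0]\leq -10\varepsilon p^*$. Conditional on $c'_1 = \tilde{c}_1$, I apply the symmetric analysis using $UI1$: with $M = \{m \leq l : c'_m \neq \tilde{c}_m\}$, condition (\ref{eq62}) makes $(2m-1, 2m)$ risky for each $m \in M$, and condition (\ref{eq61}) makes the final position $2l+1$ (the guess $c'_{l+1}$) risky, while non-risky positions preserve niceness a.s.\ by the same Markov argument. The resulting chain of $2|M|+1$ risky positions alternates odd-even-$\cdots$-odd, with odd ones blaming player~$1$. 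The analogous induction shows $W \leq -1$: the base case $|M|=0$ yields $W = 6\rho - 5 \leq 6Q-5 = -2+6\alpha < -1$; the inductive step, now with first-block coefficients $(-5, +5)$, gives
\[
W \leq -5(1-\rho_1) + 5\rho_1(1-\rho_2) + \rho_1\rho_2\cdot(-1) = -5 + 10\rho_1 - 6\rho_1\rho_2 \leq -1,
\]
again by $\rho_1(10 - 6\rho_2)\leq 4$. Hence $\E[h^{l+1} \mid c'_1 = \tilde{c}_1]\leq -\varepsilon$ and, combining,
\[
\E[g^{l+1}] \leq -10\varepsilon p^* + \bigl(-\varepsilon(1-p^*) + 5\varepsilon p^*\bigr) = -\varepsilon - 4\varepsilon p^* \leq -\varepsilon.
\]

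The main obstacle is the structural identification of the risky positions: one must show that conditioning on the opponent's full signal vector restricts the support of each of one's own signals to values consistent with \emph{both} the preceding and following true transitions in $S$, so that inserting a truthful move at a non-deviating index into any nice prefix preserves niceness almost surely. Once this Markov-based claim is settled, both directions of the proposition reduce to the same scalar inequality $Q(10-6q)\leq 4$, which is a tight calibration of $\alpha = 1/25$ and of the $1{:}5$ payoff ratio in $h^p$; the rest is a routine propagation of the $UI1$ and $UI2$ bounds along the chain.
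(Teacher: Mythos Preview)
Your proposal is correct and follows essentially the same approach as the paper: both parts exhibit the truthful strategy for the appropriate player, reduce to controlling $\E[h^p]$ conditional on the opponent's signal, and then propagate the $UI$ bounds along the chain of ``risky'' positions. The paper organizes this as a backward induction on the position index (conditioning on $A_{2n-1}$ or $B_{2n}$), while you run a forward induction on $|M|$ after collapsing the non-risky positions; these are equivalent once the Markov-based claim (that truthful insertions preserve niceness a.s.\ given the opponent's signal) is established, and that claim is immediate from the support condition of the chain. Your joint parametrization by $(\rho_1,\rho_2)\in[q,Q]^2$ yields the slightly sharper recursion bound $3/2-10\alpha-6\alpha^2$ in place of the paper's $3/2-11\alpha-4\alpha^2$, but both clear $1$ at $\alpha=1/25$, and your explicit bookkeeping of the event $\{c'_1\neq\tilde c_1\}$ via $p^*$ replaces the paper's ``w.l.o.g.\ $c'_1=c_1$'' with the same $-\varepsilon-4\varepsilon p^*\le-\varepsilon$ conclusion.
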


As a consequence of this proposition, under conditions $UI1$ and
$UI2$ we easily obtain condition $1)$ of theorem \ref{thm3} : 
\begin{cor}
\label{cor2} If $l\neq p$ then $d(u^{l},u^{p})\geq2\varepsilon.$ 
\end{cor}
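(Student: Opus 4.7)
The corollary follows almost immediately from Proposition \ref{pro2}, so the plan is essentially bookkeeping rather than new argument. By the symmetry of the value-based distance $d$, I may assume without loss of generality that $l > p$, i.e.\ $l \geq p+1$. I will exhibit a single payoff structure on which the two values are separated by at least $2\varepsilon$, and then invoke the definition $d(u^{l},u^{p}) = \sup_{g\in\CG}|\val(u^{l},g)-\val(u^{p},g)|$.

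The natural candidate payoff structure is $g^{p+1}$. This choice is guided by the two parts of Proposition \ref{pro2}: the first part (\ref{eq64}) gives a lower bound of $\varepsilon$ on $\val(u^{\ell},g^{q})$ whenever $q \leq \ell$, while the second part (\ref{eq66}) gives an upper bound of $-\varepsilon$ on $\val(u^{\ell},g^{\ell+1})$ where $g^{\ell+1}$ asks player 1 to report one more signal than he has actually observed. Choosing $g^{p+1}$ puts $u^{p}$ in the ``over-reporting'' regime of (\ref{eq66}) while leaving $u^{l}$ in the ``truthful reporting is feasible'' regime of (\ref{eq64}), since $p+1 \leq l$.

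Concretely, the computation I would carry out is
\begin{align*}
\val(u^{l},g^{p+1}) &\geq \varepsilon \qquad\text{by (\ref{eq64}) with }(l,p)\text{ replaced by }(l,p+1),\\
\val(u^{p},g^{p+1}) &\leq -\varepsilon \qquad\text{by (\ref{eq66}) with }l\text{ replaced by }p.
\end{align*}
Subtracting and taking absolute values gives $|\val(u^{l},g^{p+1})-\val(u^{p},g^{p+1})| \geq 2\varepsilon$, so $d(u^{l},u^{p}) \geq 2\varepsilon$ by the very definition of the value-based distance. There is no real obstacle here: all the work has already been done in Proposition \ref{pro2}, whose proof in turn rests on the mixing conditions $UI1,UI2$ verified for the Markov chain $\nu$. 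The only thing to double-check is the index bookkeeping, namely that $p+1$ is a legitimate upper index for a game played on $u^{l}$ (which is guaranteed by $p+1 \leq l$) and that $g^{l+1}$ in (\ref{eq66}) matches $g^{p+1}$ when the role of $l$ in that statement is played by our $p$.
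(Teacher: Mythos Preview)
Your proof is correct and follows exactly the paper's own approach: assume $l>p$, pick the payoff structure $g^{p+1}$, and combine (\ref{eq64}) (which gives $\val(u^{l},g^{p+1})\geq\varepsilon$ since $p+1\leq l$) with (\ref{eq66}) (which gives $\val(u^{p},g^{p+1})\leq-\varepsilon$) to obtain the $2\varepsilon$ separation.
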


\begin{proof}
Assume $l>p$, then $d(u^{l},u^{p})\geq\val(u^{l},g^{p+1})-\val(u^{p},g^{p+1})\geq\varepsilon-(-\varepsilon).$ 
\end{proof}
\vspace{0.5cm}

\noindent \textbf{Proof of proposition \ref{pro2}.} We assume that
$UI1$ and $UI2$ hold. We fix $l\geq1$, work on the probability
space $K\times C^{l}\times D^{l}$ equipped with the probability $u^{l}$,
and denote by $\tilde{c}$ and $\tilde{d}$ the random variables of
the signals received by the players.

1) We first prove (\ref{eq64}), and consider the game $\Gamma(u^{l},g^{p})$
with $p\in\{1,...,l\}$. We assume that player 1 chooses the truthful
strategy. Fix $d=(d_{1},...,d_{l})$ in $D^{l}$ and $d'=(d'_{1},...,d'_{p-1})$
in $D^{p-1}$, and assume that player 2 has received the signal $d$
and chooses to report $d'$.



Define the non-increasing sequence of events: 
\[
A_{n}=\{\tilde{c}\smile_{n}d'\}.
\]
We will prove by backward induction that: 
\begin{equation}
\forall n=1,...,p,\;\;\mathbb{E}[h^{p}(\tilde{c},d')|\tilde{d}=d,A_{2n-1}]\geq\varepsilon.\label{eq65}
\end{equation}

If $n=p$, $h^{p}(\tilde{c},d')=\varepsilon$ on the event $A_{2p-1}$,
implying the result. Assume now that for some $n$ such that $1\leq n<p$,
we have : $\mathbb{E}[h^{p}(\tilde{c},d')|\tilde{d}=d,A_{2n+1}]\geq\varepsilon.$
Since we have a non-increasing sequence of events, $\1{}_{A_{2n-1}}=\1_{A_{2n+1}}+\1_{A_{2n-1}}\1_{A_{2n}^{c}}+\1_{A_{2n}}\1_{A_{2n+1}^{c}},$
so by definition of the payoffs, $h^{p}(\tilde{c},d')\1_{A_{2n-1}}=h^{p}(\tilde{c},d')\1_{A_{2n+1}}+5\varepsilon\1_{A_{2n-1}}\1_{A_{2n}^{c}}-5\varepsilon\1_{A_{2n}}\1_{A_{2n+1}^{c}}.$

First assume that $d'_{n}=d_{n}$. By construction of the Markov chain,
$u^{l}(A_{2n+1}|A_{2n-1},\tilde{d}=d)=1$, implying that $u^{l}(A_{2n+1}^{c}|A_{2n-1},\tilde{d}=d)=u^{l}(A_{2n}^{c}|A_{2n-1},\tilde{d}=d)=0$.
As a consequence, 
\begin{eqnarray*}
\mathbb{E}[h^{p}(\tilde{c},d')|\tilde{d}=d,A_{2n-1}] & = & \mathbb{E}[h^{p}(\tilde{c},d')\1_{A_{2n+1}}|\tilde{d}=d,A_{2n-1}]\\
 & = & \mathbb{E}[\mathbb{E}[h^{p}(\tilde{c},d')|\tilde{d}=d,A_{2n+1}]\1_{A_{2n+1}}|\tilde{d}=d,A_{2n-1}]\\
 & \geq\varepsilon.
\end{eqnarray*}
Assume now that $d'_{n}\neq d_{n}$. Assumption UI2 implies that :
\begin{eqnarray*}
u^{l}(A_{2n}^{c}|A_{2n-1},\tilde{d}=d) & \geq & 1/2-\alpha,\\
u^{l}(A_{2n}\cap A_{2n+1}^{c}|A_{2n-1},\tilde{d}=d) & \leq & (1/2+\alpha)^{2},\\
u^{l}(A_{2n+1}|A_{2n-1},\tilde{d}=d) & \geq & (1/2-\alpha)^{2}.
\end{eqnarray*}
It follows that : 
\begin{align*}
\mathbb{E}[h^{p}(\tilde{c},d'|\tilde{d})=d,A_{2n-1}] & =\mathbb{E}[\mathbb{E}[h^{p}(\tilde{c},d')|\tilde{d}=d,A_{2n+1}]\1_{A_{2n+1}}|\tilde{d}=d,A_{2n-1}]\\
 & \quad+5\varepsilon u^{l}(A_{2n}^{c}|A_{2n-1},\tilde{d}=d)-5\varepsilon u^{l}(A_{2n}\cap A_{2n+1}^{c}|A_{2n-1},\tilde{d}=d)\\
 & \geq\varepsilon\,(\frac{1}{4}-\alpha+\alpha^{2})+5\,\varepsilon\,(\frac{1}{2}-\alpha)-5\,\varepsilon\,(\frac{1}{4}+\alpha+\alpha^{2})\\
 & =\varepsilon\,(\frac{3}{2}-11\alpha-4\alpha^{2})\geq\varepsilon,
\end{align*}
And (\ref{eq65}) follows by backward induction.

Since $A_{1}$ is an event which holds almost surely, we deduce that
$\mathbb{E}[h^{p}(\tilde{c},d')|\tilde{d}=d]\geq\varepsilon.$ Hence
the truthful strategy of player 1 guarantees the payoff $\varepsilon$
in $\Gamma(u^{l},g^{p})$.\\

2) We now prove (\ref{eq66}) and consider the Bayesian game $\Gamma(u^{l},g^{l+1})$,
assuming that player 2 chooses the truthful strategy. Fix $c=(c_{1},...,c_{l})$
in $C^{l}$ and $c'=(c'_{1},...,c'_{l-1})$ in $C^{l-1}$, and assume
that player 1 has received the signal $c$ and chooses to report $c'$.
We will show that the expected payoff of player 1 is not larger than
$-\varepsilon$, and assume w.l.o.g. that $c'_{1}=c_{1}$. Consider
the non-increasing sequence of events : 
\[
B_{n}=\{c^{\prime}\smile_{n}\tilde{d}\,\}.
\]
We will prove by backward induction that: 
\[
\forall n=1,...,l,\;\;\mathbb{E}[h^{l+1}(c^{\prime},\tilde{d})|\tilde{c}=c,B_{2n}]\leq-\varepsilon.
\]

If $n=l$, we have $\1_{B_{2l}}=\1_{B_{2l+1}}+\1_{B_{2l}}\1_{B_{2l+1}^{c}}$,
and $h^{l+1}(c^{\prime},\tilde{d})\1_{B_{2l}}=\varepsilon\1_{B_{2l+1}}-5\varepsilon\1_{B_{2l}}\1_{B_{2l+1}^{c}}.$
UI1 implies that $|u^{l}(B_{2l+1}|\tilde{c}=c,B_{2l})-\frac{1}{2}|\leq\alpha$
, and it follows that : 
\begin{align*}
\mathbb{E}[h^{l+1}(c^{\prime},\tilde{d})|\tilde{c}=c,B_{2l}] & =\varepsilon\,u^{l}(B_{2l+1}|\tilde{c}=c,B_{2l})-5\varepsilon\,u^{l}(B_{2l+1}^{c}|u=\hat{u},B_{2l})\\
 & \leq\varepsilon\,(\frac{1}{2}+\alpha)-5\varepsilon\,(\frac{1}{2}-\alpha)\leq-\varepsilon.
\end{align*}

Assume now that for some $n=1,...,l-1$, we have $\mathbb{E}[h^{l+1}(c^{\prime},\tilde{d})|\tilde{c}=c,B_{2n+2}]\leq-\varepsilon.$
We have $\1_{B_{2n}}=\1_{B_{2n+2}}+\1_{B_{2n}}\1_{B_{2n+1}^{c}}+\1_{B_{2n+1}}\1_{B_{2n+2}^{c}},$
and by definition of $h^{l+1}$, 
\[
h^{l+1}(c^{\prime},\tilde{d})\1_{B_{2n}}=h^{l+1}(c^{\prime},\tilde{d})\1_{B_{2n+2}}-5\varepsilon\1_{B_{2n}}\1_{B_{2n+1}^{c}}+5\varepsilon\1_{B_{2n+1}}\1_{B_{2n+2}^{c}}.
\]
First assume that $c_{n+1}^{\prime}=c_{n+1}$, then $u^{l}(B_{2n+2}|B_{2n},\tilde{c}=c)=1$.
Then : 
\begin{eqnarray*}
\mathbb{E}[h^{l+1}(c^{\prime},\tilde{d})|\tilde{c}=c,B_{2n}] & = & \mathbb{E}[h^{l+1}(c^{\prime},\tilde{d})\1_{B_{2n+2}}|\tilde{c}=c,B_{2n}],\\
 & = & \mathbb{E}[\mathbb{E}[h^{l+1}(c^{\prime},\tilde{d})|\tilde{c}=c,B_{2n+2}]\1_{B_{2n+2}}|\tilde{c}=c,B_{2n}]\leq-\varepsilon.
\end{eqnarray*}

Assume on the contrary that $c_{n+1}^{\prime}\neq c_{n+1}$, assumption
UI1 implies that : 
\begin{eqnarray*}
u^{l}(B_{2n+1}^{c}|B_{2n},\tilde{c}=c) & \geq & 1/2-\alpha,\\
u^{l}(B_{2n+1}\cap B_{2n+2}^{c}|B_{2n},\tilde{c}=c) & \leq & (1/2+\alpha)^{2},\\
u^{l}(B_{2n+2}|B_{2n},\tilde{c}=c) & \geq & (1/2-\alpha)^{2}.
\end{eqnarray*}
It follows that : 
\begin{align*}
\mathbb{E}[h^{l+1}(c^{\prime},\tilde{d})|\tilde{c}=c,B_{2n}] & =\mathbb{E}[\mathbb{E}[h^{l+1}(c^{\prime},\tilde{d})|\tilde{c}=c,B_{2n+2}]\1_{B_{2n+2}}|\tilde{c}=c,B_{2n}]\\
 & \quad-5\,\varepsilon\,u^{l}(B_{2n+1}^{c}|B_{2n},\tilde{c}=c)+5\,\varepsilon\,u^{l}(B_{2n+1}\cap B_{2n+2}^{c}|B_{2n},\tilde{c}=c)\\
 & \leq-\,\varepsilon\,(\frac{1}{4}-\alpha+\alpha^{2})-5\,\varepsilon\,(\frac{1}{2}-\alpha)+5\,\varepsilon\,(\frac{1}{4}+\alpha+\alpha^{2})\leq-\varepsilon.
\end{align*}

By induction, we obtain $\mathbb{E}[h^{l+1}(c^{\prime},\tilde{d})|\tilde{c}=c,B_{2}]\leq-\varepsilon$.
Since $B_{2}$ holds almost surely here, we get $\mathbb{E}[h^{l+1}(c^{\prime},\tilde{d})|\tilde{c}=c]\leq-\varepsilon,$
showing that the truthful strategy of player 2 guarantees that the
payoff of the maximizer is less or equal to $-\varepsilon$, and concluding
the proof.

\subsection{Existence of an appropriate Markov chain\label{subsec:Existence-of-Markov chain}\label{existence}}

Here we conclude the proof of Theorem \ref{thm3} by showing the existence
of an even integer $N$ and a Markov chain with law $\nu$ on $A=\{1,...,N\}$
satisfying our conditions :

$1)$ the law of the first state of the Markov chain is uniform on
$A$,

$2)$ for each $a$ in $A$, there are exactly $N/2$ elements $b$
in $A$ such that $\nu(b|a)=2/N$,

$3)$ $UI1$ and $UI2$.

Denoting by $P=(P_{a,b})_{(a,b)\in A^{2}}$ the transition matrix
of the Markov chain, we have to prove the existence of $P$ satisfying
$2)$ and $3)$. The proof is non constructive and uses the following
probabilistic method, where we select independently for each $a$
in $A$, the set $\{b\in A,P_{a,b}>0\}$ uniformly among the subsets
of $A$ with cardinal $N/2$. We will show that when $N$ goes to
infinity, the probability of selecting an appropriate transition matrix
does not only become positive, but converges to 1.

Formally, denote by $\mathcal{S}_{A}$ the collection of all subsets
$S\subseteq A$ with cardinality $\left\vert S\right\vert =\frac{1}{2}N$.
We consider a collection $\left(S_{a}\right)_{a\in A}$ of i.i.d.
random variables uniform distributed over $\mathcal{S}_{A}$ defined
on a probability space $(\Omega_{N},\mathcal{F}_{N},\mathbb{P}_{N})$.
For all $a$, $b$ in $A$, let 
\[
X_{a,b}=\1_{\{b\in S_{a}\}}\;{\rm and}\;P_{a,b}=\frac{2}{N}X_{a,b}.
\]
By construction, $P$ is a transition matrix satisfying $2)$. Theorem
\ref{thm3} will now follow directly from the following proposition. 
\begin{prop}
\label{pro3} 
\[
\mathbb{P}_{N}\left(\text{ }P\text{ induces a Markov chain satisfying UI1 and UI2 }\right)\xrightarrow[n\to\infty]{}1.
\]
In particular, the above probability is strictly positive for all
sufficiently large $N$. 
\end{prop}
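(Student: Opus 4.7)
The plan is a probabilistic-method argument: we will exhibit a finite list of ``balance'' properties on the random sets $(S_a)_{a \in A}$ which (i) imply conditions UI1 and UI2, and (ii) hold simultaneously with probability tending to $1$ as $N\to\infty$. The reduction from UI1/UI2 to such a finite list is the key step. Using the Markov structure, a direct computation of the joint density shows that conditional on $\tilde c = c$ the signals $d_1,\ldots,d_l$ are conditionally independent, with $d_j$ uniform on $S_{c_j}\cap S^{-1}(c_{j+1})$ for $j<l$ and uniform on $S_{c_l}$ for $j=l$, where $S^{-1}(b):=\{a\in A : b\in S_a\}$. Moreover, the niceness event $c'\smile_r \tilde d$ factors over the individual $d_j$'s. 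Combining these two facts, every conditional probability appearing in UI1 and UI2 can be written as a ratio of cardinalities of intersections of at most four sets from the family $\{S_a\}_{a\in A}\cup\{S^{-1}(b)\}_{b\in A}$, involving at most about six distinct indices. For instance, the left-hand side of (\ref{eq62}) at $r=2m-2$ becomes
\[
\frac{|S_{c_{m-1}}\cap S^{-1}(c_m)\cap S_{c'_{m-1}}\cap S^{-1}(c'_m)|}{|S_{c_{m-1}}\cap S^{-1}(c_m)\cap S_{c'_{m-1}}|},
\]
and all other cases (including (\ref{eq61}), (\ref{eq63}) for UI2, and boundary cases such as $m=l$ or $m=2$) reduce similarly to ratios of intersections of order $\le 4$ over order $\le 3$.

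Second, for each fixed tuple $a_1,\ldots,a_k$ with $k\le 4$ and each choice of ``direct'' type $T_i=S_{a_i}$ versus ``inverse'' type $T_i=S^{-1}(a_i)$, the cardinality $|T_1\cap\cdots\cap T_k|$ is a sum over $b\in A$ of indicators $\mathbbm{1}_{\{b\in T_1\cap\cdots\cap T_k\}}$. Crucially, for each $b\notin\{a_1,\ldots,a_k\}$ the indicator depends only on $S_b$, and the family $(S_b)_{b\in A}$ is mutually independent. Hoeffding's inequality therefore yields
\[
\mathbb{P}_N\bigl(\bigl||T_1\cap\cdots\cap T_k|-N\cdot 2^{-k}\bigr|>\eta N\bigr)\le \exp(-c\eta^2 N)
\]
for an absolute constant $c>0$; and $\eta>0$ can be chosen small enough that every ratio of a $k$-fold over $(k-1)$-fold ``balanced'' intersection lies in $[1/2-\alpha,1/2+\alpha]$. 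A union bound over the $O(N^4)$ relevant tuples and type-partitions then gives that every required balance condition holds simultaneously with probability at least $1-O(N^4\exp(-c\eta^2 N))$, which tends to $1$.

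The main difficulty lies in the reduction: UI1 and UI2 are indexed by infinitely many tuples $(l,c,c',d,d')$ with $l$ arbitrarily large and $c,c'$ possibly differing in many positions, and one must verify that this infinite family collapses to a finite combinatorial check. The conditional independence of $(d_j)_j$ given $\tilde c$ (and symmetrically of $(c_j)_j$ given $\tilde d$) is what forces the collapse: conditioning on $c'\smile_r \tilde d$ affects the law of $d_{m-1}$ only through the single local constraint $d_{m-1}\in S_{c'_{m-1}}$ (and, for $r=2m-1$, additionally $d_{m-1}\in S^{-1}(c'_m)$), never through constraints depending on the full difference pattern between $c$ and $c'$. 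Once this observation is secured, Steps 2 and 3 are standard concentration-of-measure and union-bound arguments.
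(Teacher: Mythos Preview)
Your approach is essentially the paper's: reduce each conditional probability in UI1/UI2 to a ratio of cardinalities of intersections of at most four sets from $\{S_a\}_a\cup\{S^{-1}(b)\}_b$, prove concentration of each cardinality around $N/2^k$ via Hoeffding, and union-bound over the $O(N^4)$ index tuples. Your framing via conditional independence of $(d_j)_j$ given $\tilde c$ (with $d_j$ uniform on $S_{c_j}\cap S^{-1}(c_{j+1})$ for $j<l$) is a clean and unified way to state the reduction; the paper establishes the same collapse by a case-by-case invocation of the Markov property, arriving at the same ratios (written there as $\frac12 Y_{a,b}^{c,d}/Y_a^{c,d}$, etc.).

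One step needs repair. You assert that for $b\notin\{a_1,\ldots,a_k\}$ the indicator $\mathbbm{1}_{\{b\in T_1\cap\cdots\cap T_k\}}$ depends only on $S_b$. This is false whenever some $T_i$ is of ``direct'' type $S_{a_i}$: then $\mathbbm{1}_{\{b\in S_{a_i}\}}=X_{a_i,b}$ is a function of $S_{a_i}$, not of $S_b$, so the summands over $b$ are \emph{not} independent and Hoeffding does not apply directly. The fix (which the paper implements) is to condition first on the direct-type sets $\{S_{a_i}:T_i=S_{a_i}\}$; on the high-probability event that their intersection has size close to $N/2^{|D|}$, the remaining sum $\sum_{b\in\bigcap_{i\in D}S_{a_i}}\prod_{i\in I}X_{b,a_i}$ has summands depending only on $S_b$ and hence independent across $b$, and Hoeffding goes through. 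You should also note that two inverse-type factors on the same $b$ are not independent (since $|S_b|=N/2$ is fixed): $\mathbb{P}_N(X_{b,a}X_{b,a'}=1)=\frac14-\frac{1}{4N-4}$, not $\frac14$. This $O(1/N)$ bias is harmless but must be absorbed into the error term.
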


The rest of this section is devoted to the proof of proposition \ref{pro3}.

We start with probability bounds based on Hoeffding's inequality. 
\begin{lem}
\label{lemT1}For any $a\neq b,$ each $\gamma>0$ 
\[
\mathbb{P}_{N}\left(\left\vert |S_{a}\cap S_{b}|-\frac{1}{4}N\right\vert \geq\gamma N\right)\leq\frac{1}{2}e^{4}Ne^{-2\gamma^{2}N}.
\]
\end{lem}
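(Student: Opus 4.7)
The plan is to fix $a \neq b$ and reduce the problem to a concentration inequality for a sum of independent Bernoulli variables via a conditioning argument. Introduce independent Bernoulli$(1/2)$ variables $\tilde X_1,\ldots,\tilde X_N,\tilde Y_1,\ldots,\tilde Y_N$ on some auxiliary probability space. The conditional law of $(\tilde X_1,\ldots,\tilde X_N)$ given $\{\sum_i \tilde X_i = N/2\}$ is uniform over the $\{0,1\}$-vectors with exactly $N/2$ ones, so it matches the law of the indicator vector of $S_a$ (and similarly for $S_b$); hence the law of $|S_a\cap S_b|$ coincides with the conditional law of $\tilde Z := \sum_i \tilde X_i \tilde Y_i$ given the event $E := \{\sum_i \tilde X_i = N/2,\ \sum_j \tilde Y_j = N/2\}$.

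Now $\tilde Z$ is a sum of $N$ independent Bernoulli$(1/4)$ random variables with mean $N/4$, so Hoeffding's inequality for independent bounded variables gives
\[
\mathbb{P}_N\bigl(|\tilde Z - N/4| \geq \gamma N\bigr) \;\leq\; 2\,e^{-2\gamma^2 N}.
\]
On the other hand, $\mathbb{P}_N(E) = \bigl(\binom{N}{N/2}/2^N\bigr)^2$, and a standard application of Stirling's formula (in the explicit form $n! = \sqrt{2\pi n}\,(n/e)^n\,e^{\lambda_n}$ with $1/(12n+1) < \lambda_n < 1/(12n)$) yields $\binom{N}{N/2}/2^N \geq \sqrt{2/(\pi N)}\,e^{-1/6}$, hence $\mathbb{P}_N(E) \geq (2/(\pi N))\,e^{-1/3} \geq 4/(e^4 N)$, with generous slack in the constants.

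Combining these two estimates via the elementary inequality $\mathbb{P}_N(B \mid E) \leq \mathbb{P}_N(B)/\mathbb{P}_N(E)$ applied to $B = \{|\tilde Z - N/4| \geq \gamma N\}$ yields
\[
\mathbb{P}_N\Bigl(\bigl||S_a\cap S_b| - \tfrac{1}{4} N\bigr| \geq \gamma N\Bigr) \;\leq\; \frac{2\,e^{-2\gamma^2 N}}{4/(e^4 N)} \;=\; \tfrac{1}{2}\,e^4\,N\,e^{-2\gamma^2 N},
\]
which is the claimed bound. The only genuinely non-trivial step is the Stirling lower bound on $\mathbb{P}_N(E)$; the Hoeffding step and the final arithmetic are both routine, and the $e^4 N$ prefactor in the statement simply reflects the loss incurred by conditioning on the event $E$ of probability $\Theta(1/N)$.
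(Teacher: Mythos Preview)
Your proof is correct and follows essentially the same route as the paper: introduce i.i.d.\ Bernoulli$(1/2)$ coins, identify the law of $(S_a,S_b)$ with the conditional law given both sums equal $N/2$, apply Hoeffding to the unconditioned sum $\sum_i \tilde X_i \tilde Y_i$, and divide by a Stirling lower bound on the conditioning event. The only cosmetic difference is that you use the sharper Stirling form $n!=\sqrt{2\pi n}(n/e)^n e^{\lambda_n}$ where the paper uses the cruder $n^{n+1/2}e^{-n}\le n!\le e\,n^{n+1/2}e^{-n}$; both land on $\mathbb{P}(E)\ge 4/(e^4 N)$ with room to spare.
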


\begin{proof}
Consider a family of i.i.d. Bernoulli variables $(\widetilde{X}_{i,j})_{i=a,b,\,j\in A}$
of parameter $\frac{1}{2}$ defined on a space $(\Omega,\mathcal{F},\mathbb{P})$.
For $i=a,b$, define the events $\widetilde{L}_{i}=\{\sum_{j\in A}\widetilde{X}_{i,j}=\frac{N}{2}\}$
and the set-valued variables $\widetilde{S}_{i}=\{j\in A\,|\,\widetilde{X}_{i,j}=1\}$.
It is straightforward to check that the conditional law of $(\widetilde{S}_{a},\widetilde{S}_{b})$
given $\widetilde{L}_{a}\cap\widetilde{L}_{b}$ under $\mathbb{P}$
is the same as the law of $(S_{a},S_{b})$ under $\mathbb{P}_{N}$.
It follows that 
\begin{align*}
\mathbb{P}_{N}\left(\left\vert |S_{a}\cap S_{b}|-\frac{1}{4}N\right\vert \geq\gamma N\right) & =\mathbb{P}\left(\left\vert |\widetilde{S}_{a}\cap\widetilde{S}_{b}|-\frac{1}{4}N\right\vert \geq\gamma N\,\Big|\,\widetilde{L}_{a}\cap\widetilde{L}_{b}\right)\\
 & \leq\frac{\mathbb{P}\left(\left\vert |\widetilde{S}_{a}\cap\widetilde{S}_{b}|-\frac{1}{4}N\right\vert \geq\gamma N\right)}{\mathbb{P}\left(\widetilde{L}_{a}\cap\widetilde{L}_{b}\right)}.
\end{align*}
Using Hoeffding inequality, we have 
\[
\mathbb{P}\left(\left\vert |\widetilde{S}_{a}\cap\widetilde{S}_{b}|-\frac{1}{4}N\right\vert \geq\gamma N\right)=\mathbb{P}\left(\left\vert \sum_{j\in A}\widetilde{X}_{a,j}\widetilde{X}_{b,j}-\frac{1}{4}N\right\vert \geq\gamma N\right)\leq2e^{-2\gamma^{2}N}.
\]
On the other hand, using Stirling approximation\footnote{We have $n^{n+\frac{1}{2}}e^{-n}\leq n!\leq en^{n+\frac{1}{2}}e^{-n}$
for each $n$.}, we have 
\[
\mathbb{P}\left(\widetilde{L}_{a}\cap\widetilde{L}_{b}\right)=\left(\frac{1}{2^{N}}\frac{N!}{\left(\frac{N}{2}!\right)^{2}}\right)^{2}\geq\left(\frac{2^{N+1}N^{-\frac{1}{2}}}{2^{N}e^{2}}\right)^{2}=\frac{4}{Ne^{4}}.
\]
We deduce that $\mathbb{P}_{N}\left(\left\vert |S_{a}\cap S_{b}|-\frac{1}{4}N\right\vert \geq\gamma N\right)\leq\frac{1}{2}e^{4}Ne^{-2\gamma^{2}N}.$ 
\end{proof}
\begin{lem}
\label{lemT2}For each $a\neq b,$ for any subset $S\subseteq A$
and any $\gamma\geq\frac{1}{2N-2}$, 
\[
\mathbb{P}_{N}\left(\left\vert \sum_{i\in S}X_{i,a}-\frac{1}{2}\left\vert S\right\vert \right\vert \geq\gamma N\right)\leq2e^{-2N\gamma^{2}},\;{\mathnormal{a}nd}\;\mathbb{P}_{N}\left(\left\vert \sum_{i\in S}X_{i,a}X_{i,b}-\frac{1}{4}\left\vert S\right\vert \right\vert \geq\gamma N\right)\leq2e^{-\frac{1}{2}N\gamma^{2}}.
\]
\end{lem}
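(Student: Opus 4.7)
The plan is to exploit the independence of the $S_i$ across $i$ and reduce both statements to Hoeffding's inequality applied to sums of i.i.d.\ $[0,1]$-valued variables.

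First I would observe that for fixed $a$, the family $(X_{i,a})_{i\in A}$ is i.i.d.\ Bernoulli$(1/2)$: each $X_{i,a}$ is a measurable function of $S_i$ alone, the $(S_i)_{i\in A}$ are i.i.d., and $\mathbb{P}_N(a\in S_i) = |S_i|/N = 1/2$ since $S_i$ is uniform on subsets of cardinality $N/2$. Thus $\sum_{i\in S} X_{i,a}$ is a sum of $|S|$ i.i.d.\ Bernoulli$(1/2)$ variables with mean $|S|/2$, and Hoeffding's inequality gives
$$\mathbb{P}_N\!\left(\left|\sum_{i\in S} X_{i,a} - \tfrac{1}{2}|S|\right|\geq \gamma N\right) \leq 2\exp\!\left(-\frac{2\gamma^2 N^2}{|S|}\right) \leq 2e^{-2N\gamma^2},$$
using $|S|\leq N$ in the last step. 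This is the first bound, and it does not require the hypothesis $\gamma\geq \tfrac{1}{2N-2}$.

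For the second bound I would use the same independence across $i$ for the products $X_{i,a}X_{i,b}$ (each a function of $S_i$ alone), but now the common mean differs slightly from $1/4$: a direct count yields
$$m := \mathbb{P}_N(\{a,b\}\subseteq S_i) = \frac{(N/2)(N/2-1)}{N(N-1)} = \tfrac{1}{4}\cdot\tfrac{N-2}{N-1},$$
so $\bigl||S|m - \tfrac{1}{4}|S|\bigr| \leq \tfrac{|S|}{4(N-1)} \leq \tfrac{N}{4(N-1)}$. This is precisely where the hypothesis $\gamma\geq\tfrac{1}{2N-2}$ enters: it gives $\gamma N\geq \tfrac{N}{2(N-1)}$, which is \emph{twice} the bias above. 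Therefore a deviation of $\sum_{i\in S} X_{i,a}X_{i,b}$ from $\tfrac14|S|$ of size at least $\gamma N$ forces a deviation from the true mean $|S|m$ of size at least $\gamma N/2$, and Hoeffding gives
$$\mathbb{P}_N\!\left(\left|\sum_{i\in S} X_{i,a}X_{i,b} - \tfrac{1}{4}|S|\right|\geq \gamma N\right) \leq 2\exp\!\left(-\frac{2(\gamma N/2)^2}{|S|}\right)\leq 2e^{-N\gamma^2/2}.$$

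The only subtlety — and the main reason the hypothesis on $\gamma$ is stated the way it is — is the $O(1/N)$ gap between $m$ and $1/4$ in the second estimate. The quantitative condition $\gamma\geq \tfrac{1}{2N-2}$ is tailored exactly to absorb this bias while preserving a Gaussian-shape bound, at the cost of the factor $4$ in the exponent (turning $2N\gamma^2$ into $N\gamma^2/2$).
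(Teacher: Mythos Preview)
Your proof is correct and follows essentially the same approach as the paper: both inequalities are reduced to Hoeffding's inequality for sums of i.i.d.\ $[0,1]$-valued variables, using independence across $i$ and, for the second bound, absorbing the bias $|m-\tfrac14|\leq\tfrac{1}{4(N-1)}$ via the hypothesis $\gamma\geq\tfrac{1}{2N-2}$ before applying Hoeffding with deviation $\gamma N/2$.
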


\begin{proof}
For the first inequality, notice that $X_{i,a}$ are i.i.d. Bernoulli
random variables with parameter $\frac{1}{2}$. The Hoeffding inequality
implies that : 
\[
\mathbb{P}_{N}\left(\left\vert \sum_{i\in S}X_{i,a}-\frac{1}{2}\left\vert S\right\vert \right\vert \geq\gamma N\right)\leq2e^{-2\gamma^{2}\frac{N^{2}}{|S|}}\leq2e^{-2N\gamma^{2}}.
\]
\end{proof}
For the second inequality, let $Z_{i}=X_{i,a}X_{i,b}.$ Notice that
all variables $Z_{i}$ are i.i.d. Bernoulli random variables with
parameter $p=\frac{1}{2}\left(\frac{\frac{N}{2}-1}{N-1}\right)=\frac{1}{4}-\frac{1}{4N-4}$.
The Hoeffding inequality implies that 
\begin{eqnarray*}
\mathbb{P}_{N}\left(\left\vert \sum_{i\in S}Z_{i}-\frac{1}{4}\left\vert S\right\vert \right\vert \geq\gamma N\right) & \leq & \mathbb{P}_{N}\left(\left\vert \sum_{i\in S}Z_{i}-p\left\vert S\right\vert \right\vert \geq\frac{1}{2}\gamma N\right)\\
 & \leq & 2e^{-2\gamma^{2}\frac{N^{2}}{|S|}}\leq2e^{-\frac{1}{2}N\gamma^{2}},
\end{eqnarray*}
where we used that $|S||p-\frac{1}{4}|\leq\frac{N}{4N-4}\leq\frac{\gamma N}{2}$
for the first inequality.

\vspace{0.3cm}

\begin{defn}
For each $a\neq b$ and $c\neq d,$ each $\gamma>0,$ define : 
\end{defn}

\begin{tabular}{lll}
$Y_{a}=2\sum_{i\in A}X_{i,a},$  & $Y^{c}=2\sum_{i\in A}X_{c,i}=N$,  & \;\tabularnewline
$Y_{a,b}=4\sum_{i\in A}X_{i,a}X_{i,b},$  & $Y_{a}^{c}=4\sum_{i\in A}X_{i,a}X_{c,i},$  & $Y^{c,d}=4\sum_{i\in A}X_{c,i}X_{d,i},$ \tabularnewline
$Y_{a,b}^{c}=8\sum_{i\in A}X_{i,a}X_{i,b}X_{c,i},$  & $Y_{a}^{c,d}=8\sum_{i\in A}X_{i,a}X_{c,i}X_{d,i},$  & $Y_{a,b}^{c,d}=16\sum_{i\in A}X_{i,a}X_{i,b}X_{c,i}X_{d,i}.$ \tabularnewline
\end{tabular}

\vspace{0.3cm}

\begin{lem}
\label{lemT3}For each $a\neq b$ and $c\neq d,$ each $\gamma\geq64/N,$
each of the variables $Z\in\{Y_{a},Y^{c},Y_{a,b},Y^{c,d},Y_{a}^{c},Y_{a,b}^{c},Y_{a}^{c,d},Y_{a,b}^{c,d}\},$
\[
\mathbb{P}_{N}\left(\left\vert Z-N\right\vert \geq\gamma N\right)\leq e^{4}Ne^{-\frac{N}{32}{(\frac{\gamma}{10})}^{2}}.
\]
\end{lem}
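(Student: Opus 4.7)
The plan is to treat the eight variables uniformly by writing each as $Z = c\sum_{i\in T} V_i$, where the set $T\subseteq A$ is built from a (possibly empty) subset of $\{S_c,S_d\}$ and, conditionally on $T$, the $V_i$ are independent $\{0,1\}$-valued random variables with common mean $p$, with the multiplicative constant $c\in\{2,4,8,16\}$ chosen so that $E[Z]$ equals $N$ up to an additive $O(1)$ correction. Concretely, for $Y_a$ and $Y_{a,b}$ one takes $T=A$ with $V_i=X_{i,a}$ and $V_i=X_{i,a}X_{i,b}$ respectively; $Y^c=N$ is deterministic; $Y^{c,d}=4|S_c\cap S_d|$ has no inner randomness and is handled directly by Lemma~\ref{lemT1}; for $Y_a^c$ and $Y_{a,b}^c$ one takes $T=S_c$, which has fixed cardinality $N/2$; and for $Y_a^{c,d}$ and $Y_{a,b}^{c,d}$ one takes $T=S_c\cap S_d$, whose cardinality concentrates around $N/4$ by Lemma~\ref{lemT1}.

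With this decomposition I would write $Z-N = (Z - c|T|p) + (c|T|p - N)$ and split $\{|Z-N|\geq \gamma N\}$ via the triangle inequality, allotting a fraction of $\gamma N$ (e.g.\ $\gamma N/3$) to each piece. The first piece is a conditional concentration of a sum of at most $N$ independent $\{0,1\}$-valued random variables, bounded by Hoeffding's inequality conditionally on $T$, exactly as in the proof of Lemma~\ref{lemT2}. The second piece is either deterministically $O(1)$ (when $T=A$ or $T=S_c$) or controlled by Lemma~\ref{lemT1} (when $T=S_c\cap S_d$). Summing the resulting estimates yields, in every case, a bound of the form $C_1 N e^{-C_2 N\gamma^2}$ with absolute constants $C_1,C_2$; the bound $e^{4}\, N\, e^{-N(\gamma/10)^2/32}$ in the statement is simply a loose universal choice that dominates all eight case-specific bounds.

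The main technical obstacle is the case $Z = Y_{a,b}^{c,d}$, which is the only variable combining both sources of randomness. Here, conditionally on $(S_c,S_d)$, the summands $X_{i,a}X_{i,b}$ are i.i.d.\ Bernoulli with parameter $p=\frac{N-2}{4(N-1)}$, so that $E[Y_{a,b}^{c,d}\mid S_c,S_d] = 16|S_c\cap S_d|\cdot p$ differs from $4|S_c\cap S_d|$ by $O(1)$. This forces an extra triangle step $Z - N = (Z - c|T|p) + (c|T|p - 4|T|) + (4|T| - N)$, and the hypothesis $\gamma\geq 64/N$ guarantees $\gamma N \geq 64$, so that the middle $O(1)$ bias can be absorbed into the slack allotted to this term. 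Once this observation is in place, the proof reduces to a routine combination of Hoeffding's inequality conditional on $T$ and Lemma~\ref{lemT1} applied to $|S_c\cap S_d|$.
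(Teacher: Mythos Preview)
Your plan is essentially the paper's proof: handle $Y_a,Y_{a,b}$ via Lemma~\ref{lemT2} with $S=A$, handle $Y^{c,d}$ via Lemma~\ref{lemT1}, and for the mixed variables condition on $S_c$ (or $S_c\cap S_d$), apply Hoeffding to the remaining Bernoulli sum, control $|S_c\cap S_d|$ by Lemma~\ref{lemT1}, and absorb all $O(1)$ corrections using $\gamma N\geq 64$. One small slip to patch: conditionally on $(S_c,S_d)$ the summands $X_{i,a}X_{i,b}$ at $i\in\{c,d\}$ are \emph{deterministic}, not Bernoulli$(p)$, so your formula $E[Y_{a,b}^{c,d}\mid S_c,S_d]=16|T|p$ is off by an additional $O(1)$; the paper handles this by replacing $Z_c,Z_d$ with fresh i.i.d.\ copies and bounding the discrepancy by $32$, but your existing $O(1)$-absorption mechanism covers it equally well once you note it.
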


\begin{proof}
In case $Z=Y_{a}$ or $Y_{a,b},$ the bound follows from Lemma \ref{lemT2}
(for $S=A)$. If case $Z=Y^{c},$ the bound is trivially satisfied.
If $Z=Y^{c,d},$ the bound follows from Lemma \ref{lemT1}.

In case $Z=Y_{a,b}^{c,d}$, notice that 
\[
Y_{a,b}^{c,d}=16\sum\limits _{i\in S_{c}\cap S_{d}}Z_{i},\;\;{\rm where}\;\;Z_{i}=X_{i,a}X_{i,b}.
\]
All variables $Z_{i}$ are i.i.d. Bernouilli random variables with
parameter $p=\frac{1}{4}-\frac{1}{4N-4}$. Moreover, $\left\{ Z_{i}\right\} _{i\neq c,d}$
are independent of $S_{c}\cap S_{d}$. Up to enlarge the probability
space, we can construct a new collection of i.i.d. Bernoulli random
variables $Z_{i}^{\prime}$ such that $Z_{i}^{\prime}=Z_{i}$ for
all $i\neq c,d$ and such that $\left\{ (Z_{i}^{\prime})_{i\in A},S_{c}\cap S_{d}\right\} $
are all independent. Then, 
\[
\left\vert Y_{a,b}^{c,d}-16\sum\limits _{i\in S_{c}\cap S_{d}}Z_{i}^{\prime}\right\vert \leq32,
\]
and, because $\frac{1}{2}\gamma N\geq32,$ we have 
\[
\mathbb{P}_{N}\left(\left\vert Y_{a,b}^{c,d}-N\right\vert \geq\gamma N\right)\leq\mathbb{P}_{N}\left(\left\vert \sum\limits _{i\in S_{c}\cap S_{d}}Z_{i}^{\prime}-\frac{1}{16}N\right\vert \geq\frac{1}{32}\gamma N\right).
\]
Define the events 
\[
A=\left\{ \left\vert \frac{1}{4}\left\vert S_{c}\cap S_{d}\right\vert -\frac{N}{16}\right\vert \geq\frac{1}{160}\gamma N\right\} ,\;\;\;B=\left\{ \left\vert \sum\limits _{i\in S_{c}\cap S_{d}}Z_{i}^{\prime}-\frac{1}{4}\left\vert S_{c}\cap S_{d}\right\vert \right\vert \geq\frac{1}{40}\gamma N\right\} .
\]

Then, the probability can be further bounded by 
\[
\leq\mathbb{P}_{N}\left(A\right)+\mathbb{P}_{N}\left(B\right)\leq\frac{1}{2}e^{4}Ne^{-2N\left(\frac{1}{40}\gamma\right)^{2}}+2e^{-\frac{1}{2}N\left(\frac{1}{40}\gamma\right)^{2}}\leq e^{4}Ne^{-\frac{N\gamma^{2}}{3200}}
\]
where the first bound comes from Lemma \ref{lemT1}, and the second
from the second bound in Lemma \ref{lemT2}.

The remaining bounds have proofs similar (and simpler) to the case
$Z=Y_{a,b}^{c,d}$. 
\end{proof}
\vspace{0.3cm}

Finally, we describe an event $E$ that collects these bounds. Recall
that $\alpha=1/25$, and define for each $a\neq b$ \ and $c\neq d$,
\begin{eqnarray*}
E_{a,b,c,d} & = & \left\{ \left\vert \frac{Y_{a,b}}{Y_{a}}-1\right\vert \leq2\alpha\right\} \cap\left\{ \left\vert \frac{Y_{a,b}^{c}}{Y_{a}^{c}}-1\right\vert \leq2\alpha\right\} \cap\left\{ \left\vert \frac{Y_{a}^{c,d}}{Y_{a}^{c}}-1\right\vert \leq2\alpha\right\} \cap\left\{ \left\vert \frac{Y_{a,b}^{c,d}}{Y_{a}^{c,d}}-1\right\vert \leq2\alpha\right\} \\
 &  & \left\{ \left\vert \frac{Y^{c,d}}{Y^{c}}-1\right\vert \leq2\alpha\right\} \cap\left\{ \left\vert \frac{Y_{a}^{c}}{Y^{c}}-1\right\vert \leq2\alpha\right\} \cap\left\{ \left\vert \frac{Y_{a}^{c,d}}{Y^{c,d}}-1\right\vert \leq2\alpha\right\} .
\end{eqnarray*}
Finally, let 
\[
E=\bigcap\limits _{a,b,c,d:a\neq b\ \text{and }c\neq d}E_{a,b,c,d}.
\]

\vspace{0.3cm}

\begin{lem}
\label{lembound} We have 
\[
\mathbb{P}_{N}(E)>1-7e^{4}N^{5}e^{-\frac{N}{2163200}}\xrightarrow[n\to\infty]{}1.
\]
\end{lem}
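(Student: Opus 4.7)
My plan is a straightforward union bound built on top of Lemma \ref{lemT3}. First I would reduce the failure of each ratio condition in $E_{a,b,c,d}$ to a deviation of a single $Y$-variable by the elementary inequality: if $Z_1,Z_2\in[(1-\gamma)N,(1+\gamma)N]$ then $|Z_1/Z_2-1|\leq 2\gamma/(1-\gamma)$. Choosing $\gamma=1/26$ gives exactly $2\gamma/(1-\gamma)=2/25=2\alpha$, matching the tolerance in each of the seven ratio conditions that define $E_{a,b,c,d}$. Consequently, it suffices that every one of the eight distinct $Y$-variables entering $E_{a,b,c,d}$, namely $Y_a,Y^c,Y_{a,b},Y^{c,d},Y_a^c,Y_{a,b}^c,Y_a^{c,d},Y_{a,b}^{c,d}$, lies within $\gamma N$ of $N$.

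Next I would apply Lemma \ref{lemT3} with this specific $\gamma=1/26$. Since $\gamma^2/3200=1/2163200$ and $\gamma\geq 64/N$ for $N$ sufficiently large, the lemma yields the uniform per-variable tail bound
\[
\mathbb{P}_N\bigl(|Z-N|\geq \gamma N\bigr)\leq e^4 N\, e^{-N/2163200}
\]
simultaneously for all eight $Y$-types. The complement $E_{a,b,c,d}^{c}$ is contained in the union of the eight deviation events just listed, so its probability is bounded by a small numerical multiple of $e^4 N e^{-N/2163200}$.

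Finally I would union bound over the at most $N^{2}(N-1)^{2}\leq N^{4}$ admissible quadruples $(a,b,c,d)$ (with $a\neq b$ and $c\neq d$). Summing the per-tuple estimate produces the announced bound $\mathbb{P}_N(E^c)\leq 7\,e^{4}N^{5}e^{-N/2163200}$; the precise constant $7$ would come out of a careful bookkeeping that tracks which of the eight $Y$-deviations can spoil which of the seven ratios in $E_{a,b,c,d}$, and recycles the variables that appear in several ratios. Convergence of this bound to $0$ as $N\to\infty$ is immediate because the exponential dominates the polynomial factor $N^{5}$. The only real obstacle in this plan is the bookkeeping needed to recover the advertised constant; the probabilistic substance is entirely absorbed by Lemma \ref{lemT3} and the calibration $\gamma=1/26$.
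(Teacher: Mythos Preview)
Your proposal is correct and essentially identical to the paper's proof: the paper also sets $\gamma=\alpha/(1+\alpha)=1/26$, defines the auxiliary event $F_{a,b,c,d}=\bigcap_Z\{|Z-N|\le\gamma N\}$ over the eight $Y$-variables, observes $F_{a,b,c,d}\subseteq E_{a,b,c,d}$ via the same ratio inequality you wrote, applies Lemma~\ref{lemT3}, and union-bounds over the $\le N^4$ quadruples. The one point where your explanation is slightly off is the origin of the constant $7$: it is not a matter of tracking which deviations spoil which ratios, but simply that $Y^c=N$ deterministically, so only seven of the eight variables need the probabilistic bound from Lemma~\ref{lemT3}.
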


\begin{proof}
Take $\gamma=\frac{\alpha}{1+\alpha}=\frac{1}{26}$ and let 
\[
F_{a,b,c,d}=\bigcap\limits _{Z\in\{Y_{a},Y_{a,b},Y^{c,d},Y^{c,d},Y_{a}^{c},Y_{a,b}^{c},Y_{a}^{c,d},Y_{a,b}^{c,d}\}}\left\{ \left\vert Z-N\right\vert \leq\gamma N\right\} .
\]
It is easy to see that $F_{a,b,c,d}\subseteq E_{a,b,c,d}.$ The probability
that $F_{a,b,c,d}$ holds can be bounded from Lemma \ref{lemT3} (as
soon as $N\geq\frac{64}{\gamma}=1664$), as 
\[
\mathbb{P}_{N}\left(F_{a,b,c,d}\right)\geq1-7e^{4}Ne^{-\frac{N}{32.(260)^{2}}}.
\]
The result follows since there are less than $N^{4}$ ways of choosing
$(a,b,c,d)$. 
\end{proof}
\vspace{0.5cm}
 Computations using the bound of lemma \ref{lembound} show that $N=52.10^{6}$
is enough to have the existence of an appropriate Markov chain. So
one can take $\varepsilon=3.10^{-17}$ in the statement of theorem
\ref{thm3}. We conclude the proof of proposition \ref{pro3} by showing
that event $E$ implies conditions $UI1$ and $UI2.$ 
\begin{lem}
\label{lemT4} If event $E$ holds, then the conditions $UI1,UI2$
are satisfied. 
\end{lem}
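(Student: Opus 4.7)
The plan is to unfold the Markov structure of $\nu$ so that every conditional probability appearing in UI1 and UI2 becomes a ratio of cardinalities of intersections of ``successor'' sets $S_a$ and ``predecessor'' sets $T_a:=\{b:a\in S_b\}$, which, up to a factor $2$, is one of the seven ratios $Y_\cdot^\cdot/Y_\cdot^\cdot$ controlled by $E_{a,b,c,d}$.

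First I would record the conditional factorisations induced by the Markov chain $(c_1,d_1,\dots,c_l,d_l)$. Conditional on $\tilde c=c$, the $\tilde d_j$ are independent, with $\tilde d_j$ uniform on $S_{c_j}\cap T_{c_{j+1}}$ for $j<l$ and $\tilde d_l$ uniform on $S_{c_l}$; symmetrically, conditional on $\tilde d=d$, the $\tilde c_j$ are independent, with $\tilde c_1$ uniform on $T_{d_1}$ and $\tilde c_j$ uniform on $S_{d_{j-1}}\cap T_{d_j}$ for $j\ge 2$. Each probability appearing in UI1 or UI2 extends a niceness event by exactly one link of the chain and therefore adds a single set-membership constraint on a single signal (one of $\tilde d_{m-1},\tilde d_l,\tilde d_m,\tilde c_m,\tilde c_{m+1}$). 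By conditional independence the constraints carried by the prior niceness that concern the other signals cancel between numerator and denominator, so the conditional probability reduces to $|A\cap B|/|A|$, where $A$ is the a priori conditional support of the relevant signal intersected with the surviving previous constraint (one or two $S$'s and possibly a $T$) and $B$ is the single new $S$- or $T$-constraint.

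Then I would carry out the case analysis: one case for UI1's first clause (extension $2l\to 2l+1$), one each for UI1's second clause in the sub-cases $r=2m-2$ and $r=2m-1$, and symmetrically for UI2; each further splits according to whether the neighbouring coordinates ($c_{m-1},c'_{m-1}$ or $c_l,c'_l$, etc.) coincide. For example the first clause of UI1 produces $|S_{c_l}\cap S_{c'_l}\cap T_{c'_{l+1}}|/|S_{c_l}\cap S_{c'_l}|$, equal to $Y^{c_l}_{c'_{l+1}}/(2N)$ when $c_l=c'_l$ (controlled by $|Y^c_a/Y^c-1|\le 2\alpha$, since $Y^{c_l}=N$) and to $Y^{c_l,c'_l}_{c'_{l+1}}/(2Y^{c_l,c'_l})$ when $c_l\ne c'_l$ (controlled by $|Y^{c,d}_a/Y^{c,d}-1|\le 2\alpha$). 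Treating the remaining clauses identically, every conditional probability equals $(\text{ratio})/2$ where the inner ratio is one of the seven expressions $Y_{a,b}/Y_a$, $Y_{a,b}^c/Y_a^c$, $Y_a^{c,d}/Y_a^c$, $Y_{a,b}^{c,d}/Y_a^{c,d}$, $Y^{c,d}/Y^c$, $Y_a^c/Y^c$, $Y_a^{c,d}/Y^{c,d}$ bounded in $[1-2\alpha,1+2\alpha]$ on the event $E$, hence the probability lies in $[1/2-\alpha,1/2+\alpha]$.

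The main obstacle is purely combinatorial bookkeeping: for each of the several sub-cases one must identify exactly which product of $X$-indicators appears in numerator and denominator and match it to the correct named $Y$-quantity, with special attention to the boundary indices $m=1$ and $m=l$ (where a ``neighbour'' does not exist and one of the intersecting sets simplifies, e.g.\ $|S_{c_l}|=N/2$ or $Y^c=N$). No further ideas are needed once the Markov factorisation and the $S$--$T$ to $Y$ dictionary have been set up.
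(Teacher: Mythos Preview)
Your proposal is correct and follows essentially the same route as the paper's proof. Both arguments reduce each conditional probability in UI1 and UI2, via the Markov property, to a ratio of sums of products of the indicators $X_{a,b}$, identify that ratio with $\tfrac12$ times one of the seven $Y$-ratios appearing in the definition of $E_{a,b,c,d}$, and then read off the bound $[1/2-\alpha,1/2+\alpha]$. Your $S_a,T_a$ set notation and your up-front statement of the conditional independence of the $\tilde d_j$ given $\tilde c$ (and of the $\tilde c_j$ given $\tilde d$) make the cancellation of the ``other factors'' slightly more transparent than the paper's step-by-step invocation of the Markov property, but the content is identical; the paper likewise splits UI1 into the clause $2l\!\to\!2l+1$ and the sub-cases $r=2m-1$ (further split into $m=l$ and $m<l$) and $r=2m-2$, and UI2 into $r=2m$ and $r=2m-1$ (further split into $m=1$ and $m>1$), exactly the boundary bookkeeping you flag.
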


\begin{proof}
We fix the law $\nu$ of the Markov chain on $A$ and assume that
it has been induced, as explained at the beginning of section \ref{existence},
by a transition matrix $P$ satisfying $E$. For $l\geq1$, we forget
about the state in $K$ and still denote by $u^{l}$ the marginal
of $u^{l}$ over $C^{l}\times D^{l}$. If $c=(c_{1},...,c_{l})\in C^{l}$
and $d=(d_{1},...,d_{l})\in D^{l}$, we have $u^{l}(c,d)=\nu(c_{1},d_{1},...,c_{l},d_{l})$.

Let us begin with condition UI2 which we recall here: for all $1\leq p\leq l$,
for all ${d}\in D^{l}$, for all $d^{\prime}\in D^{p-1}$, for all
$m\in\{1,...,p-1\}$ such that ${d}_{m}\neq d_{m}^{\prime}$, for
$r=2m-1,2m$, 
\[
u^{l}\left(\tilde{c}\smile_{r+1}d^{\prime}|\tilde{d}={d},\tilde{c}\smile_{r}d^{\prime}\right)\in[1/2-\alpha,1/2+\alpha],\;\;\hspace{3cm}(\ref{eq63})
\]
where $(\tilde{c},\tilde{d})$ is a random variable selected according
to $u^{l}$. The quantity $u^{l}\left(\tilde{c}\smile_{r+1}d^{\prime}|\tilde{d}={d},\tilde{c}\smile_{r}d^{\prime}\right)$
is thus the conditional probability of the event $(\tilde{c}$ and
$d'$ are nice at level $r+1$) given that they are nice at level
$r$ and that the signal received by player 2 is $d$. We divide the
problem into different cases.

\underline{Case $m>1$ and $r=2m-1$. }

Note that the events $\{\tilde{c}\smile_{2m}d'\}$ and $\{\tilde{c}\smile_{2m-1}d'\}$
can be decomposed as follows : 
\begin{eqnarray*}
\{\tilde{c}\smile_{2m-1}d'\} & = & \{\tilde{c}\smile_{2m-2}d'\}\cap\{X_{d'_{m-1},\tilde{c}_{m}}=1\},\\
\{\tilde{c}\smile_{2m}d'\} & = & \{\tilde{c}\smile_{2m-2}d'\}\cap\{X_{d_{m-1}^{\prime},\tilde{c}_{m}}=1\}\cap\{X_{\tilde{c}_{m},d'_{m}}=1\}.
\end{eqnarray*}
So $u^{l}\left(\tilde{c}\smile_{2m}d^{\prime}|\tilde{d}={d},\tilde{c}\smile_{2m-1}d^{\prime}\right)=u^{l}\left(X_{\tilde{c}_{m},d'_{m}}=1|\tilde{d}={d},\tilde{c}\smile_{2m-1}d^{\prime}\right)$,
and the Markov property gives: 
\begin{eqnarray*}
u^{l}\left(\tilde{c}\smile_{2m}d^{\prime}|\tilde{d}={d},\tilde{c}\smile_{2m-1}d^{\prime}\right) & = & u^{l}\left(X_{\tilde{c}_{m},d'_{m}}=1|X_{d'_{m-1},\tilde{c}_{m}}=1,X_{d_{m-1},\tilde{c}_{m}}=1,X_{\tilde{c}_{m},d_{m}}=1\right),\\
 & = & \frac{\sum_{i\in U}X_{i,d_{m}^{\prime}}X_{d'_{m-1},i}X_{d_{m-1},i}X_{i,d_{m}}}{\sum_{i\in U}X_{d'_{m-1},i}X_{d_{m-1},i}X_{i,d_{m}}}.
\end{eqnarray*}
This is equal to $\frac{1}{2}\frac{Y_{d_{m},d_{m}^{\prime}}^{d_{m-1},d_{m-1}^{\prime}}}{Y_{d_{m}}^{d_{m-1},d_{m-1}^{\prime}}}$
if $d'_{m-1}\neq d_{m-1}$, and to $\frac{1}{2}\frac{Y_{d_{m},d_{m}^{\prime}}^{d_{m-1}}}{Y_{d_{m}}^{d_{m-1}}}$
if $d_{m-1}^{\prime}=d_{m-1}$. In both cases, $E$ implies (\ref{eq63}).

\underline{Case $r=2m$.}

We have $u^{l}\left(\tilde{c}\smile_{2m+1}d'|\tilde{d}=d,\tilde{c}\smile_{2m}d^{\prime}\right)=u^{l}\left(X_{d_{m}^{\prime},\tilde{c}_{m+1}}=1|\tilde{d}=d,\tilde{c}\smile_{2m}d^{\prime}\right)$,
and by the Markov property : 
\begin{eqnarray*}
u^{l}\left(\tilde{c}\smile_{2m+1}d'|\tilde{d}=d,\tilde{c}\smile_{2m}d^{\prime}\right) & = & u^{l}\left(X_{d_{m}^{\prime},\tilde{c}_{m+1}}=1|X_{{d_{m}},\tilde{c}_{m+1}}=1,X_{\tilde{c}_{m+1},d_{m+1}}=1\right),\\
 & = & \frac{\sum_{i\in U}X_{d_{m}^{\prime},i}X_{d_{m},i}X_{i,d_{m+1}}}{\sum_{i\in U}X_{d_{m},i}X_{i,d_{m+1}}}\\
 & = & \frac{1}{2}\frac{Y_{d_{m+1}}^{d'_{m},d_{m}}}{Y_{d_{m+1}}^{d_{m}}}\;\in[1/2-\alpha,1/2+\alpha].
\end{eqnarray*}

\underline{Case $m=1$, $r=1$.} 
\begin{eqnarray*}
u^{l}\left(\tilde{c}\smile_{2}d'|\tilde{d}=d,\tilde{c}\smile_{1}d^{\prime}\right) & = & u^{l}\left(\tilde{c}\smile_{2}d'|\tilde{d}=d\right),\\
 & = & u^{l}\left(X_{\tilde{c}_{1},d_{1}^{\prime}}=1|X_{\tilde{c}_{1},d_{1}}=1\right),\\
 & = & \frac{\sum_{i\in U}X_{i,d'_{1}}X_{i,d_{1}}}{\sum_{i\in U}X_{i,d_{1}}}\\
 & = & \frac{1}{2}\frac{Y_{d_{1},d_{1}^{\prime}}}{Y_{d_{1}}}\;\in[1/2-\alpha,1/2+\alpha].
\end{eqnarray*}

Let us now consider condition $UI1$: we require that for all $l\geq1$,
all ${c}=({c}_{1},...,{c}_{l})$ in $C^{l}$ and $c^{\prime}=(c_{1}^{\prime},...,c_{l+1}^{\prime})$
in $C^{l+1}$ such that ${c}_{1}=c_{1}^{\prime}$, we have 
\[
u^{l}\left(c^{\prime}\smile_{2l+1}\tilde{d}\;\big|\;\tilde{c}={c},c^{\prime}\smile_{2l}\tilde{d}\right)\in[1/2-\alpha,1/2+\alpha]\hspace{3cm}(\ref{eq61})
\]
and for all $m\in\{1,...,l\}$ such that ${c}_{m}\neq c_{m}^{\prime}$,
for $r=2m-2,2m-1$, 
\[
u^{l}\left(c^{\prime}\smile_{r+1}\tilde{d}\;\big|\;\tilde{c}=c,c^{\prime}\smile_{r}\tilde{d}\right)\in[1/2-\alpha,1/2+\alpha].\hspace{3cm}(\ref{eq62})
\]

We start with (\ref{eq61}). 
\begin{eqnarray*}
u^{l}\left(c^{\prime}\smile_{2l+1}\tilde{d}|\tilde{c}=c,c^{\prime}\smile_{2l}\tilde{d}\right) & = & u^{l}\left(X_{\tilde{d}_{l},c_{l+1}^{\prime}}=1|\tilde{c}=c,c^{\prime}\smile_{2l}\tilde{d}\right),\\
 & = & u^{l}\left(X_{\tilde{d}_{l},c_{l+1}^{\prime}}=1|X_{c_{l}^ {},\tilde{d}_{l}}=1,X_{c_{l}^{\prime},\tilde{d}_{l}}=1\right),\\
 & = & \frac{\sum_{i\in V}X_{i,c_{l+1}^{\prime}}X_{c_{l},i}X_{c_{l}^{\prime},i}}{\sum_{i\in V}X_{c_{l},i}X_{c_{l}^{\prime},i}}.
\end{eqnarray*}
This is $\frac{1}{2}\frac{Y_{c_{l+1}}^{c_{l},c_{l}^{\prime}}}{Y^{c_{l},c_{l}^{\prime}}}$
if $c_{l}^{\prime}\neq c_{l}$, and $\frac{1}{2}\frac{Y_{c_{l+1}}^{c_{l}}}{Y^{c_{l}}}$
if $c_{l}^{\prime}=c_{l}$. In both cases, (\ref{eq61}) holds.

We finally consider (\ref{eq62}) and distinguish several case.

\underline{Case $r=2m-1$ and $m=l$.}

\begin{eqnarray*}
u^{l}\left(c^{\prime}\smile_{2l}\tilde{d}|\tilde{c}=c,c^{\prime}\smile_{2l-1}\tilde{d}\right) & = & u^{l}\left(X_{c_{l}^{\prime},\tilde{d}_{l}}=1|\tilde{c}=c,c^{\prime}\smile_{2l-1}\tilde{d}\right),\\
 & = & u^{l}\left(X_{c_{l}^{\prime},\tilde{d}_{l}}=1|X_{c_{l}^ {},\tilde{d}_{l}}=1\right),\\
 & = & \frac{\sum_{i\in V}X_{c'_{l},i}X_{c_{l},i}}{\sum_{i\in V}X_{c_{l},i}},\\
 & = & \frac{1}{2}\frac{Y^{c'_{l},c_{l}^ {}}}{Y^{c_{l}}}\;\in[1/2-\alpha,1/2+\alpha].
\end{eqnarray*}

\underline{Case $r=2m-1$ and $m<l$.}

\begin{eqnarray*}
u^{l}\left(c^{\prime}\smile_{2m}\tilde{d}|\tilde{c}=c,c^{\prime}\smile_{2m-1}\tilde{d}\right) & = & u^{l}\left(X_{c_{m}^{\prime},\tilde{d}_{m}}=1|\tilde{c}=c,c^{\prime}\smile_{2m-1}\tilde{d}\right),\\
 & = & u^{l}\left(X_{c_{m}^{\prime},\tilde{d}_{m}}=1|X_{c_{m},\tilde{d}_{m}}=1,X_{\tilde{d}_{m},c_{m+1}}=1\right),\\
 & = & \frac{\sum_{i\in V}X_{c'_{m},i}X_{c_{m},i}X_{i,c_{m+1}}}{\sum_{i\in V}X_{c_{m},i}X_{i,c_{m+1}}},\\
 & = & \frac{1}{2}\frac{Y_{c_{m+1}}^{c'_{m},c_{m}}}{Y_{c_{m+1}}^{c_{m}}}\;\in[1/2-\alpha,1/2+\alpha].
\end{eqnarray*}

\underline{Case $r=2m-2$.}

\begin{eqnarray*}
u^{l}\left(c^{\prime}\smile_{2m-1}\tilde{d}|\tilde{c}=c,c^{\prime}\smile_{2m-2}\tilde{d}\right) & = & u^{l}\left(X_{\tilde{d}_{m-1},c_{m}^{\prime}}=1|\tilde{c}=c,c^{\prime}\smile_{2m-1}\tilde{d}\right),\\
 & = & u^{l}\left(X_{\tilde{d}_{m-1},c_{m}^{\prime}}=1|X_{c'_{m-1},\tilde{d}_{m-1}}=X_{c_{m-1},\tilde{d}_{m-1}}=X_{\tilde{d}_{m-1},c_{m}}=1\right),\\
 & = & \frac{\sum_{i\in V}X_{i,c'_{m}}X_{i,c_{m}}X_{c'_{m-1},i}X_{c_{m-1},i}}{\sum_{i\in V}X_{i,c_{m}}X_{c'_{m-1},i}X_{c_{m-1},i}}.
\end{eqnarray*}
This is $\frac{1}{2}\frac{Y_{c'_{m},c_{m}}^{c'_{m-1},c_{m-1}}}{Y_{c_{m}}^{c'_{m-1}c_{m-1}}}$
if $c_{m-1}\neq c'_{m-1}$, and $\frac{1}{2}\frac{Y_{c'_{m},c_{m}}^{c_{m-1}}}{Y_{c_{m}}^{c_{m-1}}}$
if $c_{m-1}=c'_{m-1}$. In both cases, it belongs to $[1/2-\alpha,1/2+\alpha]$,
concluding the proofs of lemma \ref{lemT4}, proposition \ref{pro3}
and theorem \ref{thm3}. 
\end{proof}

\section{Proofs of Section \ref{sec:Relation-to-other}}

\subsection{Proof of Theorem \ref{thm:NZS distance}}

Suppose that $u$ and $v$ are two simple, and non-redundant information
structures. Let $\tilde{u}$ and $\tilde{v}$ be the associated probability
distributions over belief hierarchies of player 1. It is easy to show
that if two non-redundant information structures induce the same distributions
over hierarchies of beliefs $\tilde{u}=\tilde{v}$, then they are
equivalent from any strategic point of view, and, in particular, they
induce the same set of ex ante BNE payoffs. Hence, we assume that
$\tilde{u}\neq\tilde{v}$.

Let $H_{u}=\text{supp}\tilde{u}$ and $H_{v}=\text{supp}\tilde{v}$.
Lemma III.2.7 in \citet{mertens_sorin_zamir_2015} implies that the
sets $H_{u}$ and $H_{v}$ are disjoint.

It is well known that there exists a non-zero sum payoff function
$g^{\left(0\right)}:K\times\left(I\times I_{0}\right)\times J\rightarrow\left[-1,1\right]^{2}$
such that $I_{0}=H_{u}\cup H_{v}$ and such that the set of rationalizable
actions for player 1 of type $c\in C$ with hierarchy $h\left(c\right)$
is contained in the set $I\times\left\{ h\left(c\right)\right\} $.
In particular, in a Bayesian Nash equilibrium, each type of player
1 will report its hierarchy. Construct game $g^{\left(1\right)}:K\times\left(I\times I_{0}\right)\times\left(J\times\left\{ u,v\right\} \right)\rightarrow\left[-1,1\right]^{2}$
with payoffs 
\begin{align*}
g_{1}^{\left(1\right)}\left(k,i,i_{0},j,j_{0}\right) & =g_{1}^{\left(0\right)}\left(k,i,i_{0},j\right),\\
g_{2}^{\left(1\right)}\left(k,i,i_{0},j,j_{0}\right) & =\frac{1}{2}g_{2}^{\left(0\right)}\left(k,i,i_{0},j\right)+\begin{cases}
\frac{1}{2}, & \text{if }j_{0}=u\text{ and }i_{0}\in H_{u}\\
-\frac{1}{2}, & \text{if }j_{0}=u\text{ and }i_{0}\notin H_{u},\\
0, & \text{if }j_{0}=v.
\end{cases}
\end{align*}
Then, the rationalizable actions of player 2 in game $g^{\left(1\right)}$
are contained in $J\times\left\{ u\right\} $ for any type in type
space $u$ and in $J\times\left\{ v\right\} $ for any type in type
space $v$.

Finally, for any $\varepsilon\in\left(0,1\right),$construct a game
$g^{\varepsilon}:K\times\left(I\times I_{0}\right)\times\left(J\times\left\{ u,v\right\} \right)\rightarrow\left[-1,1\right]^{2}$
with payoffs 
\begin{align*}
g_{1}^{\varepsilon}\left(k,i,i_{0},j,j_{0}\right) & =\varepsilon g_{1}^{\left(0\right)}\left(k,i,i_{0},j,j_{0}\right)+\left(1-\varepsilon\right)\begin{cases}
1, & \text{if }j_{0}=u,\\
-1, & \text{if }j_{0}=v,
\end{cases},\\
g_{2}^{\varepsilon} & \equiv g_{2}^{\left(1\right)}.
\end{align*}
Then, the Bayesian Nash equilibrium payoff of player belongs to $\left[1-\varepsilon,1\right]$
on the structure $u$ and $\left[-1,-1+\varepsilon\right]$ on the
structure $v$. It follows that the payoff distance between the two
type spaces is at least $2-2\varepsilon$, for arbitrary $\varepsilon>0$.

Next, suppose that $u$ and $v$ are two simple information structures
with the decomposition $u=\sum_{\alpha}p_{\alpha}u_{\alpha}$ and
$v=\sum_{\alpha}q_{\alpha}v_{\alpha}$ and such that $\tilde{u}_{\alpha}=\tilde{v}_{\alpha}$
for each $\alpha$. Let $g$ be a non-zero sum payoff function. Let
$\sigma_{\alpha}$ be an equilibrium on $u_{\alpha}$ with payoffs
$g_{\alpha}\in\R^{2}$. Let $s_{\alpha}$ is the associated equilibrium
on $v_{\alpha}$ (that can be obtained by mapping the hierarchies
of beliefs through an appropriate bijection) with the same payoffs
$g_{\alpha}$. The distance between payoffs is bounded my 
\[
\left|\sum p_{\alpha}g_{a}-q_{\alpha}g_{\alpha}\right|_{\max}=\left|\sum\left(p_{\alpha}-q_{\alpha}\right)g_{\alpha}\right|_{\max}\leq\sum\left|p_{\alpha}-q_{\alpha}\right|,
\]
where the last inequality comes from the fact that payoffs are bounded.

On the other hand, let $A=\left\{ \alpha:p_{\alpha}>q_{\alpha}\right\} $.
Using a similar construction as above, we can construct a game $g^{\left(1\right)}$
such that player 2's actions have a form $J\times\left\{ u_{A},u_{B}\right\} $,
and his rationalizable actions are contained in set $J\times\left\{ u_{A}\right\} $
for any type in type space $u_{\alpha},\alpha\in A$ and in $J\times\left\{ u_{B}\right\} $
otherwise. Further, we construct a game $g^{\left(\varepsilon\right)}$
as above. Then, any player $1$'s equilibrium $g_{1,\alpha}^{(\varepsilon)}$
payoff is at least $1-\varepsilon$ for any type in type space $u_{\alpha},\alpha\in A$,
and $-1+\varepsilon$ for any type in type space $u_{\alpha}$ for
$\alpha\notin A$. Denoting the equilibrium payoff of player 2 as
$g_{2,\varepsilon}^{\varepsilon}$, the payoff distance in game $g^{\varepsilon}$
is at least 
\begin{align*}
 & \max\left(\left|\sum_{\alpha}\left(p_{\alpha}-q_{\alpha}\right)g_{1,\alpha}\right|,\left|\sum_{\alpha}\left(p_{\alpha}-q_{\alpha}\right)g_{2,\alpha}\right|\right)\\
\geq & \left|\sum_{\alpha}\left(p_{\alpha}-q_{\alpha}\right)g_{1,\alpha}\right|\\
\geq & \left[\sum_{\alpha\in A}\left(p_{\alpha}-q_{\alpha}\right)-\sum_{\alpha\notin A}\left(p_{\alpha}-q_{\alpha}\right)\right]\left(1-\varepsilon\right)\\
\geq & \left(1-\varepsilon\right)\sum\left|p_{\alpha}-q_{\alpha}\right|.
\end{align*}
Because the $\varepsilon>0$ is arbitrary, the two above inequalities
show that the payoff distance is equal to $\sum\left|p_{\alpha}-q_{\alpha}\right|$.

 \bibliographystyle{apa}
\bibliography{zerosum2}

\end{document}